\newtheorem{theorem}{Theorem}
\theoremstyle{plain}
\newtheorem{corollary}[theorem]{Corollary}
\newtheorem{definition}[theorem]{Definition}
\newtheorem{lemma}[theorem]{Lemma}
\newtheorem{problem}[theorem]{Problem}
\newtheorem{proposition}[theorem]{Proposition}
\newtheorem{remark}[theorem]{Remark}
\numberwithin{equation}{section}
\begin{document}
\title[Sums of squares III]{Sums of squares III: hypoellipticity in the
infinitely degenerate regime}
\author{Lyudmila Korobenko}
\address{Reed College, Portland, Oregon, USA, korobenko@reed.edu}
\author{Eric Sawyer}
\address{McMaster University, Hamilton, Ontario, Canada, sawyer@mcmaster.ca}
\thanks{The second author is partially supported by NSERC grant number 12409
and the McKay Research Chair grant at McMaster University}
\subjclass[2020]{35B65, 35J70, 35H10.}
\keywords{infinite degeneracy, elliptic operators, hypoellipticity, sums of
squares, vector fields.}

\begin{abstract}
This is the third paper in a series of three dealing with sums of squares
and hypoellipticity in the infinitely degenerate regime. We establish a $%
C^{2,\delta }$ generalization of M. Christ's smooth sum of squares theorem,
and then use a bootstrap argument with the sum of squares decomposition for
matrix functions, obtained in our second paper of this series, to prove a
hypoellipticity theorem that generalizes some cases of the results of
Christ, Hoshiro, Koike, Kusuoka and Stroock and Morimoto for sums of
squares, and of Fed\u{\i}i and Kohn for degeneracies not necessarily a sum
of squares.
\end{abstract}

\maketitle
\tableofcontents

\section{Introduction}

The regularity theory of second order \emph{subelliptic} linear equations
with smooth coefficients is well established, see e.g. H\"{o}rmander \cite%
{Ho} and Fefferman and Phong \cite{FePh}. In \cite{Ho}, H\"{o}rmander
obtained hypoellipticity of sums of squares of smooth vector fields plus a
lower order term, whose Lie algebra spans at every point. In \cite{FePh},
Fefferman and Phong considered general nonnegative semidefinite smooth
self-adjoint linear operators, and characterized subellipticity in terms of
a containment condition involving Euclidean balls and \textquotedblright
subunit\textquotedblright\ balls related to the geometry of the nonnegative
semidefinite form associated to the operator. Of course subelliptic
operators $L$ with smooth coefficients are hypoelliptic, namely every
distribution solution $u$ of $Lu=\phi $ is smooth when $\phi \,$is smooth.
In the converse direction, H\"{o}rmander also showed in \cite{Ho} that a sum
of squares of smooth vector fields in $\mathbb{R}^{n}$, with constant rank
Lie algebras, is hypoelliptic if and only if the rank is $n$. See Tr\`{e}ves 
\cite{Tre} for a treatment of further results on characterizing
hypoellipticity in certain special cases.

However, the question of hypoellipticity in general remains largely a
mystery. A possible form for a characterization involving the effective
symbol $\widetilde{\sigma }\left( x,\xi \right) $ (when it exists) is given
by Christ in \cite{Chr2}, motivated by his main hypoellipticity theorem for
sums of squares in the infinitely degenerate regime in \cite[see Main
Theorem 2.3]{Chr}. We will generalize this latter theorem of Christ to hold
for $C^{2,\delta }$ symbols, which will play a major role in Theorem \ref%
{HYP G} below on hypoellipticity in the infinitely degenerate regime.

Thus a basic obstacle to understanding hypoellipticity in general arises
when ellipticity degenerates to infinite order in some directions, and we
briefly review what is known in this infinite regime here. The theory has
only had its surface scratched so far, as evidenced by the results of Fedii 
\cite{Fe}, Kusuoka and Strook \cite{KuStr}, Kohn \cite{Koh}, Koike \cite{Koi}%
, Korobenko and Rios \cite{KoRi}, Morimoto \cite{Mor}, Akhunov, Korobenko
and Rios \cite{AkKoRi}, and the aforementioned paper of Christ \cite{Chr},
to name just a few. In the \emph{rough} infinitely differentiable regime,
Rios, Sawyer and Wheeden \cite{RiSaWh} had earlier obtained results
analogous to those in \cite{KoRi}, where $L$ is `rough' hypoelliptic if
every \emph{weak} solution $u$ of $Lu=\phi $ is continuous when $\phi \,$is
bounded.

In \cite{Fe}, Fedii proved that the two-dimensional operator $\frac{\partial 
}{\partial x^{2}}+f\left( x\right) ^{2}\frac{\partial }{\partial y^{2}}$ is
hypoelliptic merely under the assumption that $f$ is smooth and positive
away from $x=0$. In \cite{KuStr}, Kusuoka and Strook showed using
probabilistic methods that under the same conditions on $f\left( x\right) $,
the three-dimensional analogue $\frac{\partial ^{2}}{\partial x^{2}}+\frac{%
\partial ^{2}}{\partial y^{2}}+f\left( x\right) ^{2}\frac{\partial ^{2}}{%
\partial z^{2}}$ of Fedii's operator is hypoelliptic \emph{if and only if} 
\begin{equation*}
\lim_{x\rightarrow 0}x\ln f\left( x\right) =0.
\end{equation*}%
Morimoto \cite{Mor} and Koike \cite{Koi} introduced the use of
nonprobabilistic methods, and further refinements of this approach were
obtained in Christ \cite{Chr}, using a general theorem on hypoellipticity of
sums of squares of smooth vector fields in the infinite regime, i.e. where\
the Lie algebra does \emph{not} span at all points. In particular, for the
operator $L_{3}=\frac{\partial ^{2}}{\partial x^{2}}+a^{2}(x)\frac{\partial
^{2}}{\partial y^{2}}+b^{2}(x)\frac{\partial ^{2}}{\partial z^{2}}\ $in$\ 
\mathbb{R}^{3}$, Christ proved that if $a,b\in C^{\infty }$ are even,
elliptic, nondecreasing on $[0,\infty )$, and $a(x)\geq b(x)$ for all $x$,
and if in addition $\limsup_{x\rightarrow 0}|x\ln a(x)|\neq 0$, and the
coefficient $b$ satisfies 
\begin{equation*}
\lim_{x\rightarrow 0}b(x)x|\ln a(x)|=0,
\end{equation*}%
then $L_{3}$ is hypoelliptic. Moreover, he showed that if some partial
derivative of $b$ is nonzero at $x=0$, then $L_{3}$ is hypoelliptic \emph{if
and only if} the above condition holds.

On the other hand, the novelty in Kohn \cite{Koh}, which was generalized in 
\cite{KoRi}, was the absence of any assumption regarding sums of squares of
vector fields. This is relevant since it is an open problem whether or not
there are smooth nonnegative functions $\lambda $ on the real line vanishing
only at the origin, and to infinite order there, such that they \textbf{%
cannot} be written as a finite sum $\lambda =\sum_{n=1}^{N}f_{n}^{2}$ of
squares of smooth functions $f_{n}$. The existence of such examples are
attributed to\ Paul Cohen in both \cite{Bru} and \cite{BoCoRo}, but
apparently no example has ever appeared in the literature, and the existence
of such an example is an open problem, see \cite[Remark 5.1]{Pie}\footnote{%
See also https:/mathoverflow.net/a/106072}. This extends moreover to
matrices since if a matrix is a sum of squares (equivalently a sum of\
positive rank one matrices), then each of its diagonal elements is as well.
On the other hand, Kohn makes the additional assumption that $\lambda \left(
x\right) $ vanishes only at the origin in $\mathbb{R}^{m}$, something not
necessarily assumed in the other aforementioned works. More importantly,
Kohn's theorem applies only to operators of Grushin type $L\left( x,D\right)
+\lambda \left( x\right) L\left( y,D\right) $, where the degeneracy $\lambda
\left( x\right) $ factors out of the operator $\lambda \left( x\right)
L\left( y,D\right) $, a restriction that this paper will in part work to
remove.

Missing then is a treatment of more general smooth operators $L=\nabla
A\left( x\right) \nabla +\func{lower}\func{order}\func{terms}$, whose matrix 
$A\left( x\right) $ is \emph{comparable} to an operator in diagonal form of
the types considered above - see Definition \ref{comp mat} below. Our
purpose in this paper is to address this more general case in the following
setting of real-valued differential operators. Suppose $1\leq m<p\leq n$.
Let $L=\nabla A\left( x\right) \nabla $ where $A\left( x\right) \sim D_{%
\mathbf{\lambda }}\left( \tilde{x}\right) $ with $\tilde{x}=\left(
x_{1},...,x_{m}\right) $, $x=\left( x_{1},...,x_{n}\right) $ and where $D_{%
\mathbf{\lambda }}\left( \tilde{x}\right) $ has $C^{2}$ nonnegative diagonal
entries $\lambda _{1}\left( \tilde{x}\right) ,...,\lambda _{n}\left( \tilde{x%
}\right) $ depending only on $\tilde{x}$ and positive away from the origin
in $\mathbb{R}^{m}$:%
\begin{equation*}
A\left( x\right) \sim D_{\mathbf{\lambda }}\left( \tilde{x}\right) =\left[ 
\begin{array}{ccc}
\mathbb{I}_{m} & \mathbf{0}_{m\times \left( p-m-1\right) } & \mathbf{0}%
_{m\times \left( n-p+1\right) } \\ 
\mathbf{0}_{\left( p-m-1\right) \times m} & D_{\left\{ \lambda _{m+1}\left( 
\tilde{x}\right) ,...,\lambda _{p-1}\left( \tilde{x}\right) \right\} } & 
\mathbf{0}_{\left( p-m-1\right) \times \left( n-p+1\right) } \\ 
\mathbf{0}_{\left( n-p+1\right) \times m} & \mathbf{0}_{\left( n-p+1\right)
\times \left( p-m-1\right) } & \lambda _{p}\left( \tilde{x}\right) \mathbb{I}%
_{n-p+1}%
\end{array}%
\right] .
\end{equation*}%
We will refer to a diagonal matrix having this form for any $m<p\leq n$ as a 
\emph{Grushin matrix function of type }$m$. Note that the comparability $%
A\left( x\right) \sim D_{\mathbf{\lambda }}\left( \tilde{x}\right) $ impies
that $a_{k,k}\left( x\right) \approx \lambda _{k}\left( \tilde{x}\right) $
for all the diagonal entries, so that $\lambda _{k}\left( \tilde{x}\right)
\approx a_{k,k}\left( \tilde{x},0\right) $ may be assumed smooth without
loss of generality. Moreover $A\left( x\right) \sim A_{\func{diag}}\left( 
\tilde{x},0\right) $ (see \cite[after Definition 10]{KoSa2}).

All of our theorems will apply to operators $L$ having a Grushin matrix
function $A\left( x\right) $ of type $m$ that is also \emph{elliptical} in
the sense that $A\left( x\right) $ is positive definite for $x\neq 0$.
Moreover, we will require in addition that the intermediate diagonal entries 
$\left\{ a_{k,k}\left( \tilde{x}\right) \right\} _{k=m+1}^{p-1}$ (there
won't be any such entries in the case $p=m+1$) are smooth and \emph{strongly}
$C^{4,2\delta }$ (see \cite{KoSa1}) for some $\delta >0$ (we show in \cite%
{KoSa2} that such functions can be written as a sum of squares of $%
C^{2,\delta }$ functions, and moreover give a sharp $\omega $-monotonicity
criterion for strongly $C^{4,2\delta }$), and that the off diagonal entries
of $A\left( x\right) $ satisfy certain strongly subordinate inequalities
(which are shown to be sharp in a certain case, see \cite[Theorem 42]{KoSa2}%
). We emphasize that no additional assumptions are made on the last $n-p+1$
entries of $D\left( \tilde{x}\right) $, which are all equal to $\lambda
_{p}\left( \tilde{x}\right) $.

Our approach is broadly divided into four separate steps, the first and
second of which are the subject of the first two papers in this series:

\begin{enumerate}
\item First, a proof that a $C^{3,1}$ function can be written as a finite
sum of squares of $C^{1,1}$ functions first appeared in Guan \cite{Gua}, who
attributed the result to Fefferman. In \cite{KoSa1} we adapted treatments of
this result from Tataru \cite{Tat} and Bony \cite{Bon} to establish
conditions under which a $C^{4,2\delta }$ nonnegative function can be
written as a finite sum of squares of $C^{2,\delta }$ functions for some $%
\delta >0$. The methods of Tataru and Bony were in turn modelled on a
localized splitting of a nonnegative symbol $a$, due to Fefferman and Phong 
\cite{FePh}, who used it to establish a strong form of G\aa rding's
inequality, and is the main idea behind the result of Fefferman appearing in 
\cite{Gua}. That splitting used the implicit function theorem to write a
nonnegative symbol $a$ as a sum of squares plus a symbol depending on fewer
variables, so that induction could be applied. This same scheme was used in 
\cite{KoSa1} to obtain a sum of squares of $C^{2,\delta }$ functions, but
taking care to arrange assumptions so that the implicit function theorem
applied.

\item Second, in \cite{KoSa2}, we showed that under analogous conditions on
the diagonal entries of a matrix-valued function $M$, and strong
subordinate-type inequalities on the off diagonal entries, $M$ can then be
written as a finite sum of squares of $C^{2,\delta }$ vector fields for some 
$\delta >0$.

\item Third, we here extend a theorem of M. Christ on hypoellipticity of
sums of smooth squares of vector fields to the setting of $C^{2,\delta }$
vector fields, with the appropriate notion of gain in a range of Sobolev
spaces.

\item Fourth, we here adapt arguments of M.\ Christ together with the above
steps to obtain hypoellipticity of linear operators $L$ of the form%
\begin{equation}
L=\nabla ^{\func{tr}}A\left( x\right) \nabla +D\left( x\right) ,
\label{L form}
\end{equation}%
where the matrix $A$ and scalar $D$ are smooth functions of $x\in \mathbb{R}%
^{n}$, and with $\tilde{x}=\left( x_{1},...,x_{m}\right) $, we have%
\begin{equation}
A\left( x\right) \sim \left[ 
\begin{array}{cc}
\mathbb{I}_{m} & 0 \\ 
0 & D_{\mathbf{\lambda }}\left( \tilde{x}\right)%
\end{array}%
\right] ,  \label{A comp}
\end{equation}%
where $\mathbb{I}_{m}$ is the $m\times m$ identity matrix, and $D_{\mathbf{\
\lambda }}\left( \tilde{x}\right) $ is the $\left( n-m\right) \times \left(
n-m\right) $ diagonal matrix with the components of $\mathbf{\lambda }\left( 
\tilde{x}\right) =\left( \lambda _{m+1}\left( \tilde{x}\right) ,...,\lambda
_{n}\left( \tilde{x}\right) \right) $ along the diagonal. The component
functions $\lambda _{\ell }\left( \tilde{x}\right) $ satisfy certain natural
conditions described explicitly below.
\end{enumerate}

We will end this section by stating our main results on hypoellipticity.
Then in the next section, we use a result on calculus of rough symbols from
the 1980's \cite{Saw} to derive a rough version of M. Christ's
hypoellipticity theorem for sums of smooth vector fields in the infinitely
degenerate regime, where symbol splitting is inadequate. Finally in the last
sections, we use a bootstrap argument that exploits the $C^{2,\delta }$
regularity of the vector fields, to bring all of these results to bear on
proving hypoellipticity for linear partial differential operators $L$ of the
form (\ref{L form}).

But first we recall the main results from the second paper in this series 
\cite{KoSa2} on sums of squares of matrix functions that we will use here.

\begin{definition}
\label{comp mat}Let $A$ and $B$ be real symmetric positive semidefinite $%
n\times n$ matrices. We define $A\preccurlyeq B$ if $B-A$ is positive
semidefinite. Let $\beta <\alpha $ be positive constants. A real symmetric
positive semidefinite $n\times n$ matrix $A$ is said to be $\left( \beta
,\alpha \right) $-\emph{comparable} to a symmetric $n\times n$ matrix $B$,
written $A\sim _{\beta ,\alpha }B$, if $\beta B\preccurlyeq A\preccurlyeq
\alpha \ B$, i.e.%
\begin{equation}
\beta \ \xi ^{\limfunc{tr}}B\xi \leq \xi ^{\limfunc{tr}}A\xi \leq \alpha \
\xi ^{\limfunc{tr}}B\xi ,\ \ \ \ \ \text{for all }\xi \in \mathbb{R}^{n}.
\label{pos def}
\end{equation}%
We say $A$ is \emph{comparable} to $B$, written $A\sim B$, \ if $A\sim
_{\beta ,\alpha }B$ for some $0<\beta <\alpha <\infty $.
\end{definition}

Note that if $A$ is \emph{comparable} to $B$, then both $A$ and $B$ are
positive semidefinite. Indeed, both $0\leq \left( \alpha -\beta \right) \xi
^{\limfunc{tr}}B\xi $ and $0\leq \left( \frac{1}{\beta }-\frac{1}{\alpha }%
\right) \xi ^{\limfunc{tr}}A\xi $ hold for all $\xi \in \mathbb{R}^{n}$.

\begin{definition}
A matrix function $\mathbf{A}\left( x\right) $ is \emph{subordinate} if $%
\left\vert \frac{\partial \mathbf{A}}{\partial x_{k}}\left( x\right) \cdot
\xi \right\vert ^{2}\leq C\xi ^{\func{tr}}A\left( x\right) \xi $ for all $%
\xi \in \mathbb{R}^{n}$, equivalently $\frac{\partial \mathbf{A}}{\partial
x_{k}}\left( x\right) ^{\func{tr}}\frac{\partial \mathbf{A}}{\partial x_{k}}%
\left( x\right) \preccurlyeq C\mathbf{A}\left( x\right) $.
\end{definition}

Finally recall the following seminorm from \cite{Bon}, 
\begin{equation}
\left[ h\right] _{\alpha ,\delta }\left( x\right) \equiv
\limsup_{y,z\rightarrow x}\frac{\left\vert D^{\alpha }h\left( y\right)
-D^{\alpha }h\left( z\right) \right\vert }{\left\vert y-z\right\vert
^{\delta }}.  \label{def mod D}
\end{equation}%
Here is the sum of squares decomposition with a quasiformal block of order $%
\left( n-p+1\right) \times \left( n-p+1\right) $, where $1<p\leq n$. We say
that a symmetric matrix function $\mathbf{Q}_{p}\left( x\right) $ is
quasiconformal if the eigenvalues $\lambda _{i}\left( x\right) $ of $\mathbf{%
Q}_{p}\left( x\right) $ are nonnegative and comparable.

\begin{theorem}
\label{final n Grushin}Let 
\begin{equation*}
1<p\leq n,\ \ \ \frac{1}{4}\leq \varepsilon <1,\ \ \ 0<\delta <\delta
^{\prime \prime }<1,\text{\ \ \ }M\geq 1,
\end{equation*}%
and%
\begin{equation*}
\delta ^{\prime }=\frac{2\delta \left( 1+\delta \right) }{2+\delta }.
\end{equation*}%
Suppose that $\mathbf{A}\left( x\right) $ is a $C^{4,2\delta }$ symmetric $%
n\times n$ matrix function of a variable $x\in \mathbb{R}^{M}$, which is
comparable to a diagonal matrix function $\mathbf{D}\left( x\right) $, hence
comparable to its associated diagonal matrix function $\mathbf{A}_{\func{diag%
}}\left( x\right) $.

\begin{enumerate}
\item Moreover, assume $a_{p,p}\left( x\right) \approx a_{p+1,p+1}\left(
x\right) \approx ...\approx a_{n,n}\left( x\right) $ and that the diagonal
entries $a_{1,1}\left( x\right) ,...,a_{p-1,p-1}\left( x\right) $ satisfy
the following differential estimates up to fourth order,%
\begin{eqnarray}
\left\vert D^{\mu }a_{k,k}\left( x\right) \right\vert &\lesssim
&a_{k,k}\left( x\right) ^{\left[ 1-\left\vert \mu \right\vert \varepsilon %
\right] _{+}+\delta ^{\prime }},\ \ \ \ \ \text{ }1\leq \left\vert \mu
\right\vert \leq 4\text{ and }1\leq k\leq p-1,  \label{diag hyp} \\
\left[ a_{k,k}\right] _{\mu ,2\delta }\left( x\right) &\lesssim &1,\ \ \ \ \ 
\text{ }\left\vert \mu \right\vert =4\text{ and }1\leq k\leq p-1.  \notag
\end{eqnarray}

\item Furthermore, assume the off diagonal entries $a_{k,j}\left( x\right) $
satisfy the following differential estimates up to fourth order, 
\begin{eqnarray}
\left\vert D^{\mu }a_{k,j}\right\vert &\lesssim &\left( \min_{1\leq s\leq
j}a_{s,s}\right) ^{\left[ \frac{1}{2}+\left( 2-\left\vert \mu \right\vert
\right) \varepsilon \right] _{+}+\delta ^{\prime \prime }},\ \ \ \ \ 0\leq
\left\vert \mu \right\vert \leq 4\text{ and }1\leq k<j\leq p-1,
\label{off diag hyp} \\
\left[ a_{k,j}\right] _{\mu ,2\delta } &\lesssim &1,\ \ \ \ \ \left\vert \mu
\right\vert =4\text{ and }1\leq k<j\leq p-1,  \notag \\
\left\vert D^{\mu }a_{k,j}\right\vert &\lesssim &\left( \min_{1\leq s\leq
k}a_{s,s}\right) ^{\left[ \frac{1}{2}+\left( 2-\left\vert \mu \right\vert
\right) \varepsilon \right] _{+}+\delta ^{\prime \prime }},\ \ \ \ \ 0\leq
\left\vert \mu \right\vert \leq 4\text{ and }1\leq k\leq p-1<j\leq n  \notag
\\
\left[ a_{k,j}\right] _{\mu ,2\delta } &\lesssim &1,\ \ \ \ \ \left\vert \mu
\right\vert =4\text{ and }1\leq k\leq p-1<j\leq n.  \notag
\end{eqnarray}

\item Then there is a positive integer $I\in \mathbb{N}$ such that the
matrix function $\mathbf{A}$ can be written as a finite sum of squares of $%
C^{2,\delta }$ vectors $X_{k,j}$, plus a matrix function $\mathbf{A}_{p}$, 
\begin{equation*}
\mathbf{A}\left( x\right) =\sum_{k=1}^{p-1}\sum_{i=1}^{I}X_{k,j}\left(
x\right) X_{k,j}\left( x\right) ^{\func{tr}}+\mathbf{A}_{p}\left( x\right)
,\ \ \ \ \ x\in \mathbb{R}^{M},
\end{equation*}%
where the vectors $X_{k,i}\left( x\right) ,\ 1\leq k\leq p-1,\ 1\leq i\leq I$
are $C^{2,\delta }\left( \mathbb{R}^{M}\right) $, $\mathbf{A}_{p}\left(
x\right) =\left[ 
\begin{array}{cc}
\mathbf{0} & \mathbf{0} \\ 
\mathbf{0} & \mathbf{Q}_{p}\left( x\right)%
\end{array}%
\right] $, and $\mathbf{Q}_{p}\left( x\right) \in C^{4,2\delta }\left( 
\mathbb{R}^{M}\right) $ is \emph{quasiconformal}. Moreover, $Z_{k}\equiv
\sum_{i=1}^{I}X_{k,i}X_{k,i}^{\func{tr}}\in C^{4,2\delta }\left( \mathbb{R}%
^{M}\right) $ and 
\begin{eqnarray}
ca_{k,k}\mathbf{e}_{k}\otimes \mathbf{e}_{k} &\prec &Z_{k}Z_{k}^{\func{tr}%
}+\sum_{m=k+1}^{n}a_{m,m}\mathbf{e}_{m}\otimes \mathbf{e}_{m}\prec
C\sum_{m=k}^{n}a_{m,m}\mathbf{e}_{m}\otimes \mathbf{e}_{m},\ \ \ \ \ 1\leq
k\leq p-1,  \label{more} \\
\mathbf{Q}_{p}\left( x\right) &\sim &a_{p,p}\left( x\right) \mathbb{I}%
_{n-p+1}\ .  \notag
\end{eqnarray}%
Finally, if in addition $\mathbf{A}\left( x\right) $ is subordinate, then $%
\mathbf{Q}_{p}\left( x\right) $ is also subordinate\footnote{%
A more general assumption is that of \emph{semisubordinaticity}, namely $%
\frac{\partial \mathbf{A}}{\partial x_{k}}=\mathbf{S}_{k}^{1}+\left( \mathbf{%
S}_{k}^{2}\right) ^{\func{tr}}$ where $\left[ \mathbf{S}_{k}^{j}\right] ^{%
\func{tr}}\mathbf{S}_{k}^{j}\preccurlyeq C\mathbf{A}$ for $j=1,2$ and $%
k=1,2,...,n$, whose importance arises from the fact that semisubordinaticity
of $\mathbf{Q}_{p}$ can be used in place of subordinaticity of $\mathbf{Q}%
_{p}$ in the proof of Theorem \ref{HYP G} below. However, the
semisubordinate condition is much harder to pass through the $1$-SD in \cite%
{KoSa2} than is the subordinate condition, and it is ultimately as difficult
to deal with as the sum of squares decomposition itself.}.
\end{enumerate}
\end{theorem}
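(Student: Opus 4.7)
The plan is induction on $k=1,\dots,p-1$, stripping off one diagonal row and column of $\mathbf{A}$ at each stage. At step $k$ I start from a residual matrix $\mathbf{A}^{(k-1)}$ (with $\mathbf{A}^{(0)}=\mathbf{A}$) whose first $k-1$ rows and columns have already been zeroed out, and whose active $(n-k+1)\times(n-k+1)$ block still obeys the hypotheses listed in items (1) and (2). I then apply the scalar $C^{4,2\delta}\to C^{2,\delta}$ sum of squares theorem of \cite{KoSa1} to the leading diagonal entry, writing $a_{k,k}(x)=\sum_{i=1}^{I}g_{k,i}(x)^{2}$ with $g_{k,i}\in C^{2,\delta}$; the hypotheses (\ref{diag hyp}) on $a_{k,k}$ are exactly those needed to invoke that theorem.

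Next I construct vector fields of the form $X_{k,i}=g_{k,i}\mathbf{e}_{k}+\sum_{j>k}g_{k,i}\,h_{k,j}\,\mathbf{e}_{j}$, where $h_{k,j}(x)$ is chosen so that $\sum_{i}X_{k,i}X_{k,i}^{\mathrm{tr}}$ reproduces both the $(k,k)$ entry and every off diagonal entry $a_{k,j}$ along the $k$-th row of $\mathbf{A}^{(k-1)}$. Formally $h_{k,j}=a_{k,j}/a_{k,k}$, so $g_{k,i}h_{k,j}$ is morally $a_{k,j}/\sqrt{a_{k,k}}$; the strong subordinate-type inequalities (\ref{off diag hyp}) --- whose exponent shift $[\tfrac12+(2-|\mu|)\varepsilon]_{+}+\delta''$ has been calibrated precisely to absorb the loss in differentiating such a quotient up to fourth order --- together with the factorization through $g_{k,i}$ are exactly what one needs to extend these coefficient products across the zero set of $a_{k,k}$ as genuine $C^{2,\delta}$ functions. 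Subtracting yields a residual $\mathbf{A}^{(k)}=\mathbf{A}^{(k-1)}-\sum_{i}X_{k,i}X_{k,i}^{\mathrm{tr}}$ that vanishes on its $k$-th row and column, whose lower right block is the Schur complement with modified entries $a_{j,l}-a_{k,j}a_{k,l}/a_{k,k}$ for $j,l>k$.

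The core of the argument is then to verify that this Schur complement remains in the hypothesis class: its diagonal entries continue to satisfy (\ref{diag hyp}), its off diagonal entries continue to satisfy (\ref{off diag hyp}) with the same parameters, and the subordinate condition is preserved. The exponent arithmetic with $\delta''>\delta'$ and the $\varepsilon$-budget is designed for exactly this inheritance. After $p-1$ inductive steps the remaining residual has all diagonal entries comparable to $a_{p,p}$, and comparability to $a_{p,p}\mathbb{I}_{n-p+1}$ makes this block quasiconformal; setting it equal to $\mathbf{Q}_{p}$ produces the block form of $\mathbf{A}_{p}$ in the statement, and the two-sided bounds (\ref{more}) on $Z_{k}=\sum_{i}X_{k,i}X_{k,i}^{\mathrm{tr}}$ fall out of tracking the size of each $g_{k,i}^{2}$ against $a_{k,k},\dots,a_{n,n}$ at each stage. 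The main obstacle I anticipate is the propagation of the subordinate condition through the Schur step: differentiating $a_{k,j}a_{k,l}/a_{k,k}$ and controlling the result pointwise by the new matrix is quantitatively delicate, and indeed the footnote's warning that the weaker semisubordinate hypothesis is ``as difficult to deal with as the sum of squares decomposition itself'' reveals just how tight the budget in (\ref{off diag hyp}) must be.
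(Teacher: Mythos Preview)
This theorem is not proved in the present paper; it is quoted from the companion paper \cite{KoSa2} (see the sentence ``But first we recall the main results from the second paper in this series \cite{KoSa2}\dots'' immediately preceding the statement). Consequently there is no proof here against which to check your proposal line by line.

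That said, the hints the paper does give are consistent with your outline. The footnote refers to ``the $1$-SD in \cite{KoSa2}'', i.e.\ a one-step decomposition iterated $p-1$ times, which matches your inductive Schur-complement scheme; and the introduction describes the underlying mechanism as the Fefferman--Phong localized splitting, adapted in \cite{KoSa1} via the implicit function theorem to produce $C^{2,\delta}$ square roots. One caution: your construction $X_{k,i}=g_{k,i}\bigl(\mathbf{e}_{k}+\sum_{j>k}h_{k,j}\mathbf{e}_{j}\bigr)$ with the \emph{global} quotient $h_{k,j}=a_{k,j}/a_{k,k}$ is almost certainly too naive as written. The Fefferman--Phong scheme does not form such quotients globally; it works through a partition of unity into regions where $a_{k,k}$ is large relative to a local scale (so the quotient and its derivatives are controlled) and regions where $a_{k,k}$ is small (so the off-diagonal hypotheses make the entire $k$-th row negligible and one can take $X_{k,i}$ to be essentially zero there). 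Your sketch implicitly assumes the exponent budget in (\ref{off diag hyp}) lets $g_{k,i}a_{k,j}/a_{k,k}$ extend to a $C^{2,\delta}$ function across $\{a_{k,k}=0\}$, but derivatives of $1/a_{k,k}$ blow up there, and the scalar roots $g_{k,i}$ from \cite{KoSa1} are themselves built from a localized construction, not a single global formula. So while your high-level plan (peel off row/column $k$, land on the Schur complement, verify the hypotheses persist, stop at the quasiconformal block) is the right architecture and matches what the paper signals, the actual execution in \cite{KoSa2} should be expected to involve a localization layer that your write-up suppresses.
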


\begin{remark}
If in addition $a_{k,k}\left( x\right) \approx 1$ for $1\leq k\leq m<p$,
then the conditions (\ref{diag hyp}) and (\ref{off diag hyp}) in $\left(
1\right) $ and $\left( 2\right) $ are vacuous for $1\leq k\leq m$, and
moreover the proof shows that the vectors $X_{k,i}$ are actually in $%
C^{4,2\delta }\left( \mathbb{R}^{M}\right) $ for $1\leq k\leq m,1\leq i\leq
I $.
\end{remark}

These remarks yield the following corollary in which conditions (\ref{diag
hyp}) and (\ref{off diag hyp}) in $\left( 1\right) $ and $\left( 2\right) $
play no role.

\begin{corollary}
Suppose $\mathbf{A}\left( x\right) $ is a $C^{4,\delta }\left( \mathbb{R}%
^{M}\right) $ symmetric $n\times n$ matrix function that is comparable to a
diagonal matrix function. In addition suppose that $a_{k,k}\left( x\right)
\approx 1$ for $1\leq k\leq p-1$ and $a_{k,k}\left( x\right) \approx
a_{p,p}\left( x\right) $ for $p\leq k\leq n$. Then%
\begin{equation*}
\mathbf{A}\left( x\right) =\sum_{k=1}^{p-1}X_{k}\left( x\right) X_{k}\left(
x\right) ^{\func{tr}}+\mathbf{Q}_{p}\left( x\right) ,\ \ \ \ \ x\in \mathbb{R%
}^{M},
\end{equation*}%
where $X_{k},\mathbf{Q}_{p}\in C^{4,\delta }\left( \mathbb{R}^{M}\right) $
and (\ref{more}) holds for $1\leq k\leq p-1$.
\end{corollary}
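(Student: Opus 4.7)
The plan is to derive the corollary as a direct specialization of Theorem~\ref{final n Grushin}, together with the remark immediately preceding it. Choose the H\"{o}lder parameter used inside the theorem to be $\delta_{\mathrm{thm}}=\delta/2$, so that the theorem's regularity input $C^{4,2\delta_{\mathrm{thm}}}$ matches the corollary's hypothesis $\mathbf{A}\in C^{4,\delta}$; the remark then delivers vectors in $C^{4,2\delta_{\mathrm{thm}}}=C^{4,\delta}$, which is exactly what is claimed. I would invoke the remark with $m=p-1$, so that the ``vacuous'' index range covers every $1\le k\le p-1$, i.e., all indices that actually appear in the resulting decomposition.

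The main verification is that conditions (1) and (2) of the theorem are satisfied automatically under the corollary's assumptions. For the diagonal estimates (\ref{diag hyp}) with $1\le k\le p-1$: since $a_{k,k}\approx 1$, the right-hand side $a_{k,k}^{[1-|\mu|\varepsilon]_{+}+\delta'}$ is bounded above and below by positive constants, and the bound on $|D^{\mu}a_{k,k}|$ for $1\le|\mu|\le 4$ is then a trivial consequence of $C^{4,\delta}$ regularity, as is $[a_{k,k}]_{\mu,2\delta_{\mathrm{thm}}}=[a_{k,k}]_{\mu,\delta}\lesssim 1$. For the off-diagonal estimates (\ref{off diag hyp}): since $a_{s,s}\approx 1$ for every $s\le p-1$, both $\min_{1\le s\le j}a_{s,s}$ and $\min_{1\le s\le k}a_{s,s}$ are $\approx 1$ throughout the relevant index range, so the inequalities reduce to the plain bound $|D^{\mu}a_{k,j}|\lesssim 1$, which again follows immediately from $\mathbf{A}\in C^{4,\delta}$ (together with the corresponding H\"{o}lder seminorm estimate for $|\mu|=4$).

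With every hypothesis verified, Theorem~\ref{final n Grushin} together with the remark yields a decomposition
\begin{equation*}
\mathbf{A}(x)=\sum_{k=1}^{p-1}\sum_{i=1}^{I}X_{k,i}(x)X_{k,i}(x)^{\func{tr}}+\mathbf{A}_{p}(x),
\end{equation*}
in which each $X_{k,i}\in C^{4,\delta}(\mathbb{R}^{M})$ by the upgraded regularity from the remark, the nonzero block $\mathbf{Q}_{p}$ of $\mathbf{A}_{p}$ lies in $C^{4,2\delta_{\mathrm{thm}}}=C^{4,\delta}(\mathbb{R}^{M})$ and is quasiconformal, and the chain of inequalities (\ref{more}) holds for $1\le k\le p-1$. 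Writing $X_{k}(x)$ for the $n\times I$ matrix whose columns are the vectors $X_{k,i}(x)$, so that $X_{k}(x)X_{k}(x)^{\func{tr}}=\sum_{i=1}^{I}X_{k,i}(x)X_{k,i}(x)^{\func{tr}}$, collapses the double sum into the single sum appearing in the corollary.

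I do not expect a substantive obstacle here, since the argument is purely a specialization of already-proved results. The only point needing care is the factor-of-two bookkeeping in the H\"{o}lder exponent when translating between the theorem's $C^{4,2\delta}$ scale and the corollary's $C^{4,\delta}$ scale, together with the requirement that the remark's vacuous range be taken large enough (namely $m=p-1$) so that the upgraded $C^{4,2\delta_{\mathrm{thm}}}$ regularity applies to every vector $X_{k,i}$ that contributes to the decomposition.
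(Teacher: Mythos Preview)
Your proposal is correct and follows exactly the route the paper takes: the corollary is stated immediately after the remark with the one-line justification ``These remarks yield the following corollary in which conditions (\ref{diag hyp}) and (\ref{off diag hyp}) \ldots play no role,'' and you have simply unpacked that sentence---choosing $m=p-1$ in the remark, halving the H\"older exponent to match $C^{4,\delta}$ with the theorem's $C^{4,2\delta_{\mathrm{thm}}}$, and packaging the inner sum over $i$ into a matrix $X_k$. There is nothing to add.
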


\begin{remark}
If the diagonal entry $a_{k,k}\left( x\right) $ is smooth and $\omega _{s}$%
-montone on $\mathbb{R}^{n}$ for some $s>1-\varepsilon $, then the diagonal
differential estimates (\ref{diag hyp}) above hold for $a_{k,k}\left(
x\right) $ since $\left\vert D^{\mu }a_{k,k}\left( x\right) \right\vert \leq
C_{s,s^{\prime }}a_{k,k}\left( x\right) ^{s^{\prime }}$ for any $s^{\prime
}<s$ (\cite[Theorem 18]{KoSa2}).
\end{remark}

\begin{remark}
If in Theorem \ref{final n Grushin}, we drop the hypothesis (\ref{diag hyp})
that the diagonal entries satisfy the differential estimates, and even
slightly weaken the off diagonal hypotheses (\ref{off diag hyp}), then using
the Fefferman-Phong theorem for sums of squares of scalar functions, the
proof of Theorem \ref{final n Grushin} shows that the operator $L=\nabla ^{%
\limfunc{tr}}\mathbf{A}\nabla $ can be written as $L=\sum_{j=1}^{N}X_{j}^{%
\func{tr}}X_{j}$ where the vector fields $X_{j}$ are $C^{1,1}$ for$\
j=1,2,...,N$. However, unlike the situation for scalar functions, the
example in Theorem 38 of \cite{KoSa2} shows that we \emph{cannot} dispense
entirely with the off diagonal hypotheses (\ref{off diag hyp}) in $\left(
2\right) $. Moreover, the space $C^{1,1}$ seems not to be sufficient for
gaining a positive degree $\delta $ of smoothness for solutions to a second
order operator, and so this result will neither be used nor proved here.
\end{remark}

In this paper we will apply the sums of squares representations for matrix
functions obtained in \cite{KoSa2} to a rough generalization of a theorem of
M. Christ, that then leads to our main hypoellipticity theorem via a
bootstrap argument.

\section{Statement of main hypoellipticity theorems}

We begin with the following general hypoellipticity theorem in the
infinitely degenerate regime as in Step (4) of the introduction. We
emphasize that we make no assumptions regarding the order of vanishing of
the matrix function $A\left( x\right) $ at the origin. Since we only
consider degeneracies at the origin, it is useful to make the following
definition.

\begin{definition}
We say that a $q\times q$ matrix function $f:\mathbb{R}^{n}\rightarrow 
\mathbb{R}^{q^{2}}$ on $\mathbb{R}^{n}$ is \emph{elliptical} if $f\left(
x\right) $ is positive definite for $x\neq 0$. A scalar function $f$
corresponds to the case $q=1$.
\end{definition}

\begin{theorem}
\label{HYP G}Suppose $1\leq m<p\leq n$. Let $L$ be a second order real
self-adjoint divergence form partial differential operator in $\mathbb{R}%
^{n} $ given by 
\begin{equation}
L=\nabla ^{\func{tr}}A\left( x\right) \nabla +D\left( x\right) ,
\label{L form' G}
\end{equation}%
where the matrix $A$ and scalar $D$ are smooth real functions of $x\in 
\mathbb{R}^{n}$, and $A\left( x\right) $ is subordinate, i.e. $\frac{%
\partial A}{\partial x_{k}}\nabla $ is subunit with respect to $\nabla ^{%
\func{tr}}A\left( x\right) \nabla $.

\begin{enumerate}
\item Suppose further that with $\tilde{x}=\left( x_{1},...,x_{m}\right) $
we have the following \emph{Grushin} assumption, 
\begin{equation}
A\left( x\right) \sim \left[ 
\begin{array}{cc}
\mathbb{I}_{m} & 0 \\ 
0 & D_{\mathbf{\lambda }}\left( \tilde{x}\right)%
\end{array}%
\right] ,  \label{A comp' G}
\end{equation}%
where $\mathbb{I}_{m}$ is the $m\times m$ identity matrix, and $D_{\mathbf{\
\lambda }}\left( \tilde{x}\right) $ is the $\left( n-m\right) \times \left(
n-m\right) $ diagonal matrix with the components of $\mathbf{\lambda }\left( 
\tilde{x}\right) =\left( \lambda _{m+1}\left( \tilde{x}\right) ,...,\lambda
_{n}\left( \tilde{x}\right) \right) $ along the diagonal, i.e. 
\begin{equation}
D_{\lambda }\left( \xi \right) =\left[ 
\begin{array}{cccc}
\lambda _{m+1}\left( \tilde{x}\right) & 0 & \cdots & 0 \\ 
0 & \lambda _{m+2}\left( \tilde{x}\right) & \ddots & \vdots \\ 
\vdots & \ddots & \ddots & 0 \\ 
0 & \cdots & 0 & \lambda _{n}\left( \tilde{x}\right)%
\end{array}%
\right] .  \label{def diag G}
\end{equation}

\begin{enumerate}
\item Moreover, we suppose that the component functions $\lambda _{\ell }$
are elliptical in $\mathbb{R}^{m}$, and $\lambda_{p}(\tilde{x})\approx
\lambda_{p+1}(\tilde{x})\approx \dots\approx \lambda_{n}(\tilde{x})$.

\item We also suppose that there are positive numbers $0<\delta <\delta
^{\prime \prime }<\frac{1}{2}$, $\frac{1}{4}\leq \varepsilon <1$, such that
for $\delta ^{\prime }=\frac{2\delta \left( 1+\delta \right) }{2+\delta }$
and for $k<j\leq n$ and $1\leq k\leq p-1$, the entries $a_{k,j}\left(
x\right) $ of $A\left( x\right) $ satisfy the differential size inequalities%
\footnote{%
The diagonal inequalities become more demanding the smaller $\varepsilon $
is, while the off diagonal inequalities become less demanding.} in (\ref%
{diag hyp}) and (\ref{off diag hyp}) for all $x\in \mathbb{R}^{n}$.
\end{enumerate}

\item Then $L$ is hypoelliptic if 
\begin{equation}
\lim_{\hat{x}\rightarrow 0}\mu (|\tilde{x}|,\sqrt{\max \left\{ \lambda
_{m+1},...,\lambda _{p}\right\} \left( \tilde{x}\right) })\ln \min \left\{
\lambda _{m+1},...,\lambda _{p}\right\} \left( \tilde{x}\right) =0,
\label{star}
\end{equation}%
where 
\begin{equation*}
\mu (t,g)\equiv \max \{g(z)(t-|z|):0\leq |z|\leq t\}.
\end{equation*}%
Moreover, condition (\ref{star}) is necessary for hypoellipticity if in
addition $A\left( x\right) $ is a diagonal matrix with monotone entries.
\end{enumerate}
\end{theorem}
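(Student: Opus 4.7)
The plan is to combine the matrix sum-of-squares decomposition (Theorem~\ref{final n Grushin}) with the $C^{2,\delta}$ generalization of M.~Christ's hypoellipticity theorem (Step~(3) of the introduction, to be established in a later section), and then iterate the resulting Sobolev gain via a bootstrap argument.

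First I would apply Theorem~\ref{final n Grushin} to $\mathbf{A}=A(x)$: hypothesis (1)(b) of Theorem~\ref{HYP G} is exactly the differential-estimate hypothesis of Theorem~\ref{final n Grushin}, and subordinaticity of $A$ is inherited by the remainder. This yields
\begin{equation*}
A(x) = \sum_{k=1}^{p-1} \sum_{i=1}^{I} X_{k,i}(x) X_{k,i}(x)^{\func{tr}} + A_{p}(x),
\end{equation*}
with $C^{2,\delta}$ vector fields $X_{k,i}$ and a remainder $A_{p}$ supported in its lower-right $(n-p+1)\times(n-p+1)$ block $Q_{p}\sim\lambda_{p}(\tilde x)\,\mathbb{I}_{n-p+1}$, itself quasiconformal and subordinate. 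Writing $L$ in divergence form we obtain, modulo commutator terms absorbed into a smooth first- and zeroth-order remainder,
\begin{equation*}
L = \sum_{k=1}^{p-1}\sum_{i=1}^{I}(X_{k,i}\cdot\nabla)^{*}(X_{k,i}\cdot\nabla) + \nabla^{\func{tr}} A_{p}\nabla + \widetilde{D}(x),
\end{equation*}
where the quasiconformal block $\nabla^{\func{tr}} A_{p}\nabla$ is essentially $\lambda_{p}(\tilde x)$ times the Laplacian in $(x_{p},\ldots,x_{n})$, controlled via the subordinaticity of $Q_p$ as in the footnote to Theorem~\ref{final n Grushin}.

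Next I would invoke the $C^{2,\delta}$ rough Christ theorem on this operator. The effective subelliptic symbol of $L$ is pinched between $\min\{\lambda_{m+1},\ldots,\lambda_{p}\}(\tilde x)\,|\xi|^{2}$ and $\max\{\lambda_{m+1},\ldots,\lambda_{p}\}(\tilde x)\,|\xi|^{2}$; the upper envelope determines Christ's $\mu$-factor (encoding the rate at which the leading coefficient grows away from the degeneracy locus) and the lower envelope determines the logarithmic loss. Christ's hypothesis in this rough setting thus reads precisely as (\ref{star}), and its conclusion is a local Sobolev gain $Lu\in H^{s}_{\mathrm{loc}}\Rightarrow u\in H^{s+\epsilon}_{\mathrm{loc}}$, valid in a range of $s$ limited by $\delta$.

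Finally I would bootstrap. Smoothness of $A$, $D$, and $\phi$ means each gain can be iterated; the Sobolev ceiling imposed by any fixed $C^{2,\delta}$ decomposition is to be overcome by re-extracting the sum of squares at the regularity scale just reached, exploiting the smoothness of the underlying matrix $A$ itself rather than the finite H\"older regularity of a single extracted frame. I expect this bootstrap, rather than the single-step gain, to be the principal obstacle: one must arrange the iteration so that the $\delta$-limited Sobolev gains accumulate to full $C^{\infty}$ regularity without being blocked at any finite H\"older level. Necessity of (\ref{star}) when $A$ is diagonal with monotone entries follows by reducing to the Kusuoka--Stroock-type one-variable models, in which monotonicity permits the transplantation of Christ's construction of non-smooth distributional solutions whenever the analogous limit fails.
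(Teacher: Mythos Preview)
Your overall architecture---apply Theorem~\ref{final n Grushin} to decompose $A$, feed the result into the rough Christ theorem, then bootstrap---matches the paper.  The gap is in the bootstrap itself, and it is not a minor one.

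You propose to overcome the Sobolev ceiling of the $C^{2,\delta}$ calculus by ``re-extracting the sum of squares at the regularity scale just reached.''  But the decomposition $A=\sum X_{k,i}X_{k,i}^{\func{tr}}+A_{p}$ depends only on $A$, not on any solution $u$ or target Sobolev index; there is nothing to re-extract at a higher scale, and the $X_{k,i}$ do not become smoother as the iteration proceeds.  The paper's mechanism is different and is the actual content of the reduction: one conjugates the \emph{smooth} operator $L$ by $\Lambda_{s}=(1+|\xi|^{2})^{s/2}$ and shows (Lemma~\ref{lem:conj} and Remark~\ref{Grush-conj}) that
\[
\Lambda_{s}L\Lambda_{-s}=\sum_{j}X_{j}^{\func{tr}}X_{j}+\sum_{j}B_{j}X_{j}+\sum_{j}X_{j}^{\func{tr}}\widetilde{B}_{j}+B_{0}+R_{1}+\widehat{\nabla}^{\func{tr}}\mathbf{Q}_{p}\widehat{\nabla},
\]
with the \emph{same} rough vector fields $X_{j}$ and with coefficients $B_{j},\widetilde{B}_{j}\in\limfunc{Op}(C^{1,\delta}S_{1,0}^{0})$, $B_{0}\in\mathcal{O}_{(-\delta/2,\delta/2)}^{-\delta/2+\varepsilon}$.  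The point is that the commutator $[\Lambda_{s},L]$ can be computed with the smooth calculus because $L\in\limfunc{Op}(S_{1,0}^{2})$, and then the rough calculus (Theorem~\ref{Saw calc}) is used only to rewrite $[\Lambda_{s},X_{j}]\Lambda_{-s}$ as a zeroth-order operator.  Thus every step of the bootstrap is reduced to a \emph{single} application of Theorem~\ref{main_thm_Grushin} at the $H^{0}\to H^{\gamma}$ level, with one fixed $\gamma>0$; no re-decomposition is needed, and the Sobolev ceiling never intervenes.  You should replace your bootstrap paragraph with this conjugation argument.

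Two smaller remarks.  First, passing from the decomposition of $A$ to the hypotheses of the rough Christ theorem is not automatic: one must verify the subelliptic estimate~(\ref{Christ_cond1-G}) and the superlogarithmic estimate~(\ref{superlog-G}); the paper does this in the proof of Theorem~\ref{HYPSOS G} via Lemmas~\ref{lem:aux1}--\ref{lem:cond1} and a careful choice of the cutoff $\psi$, and this is where condition~(\ref{star}) actually enters.  Second, the necessity argument in the diagonal monotone case is not a Kusuoka--Stroock probabilistic construction but a spectral one: the paper isolates a pair of indices $k,j$ for which the limit fails, reduces to an $(m{+}2)$-dimensional model (Proposition~\ref{L1 L2}), and then uses a Rayleigh-quotient eigenvalue bound together with Hoshiro's closed-graph lemma (Lemma~\ref{lem:Hosh}) to exhibit a sequence contradicting~(\ref{hypo_bound}).
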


\begin{remark}
Note that when $m=1$, it suffices to assume only \emph{smoothness} of the
diagonal entries $\lambda _{\ell }\left( \tilde{x}\right) $ in place of (\ref%
{diag hyp}), in view of Bony's sum of squares theorem \cite[Th\'{e}or\`{e}me
1]{Bon}.
\end{remark}

Here is a variation, without any special hypotheses on the diagonal entries,
that will be used to prove Theorem \ref{HYP G} in conjunction with the sum
of squares decomposition in Theorem \ref{final n Grushin}. However, the
proof of this next result will require a generalization of M. Christ's sum
of squares theorem to include $C^{2,\delta }$ vector fields.

\begin{theorem}
\label{HYPSOS G} Let $L$ be a real second order divergence form partial
differential operator in $\mathbb{R}^{n}$ satisfying (\ref{L form' G}). Let $%
1\leq m<p\leq n+1$, and write%
\begin{equation*}
x=\left( x_{1},...,x_{m},x_{m+1},...,x_{p-1},x_{p},...,x_{n}\right) =\left( 
\tilde{x},\check{x},\hat{x}\right) \in \mathbb{R}^{m}\times \mathbb{R}%
^{p-m-1}\times \mathbb{R}^{n-p+1},
\end{equation*}%
where the middle factor $\mathbb{R}^{p-m-1}$ vanishes if $p=m+1$, and the
final factor vanishes if $p=n+1$.

\begin{enumerate}
\item Suppose that there exist $C^{2,\delta }$ vector fields $X_{j}\left(
x\right) \in \limfunc{Op}\left( \mathcal{C}^{2,\delta }S_{1,0}^{1}\right) $
for $1\leq j\leq N$, and an $\left( n-p+1\right) \times \left( n-p+1\right) $
matrix function $\mathbf{Q}_{p}\left( x\right) \in C^{4,2\delta }$ that is
elliptical, quasiconformal and subordinate, such that 
\begin{equation*}
L=\left( \sum_{j=1}^{N}X_{j}^{\func{tr}}X_{j}+\hat{\nabla}^{\func{tr}}%
\mathbf{Q}_{p}\left( x\right) \hat{\nabla}\right)
+\sum_{j=1}^{N}A_{j}X_{j}+\sum_{j=1}^{N}X_{j}^{\func{tr}}\tilde{A}_{j}+A_{0},
\end{equation*}%
where $\hat{\nabla}=(\partial _{x_{p}},\dots ,\partial _{x_{n}})$ and $A_{j},%
\tilde{A}_{j}\in \limfunc{Op}(\mathcal{C}^{1,\delta }S_{1,0}^{0})$, $%
A_{0}\in \mathcal{O}_{\left( -\delta /2,\delta /2\right) }^{-\delta
/2+\varepsilon }$ for all $\varepsilon >0$.

\item Suppose further that there are elliptical scalar functions $\lambda
_{m+1}\left( \tilde{x}\right) ,...\lambda _{p}\left( \tilde{x}\right) \in
C^{2}\left( \mathbb{R}^{n}\right) $ with $0\leq \lambda _{j}\leq 1$ for all $%
j$, such that $\mathbf{Q}_{p}\left( x\right) \sim \lambda _{p}\left( \tilde{x%
}\right) \mathbb{I}_{n-p+1}$ and such that the following inequalities hold
for all Lipschitz functions $v$:%
\begin{eqnarray}
&&\sum_{k=1}^{m}\left\vert \partial _{x_{k}}v\right\vert
^{2}+\sum_{k=m+1}^{p-1}\lambda _{k}\left( \tilde{x}\right) \left\vert
\partial _{x_{k}}v\right\vert ^{2}\lesssim \sum_{j=1}^{N}\left\vert
X_{j}v\right\vert ^{2}+\lambda _{p}\left( \tilde{x}\right)
\sum_{k=p}^{n}\left\vert \partial _{x_{k}}v\right\vert ^{2},
\label{add_cond_gen G} \\
&&\sum_{j=1}^{N}\left\vert X_{j}v\right\vert ^{2}\lesssim
\sum_{k=1}^{m}\left\vert \partial _{x_{k}}v\right\vert
^{2}+\sum_{k=m+1}^{p-1}\lambda _{k}\left( \tilde{x}\right) \left\vert
\partial _{x_{k}}v\right\vert ^{2}+\lambda _{p}\left( \tilde{x}\right)
\sum_{k=p}^{n}\left\vert \partial _{x_{k}}v\right\vert ^{2}  \notag
\end{eqnarray}

\item Finally set%
\begin{equation*}
\Lambda _{\func{sum}}\left( \tilde{x}\right) \equiv \sum_{k=m+1}^{p}\lambda
_{k}\left( \tilde{x}\right) \text{ and }\Lambda _{\func{product}}\left( 
\tilde{x}\right) \equiv \dprod\limits_{k=m+1}^{p}\lambda _{k}\left( \tilde{x}%
\right) ,
\end{equation*}%
and define the Koike functional $\mu \left( t,g\right) $ for any function $%
g\left( \tilde{x}\right) $ by 
\begin{equation}
\mu \left( t,g\right) \equiv \max \{g(\tilde{x})(t-|\tilde{x}|):0\leq |%
\tilde{x}|\leq t\}.  \label{mu_def_nd G}
\end{equation}

\item Then the operator $L$ is hypoelliptic if 
\begin{equation}
\lim_{x\rightarrow 0}\mu (|\tilde{x}|,\sqrt{\Lambda _{\func{sum}}})\ln
\Lambda _{\func{product}}\left( \tilde{x}\right) =0.  \label{log assump_n' G}
\end{equation}%
This is sharp in the sense that (\ref{log assump_n' G}) holds if $L$ is both
hypoelliptic and \emph{diagonal with monotone entries}.
\end{enumerate}
\end{theorem}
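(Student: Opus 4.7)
The hypothesis already exhibits $L$ as a finite sum of squares of $C^{2,\delta}$ vector fields $X_j$, plus the quasiconformal block $\hat{\nabla}^{\func{tr}}\mathbf{Q}_{p}\hat{\nabla}$, plus rough first and zero order pseudodifferential remainders. My plan is to (a) apply the rough generalization of M.\ Christ's sum-of-squares hypoellipticity theorem (which, per the roadmap in Step (3), is established in the preceding section using the rough symbol calculus of \cite{Saw}) to the $X_j$ part; (b) absorb the quasiconformal block using the sandwich inequality (\ref{add_cond_gen G}) together with the subordinaticity and $C^{4,2\delta}$ regularity of $\mathbf{Q}_p$; and (c) promote the resulting a priori estimate to full hypoellipticity by a Sobolev-scale bootstrap. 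Sharpness in the diagonal monotone case will follow from a separate Kusuoka-Stroock / Christ style construction of approximate null solutions.

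\textbf{Matching the Koike-Christ criterion.} The rough Christ theorem provides a subelliptic gain precisely when the Koike functional, evaluated at the ``size'' of the symbol, multiplied by the logarithm of the ``determinant'' of the frozen operator tends to zero. The sandwich condition (\ref{add_cond_gen G}) identifies $\sum_j |X_j v|^2$, up to the $\lambda_p$-term which is handled by the $\mathbf{Q}_p$-block, with the weighted Dirichlet energy having diagonal weights $1,\ldots,1,\lambda_{m+1}(\tilde x),\ldots,\lambda_{p-1}(\tilde x)$. The maximal eigenvalue of the frozen symbol at scale $\tilde x$ is then comparable to $\Lambda_{\func{sum}}(\tilde x)$, so the subunit ball radius is governed by $\mu(|\tilde x|,\sqrt{\Lambda_{\func{sum}}})$; meanwhile, the Fourier-analytic loss intrinsic to the parametrix for the frozen scalar operator is $|\ln\Lambda_{\func{product}}(\tilde x)|$, since one must invert a diagonal operator whose determinant is essentially $\Lambda_{\func{product}}$. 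Thus (\ref{log assump_n' G}) is \emph{exactly} the hypothesis required to feed into the rough Christ theorem, yielding a uniform gain $\sigma>0$ in a range of Sobolev indices.

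\textbf{Quasiconformal block and bootstrap.} Integration by parts against the quasiconformal block produces, thanks to $\mathbf{Q}_p\sim \lambda_p(\tilde x)\mathbb{I}_{n-p+1}$ together with subordinaticity, the energy term $\lambda_p(\tilde x)\sum_{k=p}^n|\partial_{x_k}v|^2$ appearing on the right of (\ref{add_cond_gen G}); this is absorbable into the $X_j$-energy so the two pieces combine into a single subelliptic estimate $\|u\|_{s+\sigma}\lesssim \|Lu\|_s+\|u\|_0$ on the window $s\in(-\delta/2,\delta/2)$ dictated by the $C^{2,\delta}$ regularity of the $X_j$ and the class $\mathcal{O}_{(-\delta/2,\delta/2)}^{-\delta/2+\varepsilon}$. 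The first order rough terms $A_jX_j$ and $X_j^{\func{tr}}\tilde A_j$ are absorbed by Cauchy-Schwarz using $A_j,\tilde A_j\in \limfunc{Op}(\mathcal{C}^{1,\delta}S_{1,0}^0)$, and $A_0$ is perturbative. A difference-quotient bootstrap then iterates the fixed gain $\sigma$, using the rough commutator calculus of \cite{Saw} (in place of smooth calculus) to justify each step within the admissible Sobolev range; the conclusion is that $u\in H^{-\delta/2}_{\func{loc}}$ together with $Lu\in C^\infty_{\func{loc}}$ forces $u\in C^\infty_{\func{loc}}$.

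\textbf{Necessity and the main obstacle.} When $A$ is diagonal with monotone entries, failure of (\ref{log assump_n' G}) provides a sequence $\tilde x_\ell\to 0$ along which $\mu(|\tilde x_\ell|,\sqrt{\Lambda_{\func{sum}}})\,|\ln\Lambda_{\func{product}}(\tilde x_\ell)|\ge c>0$. Following the Kusuoka-Stroock / Christ construction, I would form a distribution solution by a WKB/separation-of-variables superposition $u(x)=\sum_\ell \eta_\ell(\tilde x)e^{i\langle\xi_\ell,(\check x,\hat x)\rangle}$ where $\eta_\ell$ is an approximate ground state of the frozen operator at Fourier scale $|\xi_\ell|$, whose exponential concentration length is set by $\mu$ and whose eigenvalue decay rate is set by $\ln\Lambda_{\func{product}}$; the quantitative failure of the Koike condition then yields a $u$ which is smooth away from $0$ but singular at $0$, while $Lu$ is smooth, contradicting hypoellipticity. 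The principal obstacle throughout is step (b)--(c): tracking the Sobolev indices through the rough symbol calculus so that the loss factor produced by the parametrix is \emph{exactly} $\ln\Lambda_{\func{product}}$ (rather than some cruder majorant of the $\lambda_k$), and simultaneously keeping the bootstrap within the $C^{2,\delta}$-admissible range at each iteration.
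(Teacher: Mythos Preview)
Your overall architecture is right: reduce to Theorem \ref{main_thm_Grushin} via the conjugation $\Lambda_s L\Lambda_{-s}$ and the rough calculus of \cite{Saw}, then verify its hypotheses. But you have misidentified where the work lies. The rough Christ theorem does \emph{not} take the Koike condition (\ref{log assump_n' G}) as a hypothesis; its hypotheses are the abstract estimates (\ref{Christ_cond1-G}) and (\ref{superlog-G}). Your paragraph ``Matching the Koike--Christ criterion'' asserts that (\ref{log assump_n' G}) is ``exactly the hypothesis required,'' with a heuristic about subunit balls and determinants of frozen operators. That heuristic is not a proof, and in fact the entire content of the sufficiency argument in the paper is the verification of (\ref{superlog-G}) from (\ref{log assump_n' G}). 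Concretely: one must construct a specific cutoff $\psi(x,\xi)$ that depends only on $(x_{m+1},\ldots,x_n)$ in a strip near the characteristic set, so that the Poisson brackets $\{\psi,\sigma(X_j)\}$ and $\{\psi,\hat\xi\}$ reduce, via (\ref{add_cond_gen G}), to multiplication by $\sqrt{\Lambda_{\func{sum}}(\tilde x)}\log\langle\eta\rangle$. One is then left to prove the scalar inequality
\[
(\log\tau)^2\,\|\sqrt{\Lambda_{\func{sum}}}\,\varphi\|^2 \le \delta(\tau)\|\nabla_{\tilde x}\varphi\|^2 + \delta(\tau)\tau^2\|\sqrt{\Lambda_{\func{product}}}\,\varphi\|^2,\qquad \delta(\tau)\to 0,
\]
for $\varphi\in C^1_0(\mathbb{R}^m)$. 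This is where (\ref{log assump_n' G}) actually enters: one introduces a cutoff $\nu(\tilde x)=\chi(\tau\Lambda_{\func{product}}(\tilde x))$, applies Hardy's inequality on the ball of radius $r(\tau)=\sup\{|\tilde y|:\tau\Lambda_{\func{product}}(\tilde y)\le 2\}$ to get $\|\sqrt{\Lambda_{\func{sum}}}\,\phi\|^2\lesssim \mu(r,\sqrt{\Lambda_{\func{sum}}})^2\|\nabla_{\tilde x}\phi\|^2$, and controls $\|\nabla_{\tilde x}\nu\|$ via the Malgrange inequality for the product $\Lambda_{\func{product}}$. None of this is in your proposal; your ``determinant'' explanation for why $\Lambda_{\func{product}}$ appears is not the mechanism. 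Similarly, (\ref{Christ_cond1-G}) requires the explicit construction of the weight $w$ (Lemmas \ref{lem:aux1}--\ref{lem:cond1}), which you do not mention.

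For the sharpness direction, your WKB superposition idea is in the right spirit, but the paper's argument is more structured: it first reduces (Proposition \ref{L1 L2}) to an $(m{+}2)$-dimensional operator $\Delta_{\tilde x}+\lambda_k\partial_y^2+\lambda_j\partial_t^2$ for a single bad pair $(k,j)$, then (Theorem \ref{strongly mon}) analyzes the weighted eigenvalue problem $(-\Delta_{\tilde x}+\eta^2 f^2)v=\lambda h^2 v$ via the Rayleigh quotient, using failure of (\ref{log assump_n' G}) along a sequence to force $\lambda_0(1,\eta_n)\lesssim(\ln\eta_n)^2$, and contradicts hypoellipticity via Hoshiro's closed-graph estimate (Lemma \ref{lem:Hosh}). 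Your sketch does not contain the Rayleigh-quotient bound or the closed-graph step, both of which are essential.
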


%
%
%
%
%

Here is our rough version, in the setting of sums of squares of real vector
fields, of M. Christ's hypoellipticity theorem as needed in Step (3) of the
introduction. Note in particular that the vector fields $X_{j}$ appearing
below are only assumed to be $C^{2,\delta }$, while the sum of their squares 
$\sum_{j}X_{j}^{\func{tr}}X_{j}$ is assumed to be smooth.

\begin{theorem}
\label{main_thm_Grushin}Suppose $1\leq p\leq n$ and $N\geq 1$. Let $R\subset
T^{\ast }V$, the cotangent bundle of an open set $V\subset R^{n}$, be any
ray, and assume that the operator $L$ has the form 
\begin{equation}
L=\sum_{j=1}^{N}X_{j}^{\func{tr}}X_{j}+\sum_{j=1}^{N}A_{j}X_{j}+%
\sum_{j=1}^{N}X_{j}^{\func{tr}}\tilde{A}_{j}+R_{1}+A_{0}+\widehat{\nabla }^{%
\func{tr}}\cdot \mathbf{Q}_{p}\left( x\right) \widehat{\nabla },
\label{operator-Grushin}
\end{equation}%
where the vector fields $X_{j},\ j=1,2,...,N$ are $C^{2,\delta }\left( 
\mathbb{R}^{n}\right) $ differential operators, and $\mathbf{Q}_{p}\left(
x\right) $ is a $C^{4,2\delta }\left( \mathbb{R}^{m}\right) $ $\left(
n-p+1\right) \times \left( n-p+1\right) $ matrix that is subordinate and
quasiconformal, and $\widehat{\nabla }=(\partial _{x_{p}},\dots \partial
_{x_{n}})$.

\begin{enumerate}
\item Assume further that $\mathbf{Q}_{p}=\mathbf{Q}_{p}(x)\approx a(x)%
\mathbb{I}_{n-p+1}$ with $a\in C^{4,2\delta }\left( \mathbb{R}^{n}\right) $
elliptical, $L\in \limfunc{Op}(S_{1,0}^{2})$, $X_{j}\in \limfunc{Op}(%
\mathcal{C}^{2,\delta }S_{1,0}^{1})$ and $A_{j},\tilde{A}_{j}\in \limfunc{Op}%
(\mathcal{C}^{1,\delta }S_{1,0}^{0})$, $A_{0}\in \mathcal{O}_{\left( -\delta
/2,\delta /2\right) }^{-\delta /2+\varepsilon }$ for all $\varepsilon >0$,
in some conic neighbourhood $V$ of $R$.

\begin{enumerate}
\item In addition, assume $R_{1}=\sum_{k=1}^{n}S_{k}\Theta _{k}\circ 
\widehat{\nabla }$, where each $S_{k}\in C^{1,\delta }(\mathbb{R}^{m\times
m})$ is subunit with respect to $\mathbf{Q}_{p}$, and $\Theta _{k}=(\Theta
_{kp},\dots ,\Theta _{kn})$ is a multiplier of order zero.

\item Suppose there exists $w\in C^{\infty }$ satisfying $w(\xi )\rightarrow
\infty $ as $|\xi |\rightarrow \infty $ such that%
\begin{equation}
\int_{\mathbb{R}^{d}}w^{2}(\xi )|\hat{u}(\xi )|^{2}d\xi \leq
C\sum_{j}||X_{j}u||^{2}+C||\sqrt{a}\widehat{\nabla }u||^{2}+C||u||^{2}\quad
\forall \ u\in C_{0}^{1}(V),  \label{Christ_cond1-G}
\end{equation}

\item Finally, suppose that for each small conic neighborhood $\Gamma $ of $%
R $ there exist scalar valued symbols $\psi ,p\in S_{1,0}^{0}$ such that $%
\psi $ is everywhere nonnegative, $\psi $ does not depend on $\xi $ in $%
\Gamma $, $\psi \equiv 0$ in some smaller conic neighborhood of $R$, $\psi
\geq 1$ on $T^{\ast }V\backslash \Gamma $, $p\equiv 0$ in a conic
neighborhood of the closure of $\Gamma $, and such that for each $\delta >0$
there exists $C_{\delta }<\infty $ such that for any relatively compact open
subset $U\Subset V$ and for all $u\in C_{0}^{2}(U)$ and each index $i$,%
\begin{eqnarray}
\left\Vert \limfunc{Op}\left[ \log \langle \xi \rangle \{\psi ,\sigma
(X_{i})\}\right] u\right\Vert ^{2} &\leq &\delta \sum_{j}\left\Vert
X_{j}u\right\Vert ^{2}+\delta \left\Vert \sqrt{a}\widehat{\nabla }%
u\right\Vert ^{2}+C_{\delta }\left\Vert u\right\Vert ^{2}+C_{\delta
}\left\Vert \func{Op}(p)u\right\Vert _{H^{1}}^{2},  \label{superlog-G} \\
\left\Vert \sqrt{\mathbf{Q}_{p}}\,\limfunc{Op}\left[ \log \langle \xi
\rangle \{\psi ,\widehat{\xi }\}\right] u\right\Vert ^{2} &\leq &\delta
\sum_{j}\left\Vert X_{j}u\right\Vert ^{2}+\delta \left\Vert \sqrt{a}\widehat{%
\nabla }u\right\Vert ^{2}+C_{\delta }\left\Vert u\right\Vert ^{2}+C_{\delta
}\left\Vert \func{Op}(p)u\right\Vert _{H^{1}}^{2},  \notag
\end{eqnarray}%
where $\widehat{\xi }=\left( \xi _{p},\dots ,\xi _{n}\right) $.
\end{enumerate}

\item Then there exists $\gamma >0$ such that for any $u\in L_{loc}^{2}$ we
have $Lu\in H^{\gamma }(R)\implies u\in H^{\gamma }(R)$.
\end{enumerate}
\end{theorem}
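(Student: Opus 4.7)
The plan is to adapt M. Christ's microlocal bootstrap from \cite{Chr} to the rough regime, replacing the smooth pseudodifferential calculus by the calculus of $\mathcal{C}^{r}S_{1,0}^{m}$ symbols developed in \cite{Saw}. Starting with $u\in L^{2}_{\mathrm{loc}}$ and $Lu\in H^{\gamma}(R)$ for a $\gamma>0$ to be chosen small, I would reduce the theorem to the following a priori bound: for a family of microlocal cutoffs $\varphi_{s}\in S_{1,0}^{s}$ supported in a conic neighborhood $\Gamma$ of $R$ and elliptic like $\langle\xi\rangle^{s}$ on a smaller conic subneighborhood, the norm $\|\limfunc{Op}(\varphi_{s})u\|$ is controlled uniformly for $0\le s\le \gamma$. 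A standard Friedrichs mollifier and duality argument then upgrades this to $u\in H^{\gamma}(R)$.

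The workhorse estimate comes from pairing $Lv$ with $v$ where $v=\limfunc{Op}(\varphi_{s})u$. Integrating by parts against the principal part $\sum_{j}X_{j}^{\func{tr}}X_{j}+\widehat\nabla^{\func{tr}}\mathbf{Q}_{p}\widehat\nabla$ produces
\begin{equation*}
\sum_{j}\|X_{j}v\|^{2}+\|\sqrt{\mathbf{Q}_{p}}\,\widehat\nabla v\|^{2}\lesssim |\langle Lv,v\rangle|+(\text{zeroth-order remainders}),
\end{equation*}
and combining with hypothesis (\ref{Christ_cond1-G}) yields
\begin{equation*}
\int w^{2}(\xi)|\widehat{v}(\xi)|^{2}\,d\xi+\sum_{j}\|X_{j}v\|^{2}+\|\sqrt{a}\,\widehat\nabla v\|^{2}\lesssim |\langle Lv,v\rangle|+\|v\|^{2}.
\end{equation*}
Since $w(\xi)\to\infty$, this already delivers a fractional gain in $v$. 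Splitting $L\limfunc{Op}(\varphi_{s})u=\limfunc{Op}(\varphi_{s})Lu+[L,\limfunc{Op}(\varphi_{s})]u$, the first term is controlled by $\|Lu\|_{H^{\gamma}(R)}$ for $s\le\gamma$, while the commutator must be absorbed via hypothesis (\ref{superlog-G}).

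The main obstacle is the commutator analysis in the rough setting. In the smooth case $[X_{j},\limfunc{Op}(\varphi_{s})]=\tfrac{1}{i}\limfunc{Op}(\{\sigma(X_{j}),\varphi_{s}\})$ modulo order-$(s-1)$ remainders; in the $\mathcal{C}^{2,\delta}$ setting, the Sawyer calculus \cite{Saw} furnishes the same principal symbol but with remainders lying in $\limfunc{Op}(\mathcal{C}^{1,\delta}S_{1,0}^{s-1+\epsilon})$ for arbitrarily small $\epsilon>0$, which is precisely why the class $\mathcal{O}^{-\delta/2+\epsilon}_{(-\delta/2,\delta/2)}$ containing $A_{0}$ is exactly what is needed to absorb them. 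Choosing $\varphi_{s}$ so that $\{\sigma(X_{j}),\varphi_{s}\}\approx s\,\log\langle\xi\rangle\,\{\psi,\sigma(X_{j})\}\cdot\varphi_{s}$ for a fixed $\psi$ as in hypothesis $(1c)$, the superlogarithmic estimate (\ref{superlog-G}) yields
\begin{equation*}
\bigl\|\limfunc{Op}\bigl(\{\sigma(X_{j}),\varphi_{s}\}\bigr)u\bigr\|^{2}\le s^{2}\delta\Bigl(\sum_{k}\|X_{k}v\|^{2}+\|\sqrt{a}\,\widehat\nabla v\|^{2}\Bigr)+C_{s,\delta}\|v\|^{2}+C_{s,\delta}\|\limfunc{Op}(p)u\|_{H^{1}}^{2},
\end{equation*}
which can be reabsorbed into the left-hand side of the main estimate provided $s$ is small compared to $\delta^{-1}$; this is what forces the final gain $\gamma$ to be positive but small. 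The residual term $\|\limfunc{Op}(p)u\|_{H^{1}}^{2}$ is manageable inductively because $p$ is supported outside the closure of $\Gamma$, so it involves $u$ only where previously established regularity (or ellipticity of $L$) is available.

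The quasiconformal block $\widehat\nabla^{\func{tr}}\mathbf{Q}_{p}\widehat\nabla$ is handled in parallel using the second inequality of (\ref{superlog-G}); the subordinate hypothesis guarantees that $\partial_{x_{k}}\mathbf{Q}_{p}$ is pointwise controlled by $\sqrt{\mathbf{Q}_{p}}$, so any uncontrolled derivative landing on coefficients folds back into $\|\sqrt{a}\,\widehat\nabla v\|^{2}$. The extra term $R_{1}=\sum_{k}S_{k}\Theta_{k}\circ\widehat\nabla$ contributes $\langle R_{1}v,v\rangle$, controlled by Cauchy--Schwarz since each $S_{k}$ is subunit with respect to $\mathbf{Q}_{p}$ and $\Theta_{k}$ is a multiplier of order zero. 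Iterating the resulting a priori estimate along the family $\{\varphi_{s}\}_{0\le s\le \gamma}$, and passing to the limit through the Friedrichs mollifier, gives $u\in H^{\gamma}(R)$ as required.
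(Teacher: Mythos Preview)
Your outline has the right architecture (pair $Lv$ with $v$, isolate the commutator, absorb via the superlogarithmic hypothesis, use subordination for the Grushin block), but the central step is not carried out correctly and as written the argument does not close.

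The gap is in the construction of the localizer and the form of its commutator with $X_{j}$. You write ``choosing $\varphi_{s}$ so that $\{\sigma(X_{j}),\varphi_{s}\}\approx s\,\log\langle\xi\rangle\,\{\psi,\sigma(X_{j})\}\cdot\varphi_{s}$'', but no ordinary microlocal cutoff $\varphi_{s}\in S_{1,0}^{s}$ supported in $\Gamma$ produces a logarithm here: differentiating a conic cutoff gives symbols of the \emph{same} order, not order $-1$ times $\log\langle\xi\rangle$. The logarithm is an artifact of Christ's specific \emph{nonconstant-order} symbol
\[
\lambda(x,\xi)=\langle\xi\rangle^{\gamma}e^{-N_{0}(\log\langle\xi\rangle)\psi(x,\xi)},
\]
for which $\{\log\lambda,\sigma(X_{j})\}=-N_{0}\log\langle\xi\rangle\,\{\psi,\sigma(X_{j})\}+\text{(symbol in }S_{1,0}^{0})$. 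Note the coefficient is the \emph{large} parameter $N_{0}$, not the small gain $s$. The absorption then works because the superlogarithmic hypothesis supplies an arbitrarily small $\delta$, so $N_{0}^{2}\delta$ can be made $<1$ with $N_{0}$ already fixed; your claim that ``$s$ small compared to $\delta^{-1}$'' drives the absorption, and that ``this is what forces the final gain $\gamma$ to be positive but small'', misidentifies the mechanism. In the paper the restriction $0<\gamma<\delta$ comes from the rough-calculus remainders (which lie in $\mathcal{O}_{(-\delta,\delta)}^{-\varepsilon}$), not from the superlog step.

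Two further points follow from this. First, there is no iteration ``along the family $\{\varphi_{s}\}_{0\le s\le\gamma}$'': a single operator $\Lambda=\limfunc{Op}(\lambda)$ suffices, because $\lambda$ is elliptic of order $\gamma$ near $R$ and simultaneously regularizing of order $\gamma-\theta N_{0}$ off $\Gamma$, so one a~priori estimate $\|\eta_{1}\Lambda u\|\le C\|\eta_{1}\Lambda Lu\|+C\|u\|_{H^{0}}$ already gives $u\in H^{\gamma}(R)$. Second, the residual term $\|\limfunc{Op}(p)v\|_{H^{1}}$ (with $v=\Lambda u$, not $u$) is not handled by ``previously established regularity'' but precisely by this same feature: on $\mathrm{supp}\,p$ the operator $\Lambda$ is smoothing of arbitrarily high order once $N_{0}$ is large, so $\limfunc{Op}(p)\Lambda u$ is controlled by $\|u\|_{H^{-\varepsilon}}$. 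Finally, the passage from the a~priori inequality to actual regularity is not a standard Friedrichs mollifier on $u$ but a symbol-level regularization $\lambda_{\varepsilon}(x,\xi)=r(\varepsilon\xi)^{q}\lambda(x,\xi)$, chosen so that the differential inequalities $|\partial_{\xi}^{\alpha}r_{\varepsilon}|/r_{\varepsilon}\le C_{\alpha}|\xi|^{-|\alpha|}$ hold uniformly in $\varepsilon$; this is what allows the entire commutator analysis to go through uniformly and the limit $\varepsilon\to0$ to be taken.
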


\begin{remark}
\label{Grush-conj} The term $R_{1}$ arises from the conjugation of $\widehat{%
\nabla}\cdot \mathbf{Q}_{p}\left( x\right) \widehat{\nabla}$ by $\Lambda
_{s}=(1+|\xi|^{2})^{s/2}$, needed in the bootstrap procedure. Indeed, we
have denoting $q_{ij}=(\mathbf{Q}_{p})_{ij}$ 
\begin{equation*}
\Lambda _{s}\widehat{\nabla }\cdot \mathbf{Q}_{p}\left( x\right) \widehat{%
\nabla }\Lambda _{-s}-\widehat{\nabla }\cdot \mathbf{Q}_{p}\left( x\right) 
\widehat{\nabla }=\sum_{i,j=p}^{n}[\Lambda _{s},q_{ij}]\Lambda _{-s}\partial
_{x_{i}}\partial _{x_{j}}.
\end{equation*}%
Using rough pseudodifferential calculus we have 
\begin{equation*}
\sigma ([\Lambda _{s},q_{ij}]\Lambda _{-s}\partial _{x_{i}})=-i\sum_{|\alpha
|=1}D^{\alpha }q_{ij}\frac{\xi ^{\alpha }\xi _{i}}{\left\langle \xi
\right\rangle ^{2}}=-i\sum_{k=1}^{n}\partial _{x_{k}}q_{ij}\frac{\xi _{k}\xi
_{i}}{\left\langle \xi \right\rangle ^{2}}\ \ \ \ \ \func{mod}\mathcal{O}%
_{\left( -\delta ,\delta \right) }^{-\varepsilon }.
\end{equation*}%
Denoting 
\begin{equation*}
S_{k}=\partial _{x_{k}}\mathbf{Q}_{p},\ \ (\theta _{k}(\xi ))_{i}=-i\frac{%
\xi _{k}\xi _{i}}{\left\langle \xi \right\rangle ^{2}},
\end{equation*}%
we have that $R_{1}=\sum_{k=1}^{n}S_{k}\Theta _{k}\circ \widehat{\nabla }$
has the desired properties since $\mathbf{Q}_{p}$ is subordinate, and 
\begin{equation*}
\Lambda _{s}\widehat{\nabla }\cdot \mathbf{Q}_{p}\left( x\right) \widehat{%
\nabla }\Lambda _{-s}=\widehat{\nabla }\cdot \mathbf{Q}_{p}\left( x\right) 
\widehat{\nabla }+R_{1}\ \ \ \ \ \func{mod}\mathcal{O}_{\left( -\delta
,\delta \right) }^{-\varepsilon }.
\end{equation*}
\end{remark}

We end this section on statements of the main hypoellipticity theorems, by
outlining the four steps taken in order to get to the point where we can
apply Theorems \ref{final n Grushin}, \ref{HYPSOS G} and \ref%
{main_thm_Grushin}\ to obtain our hypoellipticity Theorem \ref{HYP G}.

\subsection{Summary of the steps\label{Sub six steps}}

Consider the operator $L=\nabla A\left( \hat{x}\right) \nabla +D\left(
x\right) $ with smooth coefficients.

\begin{enumerate}
\item We first apply Theorem \ref{final n Grushin} to write $\nabla A\left( 
\hat{x}\right) \nabla =\mathbf{X}^{\func{tr}}\mathbf{X}$ plus a
quasiconformal subordinate term $\widehat{\nabla }\cdot \mathbf{Q}_{p}\left(
x\right) \widehat{\nabla }$, where the vector fields $\mathbf{X}$ belong to $%
\mathcal{C}^{2,\delta }S_{1,0}^{1}$ for some $\delta >0$, and $\mathbf{Q}%
_{p}\in C^{4,\delta }$.

\item We then use the smooth pseudodifferential calculus to write%
\begin{equation*}
\Lambda _{s}L\Lambda _{-s}=L+\widehat{\nabla }\cdot \mathbf{Q}_{p}\left(
x\right) \widehat{\nabla }+V\mathbf{X}+\mathbf{X}^{\func{tr}}U+A_{0}\left(
x,\xi \right) +R_{1}
\end{equation*}%
where the pseudodifferential operators $V\mathbf{X},U\mathbf{X}\in \func{Op}%
\mathcal{C}^{1,\delta }S_{1,0}^{1}$ , and $R_{1}\in \func{Op}\mathcal{C}%
^{1,\delta }S_{1,0}^{1}$ is subunit with respect to the quasiconformal term,
and where $A_{0}\in \mathcal{C}^{0,\delta }S_{1,0}^{0}$.

\item We\ next show that the operator $L=\nabla A\left( \hat{x}\right)
\nabla +D\left( x\right) $ is hypoelliptic if and only if for every integer $%
s\in \mathbb{Z}$, there is $\gamma =\gamma \left( \left[ s\right] \right) >0$
depending only on the integer part $\left[ s\right] $ of $s$, such that 
\begin{equation*}
u\in H^{0}\text{ and }\Lambda _{s}L\Lambda _{-s}u\in H^{\gamma }\text{
implies }u\in H^{\gamma }\text{, for }0\leq \gamma \leq 1.
\end{equation*}

\item Finally, we apply Theorem \ref{main_thm_Grushin} and Theorem \ref%
{HYPSOS G} to obtain hypoellipticity of $L$.
\end{enumerate}

\begin{remark}
Note that if we apply \emph{symbol splitting} as in \cite{Tay} to the vector
fields $\mathbf{X}$ to obtain $\mathbf{X}=\mathbf{X}^{\natural }+\mathbf{X}%
^{\flat }$ where $\mathbf{X}^{\natural }\in \func{Op}S_{1,\eta }^{1}$ and $%
\mathbf{X}^{\flat }\in \func{Op}\mathcal{C}^{2,\delta }S_{1,\eta }^{1-\eta
\left( 2+\delta \right) }$, then the subunit property of the vector field $%
\mathbf{X}$ is \emph{not} inherited by the smooth vector field $\mathbf{X}%
^{\natural }$. Indeed, the definition of $\mathbf{X}^{\natural }$ shows that
it is obtained by applying a mollification of size $2^{-j\eta }$ to a
Littlewood-Paley projection onto frequencies of size $2^{j}$, and such
mollifications are not comparable when applied to infinitely degenerate
fields, even suitably away from the degeneracies.
\end{remark}

\section{A rough variant of M. Christ's theorem}

We now prove our extension of M. Christ's hypoellipticity theorem, namely
Theorem \ref{main_thm_Grushin}, to the case of a sum of squares of \emph{%
rough} vector fields, whose sum of squares is nevertheless smooth. We will
assume the rough symbols are in the classes $\mathcal{C}^{2,\delta
}S_{1,0}^{\alpha }$, but we could just as well formulate and prove a variant
for the symbol classes $\mathcal{C}^{2,\delta }S_{\rho ,\eta }^{\alpha }$,
which we leave for the interested reader, as we will not use such a variant
in our applications. The proof of this rough theorem is accomplished by
adapting the sum of squares argument of Christ \cite{Chr} in the smooth
case. For this we begin with some preliminaries.

\subsection{Preliminaries}

Here we recall definitions and properties of symbols, G\aa rding's
inequality, parametrices, rough symbols, and wave front sets.

\subsubsection{Symbols}

We begin by recalling in $\mathbb{R}^{n}$, the definition of symbols $%
S_{\rho ,\eta }^{m}$ from Stein \cite[Chapter VI]{Ste}, the definition of
symbols $S_{\rho ,\eta }^{m,k}$ and $S_{\rho ,\eta }^{m+}$ from Christ \cite%
{Chr}, and then some results on rough versions of the symbol classes $%
S_{\rho ,\eta }^{m}$ from \cite{Saw} and \cite{Tay}. See also Treves \cite%
{Tre} for symbols defined in open sets $\Omega \subset \mathbb{R}^{n}$.

\begin{definition}
Let $a\left( x,\xi \right) $ be a smooth function on $\mathbb{R}^{n}\times 
\mathbb{R}^{n}$, $0\leq \eta <\rho \leq 1$, and $-\infty <m<\infty $.

\begin{enumerate}
\item Define $a\in S_{\rho ,\eta }^{m}$ , referred to as a \emph{symbol} of
type $\left( \rho ,\eta \right) $ and order $m$, if%
\begin{equation}
\left\vert \partial _{x}^{\alpha }\partial _{\xi }^{\beta }a\left( x,\xi
\right) \right\vert \leq C_{\alpha ,\beta }\left\langle \xi \right\rangle
^{m-\rho \left\vert \beta \right\vert +\eta \left\vert \alpha \right\vert }\
\ \ \ \ x\in \mathbb{R}^{n},\xi \in \mathbb{R}^{n},\left( \alpha ,\beta
\right) \in \mathbb{Z}_{+}^{n}\times \mathbb{Z}_{+}^{n}.  \label{def symbol}
\end{equation}

\item Define $a\in S_{\rho ,\eta }^{m,k}$ if%
\begin{equation*}
\left\vert \partial _{x}^{\alpha }\partial _{\xi }^{\beta }a\left( x,\xi
\right) \right\vert \leq C_{\alpha ,\beta }\left\langle \xi \right\rangle
^{m-\rho \left\vert \beta \right\vert +\eta \left\vert \alpha \right\vert
}\left( \log \left\langle \xi \right\rangle \right) ^{k+\left\vert \alpha
\right\vert +\left\vert \beta \right\vert }.
\end{equation*}

\item Define 
\begin{equation*}
S_{\rho ,\eta }^{m+}\equiv \dbigcap\limits_{\varepsilon >0}S_{\rho
-\varepsilon ,\eta +\varepsilon }^{m,\varepsilon }\ ,\ \ \ \ \ m\in \mathbb{R%
}.
\end{equation*}
\end{enumerate}
\end{definition}

For a symbol $a\in S_{\rho ,\eta }^{m}$, the associated pseudodifferential
operator $A:\mathcal{S}\left( \mathbb{R}^{n}\right) \rightarrow \mathcal{S}%
\left( \mathbb{R}^{n}\right) $, also denoted by $A=\limfunc{Op}a$, is
defined on the space of rapidly decreasing functions $\mathcal{S}\left( 
\mathbb{R}^{n}\right) $ on $\mathbb{R}^{n}$ by%
\begin{equation}
Au\left( x\right) =\frac{1}{\left( 2\pi \right) ^{n}}\int_{\mathbb{R}%
^{n}}e^{ix\cdot \xi }a\left( x,\xi \right) \widehat{u}\left( \xi \right)
d\xi ,\ \ \ \ \ x\in \mathbb{R}^{n}.  \label{def op symbol}
\end{equation}%
It follows with some work (see e.g. \cite{Ste}) that $\limfunc{Op}a:\mathcal{%
S}\left( \mathbb{R}^{n}\right) \rightarrow \mathcal{S}\left( \mathbb{R}%
^{n}\right) $ is continuous, and moreover, if $a_{k}$ converges pointwise to 
$a$ on $\mathbb{R}^{n}$, and (\ref{def symbol}) holds for $a=a_{k}$
uniformly in $k$, then $a\in S_{\rho ,\eta }^{m}$ as well. By duality $%
\limfunc{Op}a:\mathcal{S}^{\prime }\left( \mathbb{R}^{n}\right) \rightarrow 
\mathcal{S}^{\prime }\left( \mathbb{R}^{n}\right) $ is a continuous map from
the space of tempered distributions $\mathcal{S}^{\prime }\left( \mathbb{R}%
^{n}\right) $ to itself, and the asymptotic formulas for adjoints and
compositions holds without restriction, e.g. if $a\in S_{\rho ,\eta
}^{m_{1}} $ and $b\in S_{\rho ,\eta }^{m_{2}}$, then $\limfunc{Op}a\circ 
\limfunc{Op}b=\limfunc{Op}\left( a\circ b\right) $ where for all $M\in 
\mathbb{N}$,%
\begin{eqnarray*}
a\circ b &=&\sum_{\ell =0}^{M}\frac{1}{i^{\ell }\ell !}\widehat{\nabla }%
_{\xi }^{\ell }a\cdot \nabla _{x}^{\ell }b+E_{M}; \\
\text{with }E_{M} &\in &S_{\rho ,\eta }^{m_{1}+m_{2}-M-1}.
\end{eqnarray*}%
It follows immediately from the definitions that the asymptotic formulas for
adjoints and compositions extend to the symbol classes $S_{\rho ,\eta }^{m+}$%
. For example, by uniqueness of the expansions, we have 
\begin{equation*}
E_{M}\in S_{\rho -\varepsilon ,\eta +\varepsilon }^{m_{1}+\varepsilon
+m_{2}+\varepsilon -M-1}\subset S_{\rho -2\varepsilon ,\eta +2\varepsilon
}^{m_{1}+m_{2}-M-1,2\varepsilon }
\end{equation*}%
for each $\varepsilon >0$, and so%
\begin{equation*}
E_{M}\in \dbigcap\limits_{\varepsilon >0}S_{\rho -2\varepsilon ,\eta
+2\varepsilon }^{m_{1}+m_{2}-M-1,2\varepsilon }=S_{\rho ,\eta
}^{m_{1}+m_{2}-M-1+}.
\end{equation*}%
Now $S_{\rho ,\eta }^{m+}\subset S_{\rho ,\eta }^{m,k}$, and it turns out
that for our purposes, we apply the pseudodifferential calculus to the
symbol classes $S_{\rho ,\eta }^{m+}$, as well as to the classes $S_{\rho
,\eta }^{m,k}$ that arise naturally from the hypotheses of the theorems. We
will not necessarily make explicit mention of this distinction in the sequel
however.

\subsubsection{Parametrices}

Let $a\left( x,\xi \right) \in S_{1,\eta }^{m}$ be elliptic of order $m$,
i.e. there are strictly positive continuous functions $\rho \left( x\right) $
and $c\left( x\right) $ in $\Omega $ such that the symbol $a\left( x,\xi
\right) $ satisfies%
\begin{equation*}
c\left( x\right) \left\vert \xi \right\vert ^{m}\leq \left\vert a\left(
x,\xi \right) \right\vert ,\ \ \ \ \ \xi \in \mathbb{R}^{n}\text{ with }%
\left\vert \xi \right\vert \geq \rho \left( x\right) ,x\in \Omega .
\end{equation*}

\begin{proposition}
\label{para}Let $a\left( x,\xi \right) \in S_{1,\eta }^{m}\left( \Omega
\right) $. If $a\left( x,\xi \right) $ is elliptic of order $m$, then there
is $b\left( x,\xi \right) \in S_{1,\eta }^{-m}$ such that $a\circ b=1$.
Conversely, if there is $b\left( x,\xi \right) \in S_{1,\eta }^{-m}$ such
that $a\circ b=1$, then $a\left( x,\xi \right) $ is elliptic of order $m$.
\end{proposition}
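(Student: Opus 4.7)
The plan is the classical parametrix construction, adapted to keep track of the $(1,\eta)$-type loss, together with a leading-symbol argument for the converse. I will interpret $a \circ b = 1$ in the usual way, namely $a \circ b - 1 \in S^{-\infty}$. First I would fix a smooth cutoff $\chi(x,\xi) \in S_{1,\eta}^{0}$ which equals $1$ for $|\xi| \geq 2\rho(x)$ and vanishes for $|\xi| \leq \rho(x)$, and set $b_{0}(x,\xi) := \chi(x,\xi)/a(x,\xi)$ on $\supp \chi$ (and $0$ elsewhere). The ellipticity hypothesis $|a| \gtrsim \langle \xi \rangle^{m}$ on $\supp \chi$ gives $|b_{0}| \lesssim \langle \xi \rangle^{-m}$, and differentiating $1/a$ via the Fa\`{a} di Bruno formula one shows $b_{0} \in S_{1,\eta}^{-m}$: each factor $\partial_{x}^{\alpha'}\partial_{\xi}^{\beta'}a$ contributes $\langle \xi \rangle^{m - |\beta'| + \eta|\alpha'|}$, and combined with $|a|^{-(|\pi|+1)} \lesssim \langle\xi\rangle^{-(|\pi|+1)m}$ the total weight across each partition collapses to exactly $\langle \xi \rangle^{-m - |\beta| + \eta|\alpha|}$.

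Next I would iterate. Applying the composition formula recalled in the excerpt, the $\ell = 0$ term of $a \circ b_{0}$ equals $\chi$, which differs from $1$ only on a $\xi$-compact set (absorbed into $S^{-\infty}$), while each term $\frac{1}{i^{\ell}\ell!}\partial_{\xi}^{\ell}a \cdot \partial_{x}^{\ell}b_{0}$ with $\ell \geq 1$ lies in $S_{1,\eta}^{(m-\ell)+(-m+\eta\ell)} = S_{1,\eta}^{-\ell(1-\eta)}$. Hence $a \circ b_{0} = 1 + r_{1}$ with $r_{1} \in S_{1,\eta}^{-(1-\eta)}$, and defining recursively $b_{k+1} := -b_{0}\, r_{k+1}$ and $r_{k+2} := a \circ (b_{0} + \cdots + b_{k+1}) - 1$, a direct check using $ab_{0} \equiv 1$ modulo $S^{-\infty}$ yields $b_{k+1} \in S_{1,\eta}^{-m - (k+1)(1-\eta)}$ and $r_{k+2} \in S_{1,\eta}^{-(k+2)(1-\eta)}$. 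A standard Borel-type asymptotic summation (see e.g.\ \cite{Ste}) then produces $b \in S_{1,\eta}^{-m}$ with $b \sim \sum_{k} b_{k}$ and $a \circ b - 1 \in S^{-\infty}$, proving the forward direction. The converse is quick: the $\ell = 0$ leading term of $a \circ b = 1$ gives $ab - 1 \in S_{1,\eta}^{-(1-\eta)}$, so $|ab - 1| \leq 1/2$ for $|\xi|$ large (uniformly on compact sets in $x$); combined with $|b| \leq C_{0}\langle \xi \rangle^{-m}$ this forces $|a(x,\xi)| \geq (2C_{0})^{-1}\langle \xi \rangle^{m}$ there, which is the required ellipticity of order $m$.

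The main obstacle is the Fa\`{a} di Bruno verification that the reciprocal of an elliptic symbol of order $m$ is again a symbol of order $-m$ in $S_{1,\eta}$, since the $\eta$-loss on $x$-derivatives must be tracked carefully across all partitions and weighed against the gain coming from the powers of $a$ in the denominator. A secondary nuisance is that the composition formula in the excerpt provides only finite-order remainders $E_{M} \in S_{1,\eta}^{m_{1}+m_{2}-M-1}$ rather than an $S^{-\infty}$ remainder directly, so in the asymptotic summation one must let $M$ grow with $k$ so that both the tails $\sum_{k \geq K} b_{k}$ and the accumulated composition errors collapse into $S^{-\infty}$; this is routine once recognised, and the remainder of the argument is mechanical symbolic calculus.
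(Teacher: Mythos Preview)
Your proposal is correct and follows essentially the same classical parametrix construction as the paper: the paper writes the recursion directly for the coefficients $b_{j}$ (the Tr\`{e}ves-style formula $b_{j}a = -\sum_{1\leq|\alpha|\leq j}\frac{1}{\alpha!}\partial_{\xi}^{\alpha}a\,D_{x}^{\alpha}b_{j-|\alpha|}$) and uses a sequence of cutoffs $\chi_{j}$ for the Borel summation, whereas you use the equivalent Neumann-series presentation $b_{k+1}=-b_{0}r_{k+1}$ with a single cutoff and are somewhat more explicit about the $(1-\eta)$ gain per step. The converse arguments are identical.
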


\begin{proof}
Determine recursively symbols $b_{j}$ from the relations%
\begin{eqnarray}
b_{0}\left( x,\xi \right) a\left( x,\xi \right) &=&1,  \label{recurse} \\
b_{j}\left( x,\xi \right) a\left( x,\xi \right) &=&-\sum_{1\leq \left\vert
\alpha \right\vert \leq j}\frac{1}{\alpha !}\partial _{\xi }^{\alpha
}a\left( x,\xi \right) D_{x}^{\alpha }b_{j-\left\vert \alpha \right\vert
}\left( x,\xi \right) ,\ \ \ \ \ j\geq 1,  \notag
\end{eqnarray}%
which make sense only for $\left\vert \xi \right\vert \geq \rho \left(
x\right) $. The first three such symbols are given by%
\begin{eqnarray*}
b_{0}\left( x,\xi \right) &=&\frac{1}{a\left( x,\xi \right) }, \\
b_{1}\left( x,\xi \right) &=&-b_{0}\left( x,\xi \right) \sum_{i=1}^{n}\frac{%
\partial }{\partial \xi _{i}}a\left( x,\xi \right) \frac{1}{i}\frac{\partial 
}{\partial x_{i}}b_{0}\left( x,\xi \right) =-\frac{1}{i}b_{0}\left( x,\xi
\right) \nabla _{\xi }a\left( x,\xi \right) \cdot \nabla _{x}b_{0}\left(
x,\xi \right) , \\
b_{2}\left( x,\xi \right) &=&-b_{0}\left( x,\xi \right) \sum_{i=1}^{n}\frac{%
\partial }{\partial \xi _{i}}a\left( x,\xi \right) \frac{1}{i}\frac{\partial 
}{\partial x_{i}}b_{1}\left( x,\xi \right) -b_{0}\left( x,\xi \right)
\sum_{\left\vert \alpha \right\vert =2}\frac{1}{\alpha !}\partial _{\xi
}^{\alpha }a\left( x,\xi \right) D_{x}^{\alpha }b_{0}\left( x,\xi \right) \\
&=&-\frac{1}{i}b_{0}\left( x,\xi \right) \nabla _{\xi }a\left( x,\xi \right)
\cdot \nabla _{x}b_{1}\left( x,\xi \right) -b_{0}\left( x,\xi \right) \frac{1%
}{2!}\nabla _{\xi }^{2}a\left( x,\xi \right) \cdot \nabla
_{x}^{2}b_{0}\left( x,\xi \right) .
\end{eqnarray*}%
To deal with the requirement that $\left\vert \xi \right\vert \geq \rho
\left( x\right) $, we select a monotone increasing sequence of continuous
functions $\rho _{j+1}\left( x\right) >\rho _{j}\left( x\right) >\rho \left(
x\right) $ and a sequence of smooth cutoff functions $\chi _{j}\left( x,\xi
\right) \in C^{\infty }\left( \Omega \times \mathbb{R}^{n}\right) $
satisfying%
\begin{equation*}
\chi _{j}\left( x,\xi \right) =\left\{ 
\begin{array}{ccc}
0 & \text{ if } & \left\vert \xi \right\vert \leq \rho _{j}\left( x\right)
\\ 
1 & \text{ if } & \left\vert \xi \right\vert \leq 2\rho _{j}\left( x\right)%
\end{array}%
\right. .
\end{equation*}%
One can easily prove by induction on $j$ that $\chi _{j}b_{j}\in
S^{-m-j}\left( \Omega \right) $, and moreover that for carefully chosen such 
$\chi _{j}$ the series\ $\sum_{j=1}^{\infty }\chi _{j}b_{j}$ converges in $%
S^{-m}\left( \Omega \right) $ to a symbol $b$ satisfying $a\circ b=1$.
Indeed, if $\left\{ K_{j}\right\} _{j=1}^{\infty }$is a standard exhausting
sequence of compact sets for $\Omega $, and if the constants $C_{\alpha
,\beta }^{\left( j\right) }\left( K_{i}\right) $ satisfy%
\begin{equation*}
\left\vert \partial _{\xi }^{\alpha }\partial _{x}^{\beta }\left( \chi
_{j}b_{j}\right) \right\vert \leq C_{\alpha ,\beta }^{\left( j\right)
}\left( K_{i}\right) \left\vert \xi \right\vert ^{-m-j-\left\vert \alpha
\right\vert },\ \ \ \ \ \text{for }x\in K_{i},\xi \in \mathbb{R}%
^{n}\setminus \left\{ 0\right\} ,
\end{equation*}%
then we need only require in addition that $\rho _{j}\left( x\right) \geq
2\sup_{i\leq j,\left\vert \alpha +\beta \right\vert \leq j}C_{\alpha ,\beta
}^{\left( j\right) }\left( K_{i}\right) ^{\frac{1}{j}}$.

The converse is an easy exercise using only the consequence%
\begin{equation*}
a\left( x,\xi \right) b\left( x,\xi \right) -1\in S^{-1}\left( \Omega
\right) ,
\end{equation*}%
which implies that for every compact set $K\subset \Omega $, there is a
constant $C_{K}$ such that%
\begin{equation*}
\left\vert a\left( x,\xi \right) b\left( x,\xi \right) -1\right\vert \leq
C_{K}\frac{1}{1+\left\vert \xi \right\vert }.
\end{equation*}
\end{proof}

\begin{corollary}
Let $A$ belong to $S_{1,0}^{m}\left( \Omega \right) $. Then $A$ is elliptic
of order $m$ if and only if there is $B\in S_{1,0}^{-m}\left( \Omega \right) 
$ with 
\begin{equation*}
AB=BA=I\ \ \ \ \ \func{mod}S^{-\infty }\left( \Omega \right) ,
\end{equation*}%
where $S^{-\infty }\left( \Omega \right) =\dbigcap\limits_{m\in \mathbb{R}%
}S_{1,0}^{-m}\left( \Omega \right) $.
\end{corollary}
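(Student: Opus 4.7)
The plan is to derive the corollary directly from Proposition \ref{para} by a standard two-sided parametrix argument in the $S_{1,0}$ calculus. For the forward implication, I would apply Proposition \ref{para} to the elliptic symbol $a\in S_{1,0}^{m}(\Omega)$, obtaining $b_{1}\in S_{1,0}^{-m}(\Omega)$ with $a\circ b_{1}=1$, i.e.\ $AB_{1}=I\ \mathrm{mod}\ \limfunc{Op}S^{-\infty}(\Omega)$. The formal adjoint $A^{\ast}$ is again elliptic of order $m$, since its principal symbol is $\overline{a}$, which satisfies the same lower bound as $a$; a second application of Proposition \ref{para} yields $C\in S_{1,0}^{-m}(\Omega)$ with $A^{\ast}C=I\ \mathrm{mod}\ \limfunc{Op}S^{-\infty}$, and then $B_{2}:=C^{\ast}\in S_{1,0}^{-m}$ satisfies $B_{2}A=I\ \mathrm{mod}\ \limfunc{Op}S^{-\infty}$.

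Next I would check that the left and right parametrices coincide modulo smoothing operators by the standard three-line computation
\[
B_{2}=B_{2}(AB_{1})+B_{2}(I-AB_{1})=(B_{2}A)B_{1}+B_{2}(I-AB_{1})\equiv B_{1}\ \mathrm{mod}\ \limfunc{Op}S^{-\infty},
\]
which is legitimate because $S^{-\infty}(\Omega)$ is a two-sided ideal under composition with any operator in $\limfunc{Op}S_{1,0}^{\ast}$. Taking $B:=B_{1}\in S_{1,0}^{-m}(\Omega)$ then produces a genuine two-sided parametrix $AB=BA=I\ \mathrm{mod}\ \limfunc{Op}S^{-\infty}(\Omega)$.

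For the converse, suppose $B\in S_{1,0}^{-m}(\Omega)$ satisfies $AB=I\ \mathrm{mod}\ \limfunc{Op}S^{-\infty}$. By the composition formula in $S_{1,0}^{\ast}$ one has $a\circ b=ab+r$ with $r\in S_{1,0}^{-1}$, so $ab-1\in S_{1,0}^{-1}$, and the argument at the end of the proof of Proposition \ref{para} gives a constant $C_{K}$ for each compact $K\subset\Omega$ with $|a(x,\xi)b(x,\xi)-1|\leq C_{K}(1+|\xi|)^{-1}$. For $|\xi|\geq 2C_{K}$ this yields $|a(x,\xi)b(x,\xi)|\geq 1/2$, and combining this with the bound $|b(x,\xi)|\leq C_{K}^{\prime}\langle\xi\rangle^{-m}$ (from $b\in S_{1,0}^{-m}$) gives $|a(x,\xi)|\geq (2C_{K}^{\prime})^{-1}\langle\xi\rangle^{m}$ for all sufficiently large $|\xi|$, which is precisely the ellipticity condition with threshold $\rho(x)=2C_{K(x)}$ built from a locally finite exhaustion of $\Omega$.

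I expect no genuine obstacle here: the corollary is an essentially formal consequence of Proposition \ref{para} together with the adjoint calculus in $S_{1,0}^{\ast}$. The only point that deserves verification is that the asymptotic sum produced in Proposition \ref{para} gives an identity $a\circ b=1$ that is exact \emph{modulo $S^{-\infty}$}, not merely modulo $S^{-N}$ for each fixed $N$; but this is standard and is the content of the Borel-type summation step implicit in the construction of $b=\sum_{j}\chi_{j}b_{j}$ in the proof above.
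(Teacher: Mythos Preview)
Your argument is correct and is precisely the standard two-sided parametrix construction. The paper itself does not give a proof of this corollary; it is stated immediately after Proposition \ref{para} as a direct consequence, with the details left implicit. Your proof supplies exactly those details: Proposition \ref{para} gives a right parametrix, the adjoint trick produces a left parametrix, the usual $B_{2}=B_{2}(AB_{1})=(B_{2}A)B_{1}$ computation shows they agree modulo $S^{-\infty}$, and the converse is the same estimate already carried out at the end of the proof of Proposition \ref{para}.

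One minor remark: rather than passing through the adjoint, you could equally well observe that the right parametrix $B_{1}$ is itself elliptic of order $-m$ (its principal symbol is $1/a$), apply Proposition \ref{para} to $B_{1}$ to obtain $A'$ with $B_{1}A'=I$ modulo $S^{-\infty}$, and then conclude $A'\equiv A(B_{1}A')\equiv (AB_{1})A'\equiv A$ modulo $S^{-\infty}$, so that $B_{1}A=I$ modulo $S^{-\infty}$ as well. This avoids invoking the adjoint calculus but is logically equivalent to your route.
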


\subsubsection{Rough symbols}

The following definitions are taken from \cite{Tay} and \cite{Saw}.

\begin{definition}
A symbol $\sigma :\mathbb{R}^{n}\times \mathbb{R}^{n}\rightarrow \mathbb{R}$
belongs to the rough symbol class $\mathcal{C}^{M}S_{\rho ,\delta }^{m}$
(where $M\in \mathbb{Z}_{+}$ and $0\leq \rho ,\delta \leq 1$) if for all
multiindices $\alpha ,\beta $ with $\left\vert \alpha \right\vert \leq M$,
there are constants $C_{\alpha ,\beta }$ such that 
\begin{equation*}
\left\vert D_{x}^{\alpha }D_{\xi }^{\beta }\sigma \left( x,\xi \right)
\right\vert \leq C_{\alpha ,\beta }\left( 1+\left\vert \xi \right\vert
\right) ^{m+\delta \left\vert \alpha \right\vert -\rho \left\vert \beta
\right\vert },\ \ \ \ \ x\in \mathbb{R}^{n},\xi \in \mathbb{R}^{n}.
\end{equation*}%
If $0<\mu <1$, then $\sigma \in \mathcal{C}^{M+\mu }S_{\rho ,\delta }^{m}$
if in addition we have 
\begin{equation*}
\left\vert D_{\xi }^{\beta }\sigma \left( x+h,\xi \right) -\sum_{\ell =0}^{M}%
\frac{\left( h\cdot \nabla _{x}\right) ^{\ell }}{\ell !}D_{\xi }^{\beta
}\sigma \left( x,\xi \right) \right\vert \leq C_{M,\beta }\left\vert
h\right\vert ^{M+\mu }\left( 1+\left\vert \xi \right\vert \right) ^{m+\delta
\left( M+\mu \right) -\rho \left\vert \beta \right\vert },\ \ \ \ \ x\in 
\mathbb{R}^{n},\xi \in \mathbb{R}^{n}.
\end{equation*}
\end{definition}

\begin{definition}
A symbol $\sigma :\mathbb{R}^{n}\times \mathbb{R}^{n}\rightarrow \mathbb{R}$
belongs to the operator class $\mathcal{O}_{I}^{m}$ if its associated
operator 
\begin{equation*}
\left( \limfunc{Op}\sigma \right) u\left( x\right) =\frac{1}{\left( 2\pi
\right) ^{n}}\int_{\mathbb{R}^{n}}e^{ix\cdot \xi }\sigma \left( x,\xi
\right) \widehat{u}\left( \xi \right) d\xi ,\ \ \ \ \ x\in \mathbb{R}^{n},
\end{equation*}%
admits a bounded extension from $H_{p,\limfunc{comp}}^{s+m}$ to $H_{p,%
\limfunc{loc}}^{s}$ (resp. $\Lambda _{p,\limfunc{comp}}^{s+m}$ to $\Lambda
_{p,\limfunc{loc}}^{s}$) for $s\in I$ (resp. $s\in I\cap \left[ 0,\infty
\right) $ and all $1<p<\infty $.\newline
The symbol $\sigma $ belongs to the operator class $\overline{\mathcal{O}}%
_{I}^{m}$ if in addition $\limfunc{Op}\sigma $ is bounded from $\Lambda _{p,%
\limfunc{comp}}^{t+m}$ to $\Lambda _{p,\limfunc{loc}}^{t}$ where $t$ is the
right endpoint of the interval $I$.\newline
Here the subscript $\limfunc{comp}$ means compactly supported distributions
in the space, while the subscript $\limfunc{loc}$ means distributions
locally in the space.
\end{definition}

The following result of Bourdaud is well known, see also \cite[Section 2.1]%
{Tay} and \cite[Theorem 3]{Saw}.

\begin{theorem}[{\protect\cite[Bou]{Bou}}]
\label{rough bound}For all real $m$, and all $\nu >0$ and $0\leq \delta <1$
we have%
\begin{equation*}
\mathcal{C}^{\nu }S_{1,\delta }^{m}\subset \overline{\mathcal{O}}_{\left(
-\left( 1-\delta \right) \nu ,\nu \right) }^{m}.
\end{equation*}
\end{theorem}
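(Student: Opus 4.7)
The plan is to prove this by the symbol smoothing technique of Bony and Meyer. First I would decompose the rough symbol as $\sigma = \sigma^{\#} + \sigma^{\flat}$, where the smoothed part $\sigma^{\#}$ is a genuine smooth symbol of type $(1,\eta)$ for a fixed $\eta \in (\delta, 1)$ to be chosen later, and the remainder $\sigma^{\flat}$ retains the full $\mathcal{C}^{\nu}$ regularity in $x$ but has strictly lower order $m - (\eta - \delta)\nu$. This is achieved via a Littlewood--Paley partition $\{\psi_{j}\}_{j \geq 0}$ in $\xi$, writing $\sigma(x,\xi) = \sum_{j} \sigma(x,\xi)\psi_{j}(\xi)$, and then convolving each piece in the $x$ variable with a mollifier of width $2^{-j\eta}$. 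Routine estimates on H\"older norms under mollification yield $\sigma^{\#} \in S_{1,\eta}^{m}$ and $\sigma^{\flat} \in \mathcal{C}^{\nu} S_{1,\eta}^{m - (\eta - \delta)\nu}$.

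For the smooth part $\sigma^{\#}$, the classical pseudodifferential calculus for type $(1,\eta)$ symbols with $\eta < 1$ (Calder\'on--Vaillancourt together with interpolation on $L^{p}$ and H\"older spaces) provides boundedness from $H_{p}^{s+m}$ to $H_{p}^{s}$ and from $\Lambda_{p}^{s+m}$ to $\Lambda_{p}^{s}$ for every $s \in \mathbb{R}$, so this piece places no restriction on the admissible range of $s$.

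For the rough remainder $\sigma^{\flat}$, I would iterate the decomposition: each iteration drops the order by $(\eta - \delta)\nu$ while preserving the $\mathcal{C}^{\nu} S_{1,\eta}^{\bullet}$ structure. After finitely many iterations, the order becomes sufficiently negative that the operator is realized by a kernel with classical derivatives, at which point boundedness on $\Lambda^{s}$ for $0 \leq s \leq \nu$ reduces to the elementary fact that $\mathcal{C}^{\nu}$ acts boundedly by multiplication on $\Lambda^{s}$ throughout this range, provided any deficit at the endpoint $s = \nu$ is absorbed into the strictly negative order of the remainder by taking $\eta$ close to $1$. The Sobolev boundedness for $-(1-\delta)\nu < s < 0$ is then obtained by duality, using that the formal adjoint of a symbol in $\mathcal{C}^{\nu} S_{1,\delta}^{m}$ lies in $\mathcal{C}^{(1-\delta)\nu} S_{1,\delta}^{m}$, since passing an $x$-derivative through the adjoint asymptotic expansion costs a factor of $\langle \xi \rangle^{\delta}$ and thus effectively trades $\delta$ units of spatial regularity.

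The main obstacle will be to execute the duality argument rigorously inside the rough symbol category and to confirm the endpoint $s = \nu$ required by the $\overline{\mathcal{O}}$ refinement. Control at this endpoint cannot come from the algebra property of $\mathcal{C}^{\nu}$ alone, but requires the strictly negative order of the iterated remainder together with the sharp choice $\eta \to 1^{-}$; the lower endpoint $-(1-\delta)\nu$ is then symmetric to this via the adjoint calculation above, and produces precisely the asymmetric interval $\bigl(-(1-\delta)\nu,\nu\bigr)$ claimed in the theorem.
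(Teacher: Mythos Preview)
The paper does not prove this theorem; it is quoted from Bourdaud with pointers to Taylor and Sawyer for alternative treatments, so there is no in-paper argument to compare against.

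Your symbol-smoothing strategy is standard and handles the range $0 \leq s \leq \nu$ correctly. The gap is in your treatment of negative $s$. You assert that the formal adjoint of a symbol in $\mathcal{C}^{\nu} S_{1,\delta}^{m}$ lies in $\mathcal{C}^{(1-\delta)\nu} S_{1,\delta}^{m}$, but this is false: the adjoint symbol is given by the oscillatory integral $p^{*}(x,\xi) = e^{iD_{x}\cdot D_{\xi}}\bar p(x,\xi)$, whose asymptotic expansion consumes $x$-derivatives of $p$. When $\nu < 1$ you cannot take even one such derivative, and for larger $\nu$ the remainder after finitely many terms does not land in any H\"older-class symbol space of the form you name. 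More basically, duality transfers boundedness of $T$ on $H^{s+m}\to H^{s}$ to boundedness of $T^{*}$ on $H^{-s}\to H^{-s-m}$; since rough pseudodifferential operators are not self-adjoint and their adjoints are not again rough symbols of the same kind, this gives no information about $T$ on negative-index spaces, and the argument does not close.

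Bourdaud's proof, and Taylor's exposition, obtain the full interval $\bigl(-(1-\delta)\nu,\nu\bigr)$ directly by a Littlewood--Paley decomposition of both the symbol and the function, reducing matters to elementary symbols and Schur-type summation. The factor $(1-\delta)$ at the lower endpoint arises because the H\"older seminorm in $x$ is allowed to grow like $\langle\xi\rangle^{\delta\nu}$, so the spatial Fourier coefficients of the symbol on the $j$-th frequency shell decay only like $2^{-j(1-\delta)\nu}$; it is this decay rate, not any adjoint computation, that fixes the admissible negative Sobolev index.
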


\subsubsection{Rough pseudodifferential calculus}

While symbol smoothing is a very effective and relatively simple tool for
use in elliptic and finite type situations, it fails to sufficiently
preserve the subunit property of vector fields in the infinitely degenerate
regime. For this reason we will instead use the pseudodifferential calculus
from \cite{Saw}, to which we now turn.

If $\sigma \in \mathcal{C}^{\nu }S_{1,\delta _{1}}^{m_{1}}$ and $\tau \in 
\mathcal{C}^{M+\mu +\nu }S_{1,\delta _{2}}^{m_{2}}$ have compact support in $%
\mathbb{R}^{n}\times \mathbb{R}^{n}$, then the composition $\limfunc{Op}%
\sigma \circ \limfunc{Op}\tau $ of the operators $\limfunc{Op}\sigma $ and $%
\limfunc{Op}\tau $ equals the operator $\limfunc{Op}\left( \sigma \circ \tau
\right) $ where%
\begin{equation*}
\left( \sigma \circ \tau \right) \left( x,\eta \right) \equiv \int_{\mathbb{R%
}^{n}}\int_{\mathbb{R}^{n}}e^{i\left( x-y\right) \cdot \left( \xi -\eta
\right) }\sigma \left( x,\xi \right) \tau \left( y,\eta \right) dyd\xi ,
\end{equation*}%
and the double integral on the right hand side is absolutely convergent
under the compact support assumption, thus justifying the claim. Given such
symbols without the assumption of compact support, we may then consider
instead the symbols $\sigma _{\varepsilon }$ and $\tau _{\varepsilon }$
where $a_{\varepsilon }\left( x,\xi \right) \equiv \psi \left( \varepsilon
x,\varepsilon \xi \right) a\left( x,\xi \right) $. Provided $\psi \in
C_{c}^{\infty }\left( \mathbb{R}^{n}\times \mathbb{R}^{n}\right) $ is $1$ on
the unit ball, the symbols $a_{\varepsilon }$ are uniformly in the same
symbol class as $a$, and hence the above formula persists in the limit when
the operators are restricted to acting on the space $\mathcal{S}$ of rapidly
decreasing functions. Of course it may happen that the resulting symbol $%
\sigma \circ \tau $ fails to belong to any reasonable rough symbol class $%
\mathcal{C}^{M+\mu }S_{\rho ,\delta }^{m}$ - see \cite[Subsection 5.3]{Saw}.
Nevertheless, we have the following useful symbol expansion of $\sigma \circ
\tau $ valid up to an error operator in an appropriate class $\overline{%
\mathcal{O}}_{I}^{m}$.

\begin{theorem}
\label{Saw calc}(\cite[Theorem 4]{Saw}) Suppose $\sigma \in \mathcal{C}^{\nu
}S_{1,\delta _{1}}^{m_{1}}$ and $\tau \in \mathcal{C}^{M+\mu +\nu
}S_{1,\delta _{2}}^{m_{2}}$ where $M$ is a nonnegative integer, $0<\mu
,\delta _{1},\delta _{2}<1$, $\nu >0$ and $M+\mu \geq m_{1}\geq 0$. Let $%
\delta \equiv \max \left\{ \delta _{1},\delta _{2}\right\} $. Then%
\begin{eqnarray*}
\sigma \circ \tau &=&\sum_{\ell =0}^{M}\frac{1}{i^{\ell }\ell !}\nabla _{\xi
}^{\ell }\sigma \cdot \nabla _{x}^{\ell }\tau +E; \\
E &\in &\mathcal{O}_{\left( -\left( 1-\delta \right) \nu ,\nu \right)
}^{m_{1}+m_{2}+\left( M+\mu \right) \left( \delta _{2}-1\right) +\varepsilon
},\ \ \ \ \ \text{for every }\varepsilon >0.
\end{eqnarray*}
\end{theorem}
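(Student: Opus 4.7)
The plan is to establish the asymptotic expansion by Taylor expanding the rougher factor $\tau$ in its $x$--variable about the point where $\sigma$ is evaluated, then using integration by parts in $\xi$ to convert the resulting polynomial weights $(y-x)^\alpha$ into $\xi$--derivatives falling on $\sigma$; the remainder is then shown to lie in the claimed operator class by Littlewood--Paley analysis of its kernel combined with Bourdaud's theorem (Theorem \ref{rough bound}).

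First I would start from the oscillatory integral
\[
(\sigma \circ \tau)(x,\eta) = \iint e^{i(x-y)\cdot(\xi-\eta)}\, \sigma(x,\xi)\, \tau(y,\eta)\, dy\, d\xi,
\]
making the integral absolutely convergent via the cutoffs $\sigma_\varepsilon, \tau_\varepsilon$ discussed in the paragraph preceding the theorem, so that every manipulation is justified and all constants uniform. Taylor expanding $\tau(y,\eta)$ in $y$ about $y=x$ to order $M$,
\[
\tau(y,\eta) = \sum_{|\alpha| \leq M} \frac{(y-x)^\alpha}{\alpha!}\, \partial_x^\alpha \tau(x,\eta) + R_M(x,y,\eta),
\]
and using the identity $(x-y)^\alpha e^{i(x-y)\cdot(\xi-\eta)} = (-D_\xi)^\alpha e^{i(x-y)\cdot(\xi-\eta)}$ followed by integration by parts in $\xi$, each term with $|\alpha|\leq M$ collapses to $\frac{1}{i^{|\alpha|}\alpha!}\, \partial_\xi^\alpha \sigma(x,\eta)\cdot \partial_x^\alpha \tau(x,\eta)$, producing the stated main expansion. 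The hypothesis $M+\mu \geq m_1 \geq 0$ is what guarantees that $\sigma$ has enough $\xi$--derivatives of negative enough order for each of these main terms to remain a legitimate rough symbol.

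The hard part, and the main obstacle, is controlling the remainder $E$ arising from $R_M$. Because $\tau$ is only $\mathcal{C}^{M+\mu+\nu}$ in $x$, the integral form of $R_M$ provides the estimate $|R_M(x,y,\eta)| \lesssim |y-x|^{M+\mu}\langle \eta\rangle^{m_2+\delta_2(M+\mu)}$ together with a $\nu$--H\"{o}lder modulus at the top. My plan is to decompose the $\eta$--variable dyadically via Littlewood--Paley projectors $\Delta_j$, write $E = \sum_j E_j$ with each $E_j$ supported at frequency $\approx 2^j$, and estimate the Schwartz kernel of $\limfunc{Op}(E_j)$ by passing the $|y-x|^{M+\mu}$ factor through the $\xi$--integration as $2^{-j(M+\mu)}$ times a bounded convolution kernel. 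Summing the geometric series in $j$, this costs exactly the order $(M+\mu)(\delta_2-1)$ loss claimed, with an extra $\varepsilon$ absorbing the logarithmic deficit inherent in non-smooth symbol classes. Finally, invoking Theorem \ref{rough bound} converts these kernel estimates into the operator class membership $E\in \mathcal{O}_{(-(1-\delta)\nu,\nu)}^{m_1+m_2+(M+\mu)(\delta_2-1)+\varepsilon}$, the interval endpoints being determined precisely by the embedding $\mathcal{C}^\nu S_{1,\delta}^m \subset \overline{\mathcal{O}}_{(-(1-\delta)\nu,\nu)}^m$.

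To conclude, I would verify that both the expansion and the error bound survive the passage $\varepsilon \to 0$ in the approximating symbols $\sigma_\varepsilon, \tau_\varepsilon$, using the uniformity of the constants and continuity of $\limfunc{Op}(\sigma)\circ \limfunc{Op}(\tau)$ on $\mathcal{S}$ established in Section 2. The anticipated subtleties are bookkeeping, keeping track of which derivatives of $\tau$ are available when Taylor expanding (hence the need for the extra $\nu$ regularity beyond $M+\mu$), and arranging the dyadic kernel estimates so that the H\"{o}lder modulus at the top of the Taylor expansion translates cleanly into the correct Sobolev/Besov gain rather than merely an $L^2\to L^2$ bound.
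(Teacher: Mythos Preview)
The paper does not prove this theorem at all: it is simply quoted verbatim from \cite[Theorem~4]{Saw} and invoked as a black box throughout Section~3. So there is no ``paper's own proof'' to compare against.

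That said, your outline is the standard route for results of this type and is consistent with what one expects the argument in \cite{Saw} to look like. The decomposition via Taylor expansion of $\tau(y,\eta)$ in $y$ about $x$, the conversion of $(y-x)^\alpha$ into $\partial_\xi^\alpha$ acting on $\sigma$ by integration by parts, and the identification of the main terms are all correct and routine. Your reading of the hypothesis $M+\mu\geq m_1$ as guaranteeing enough $\xi$--decay on $\partial_\xi^\alpha\sigma$ is right, and your observation that the extra $\nu$ regularity on $\tau$ is there so the remainder inherits a $\mathcal{C}^\nu$ modulus (matching $\sigma$, so that Theorem~\ref{rough bound} applies) is exactly the point. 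The Littlewood--Paley treatment of the remainder kernel, summing a geometric series with ratio $2^{j[(M+\mu)(\delta_2-1)]}$ and absorbing the endpoint loss into the $+\varepsilon$, is the right mechanism for producing the stated order on $E$; the interval $(-(1-\delta)\nu,\nu)$ then comes directly from Bourdaud's theorem applied to a $\mathcal{C}^\nu S_{1,\delta}$ remainder symbol. The only place your sketch is thin is the passage from kernel/size estimates on each dyadic piece to genuine $H^{s+m}\to H^s$ bounds uniformly over the stated $s$--range; this is where the real work in \cite{Saw} lies, and it requires more than ``a bounded convolution kernel'' --- one needs either a paraproduct decomposition or a careful Besov-space argument to handle the interaction between the low $x$--regularity of $\sigma$ and the frequency localization. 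But as a plan it is sound.
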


There is an analogous expansion for the symbol of the adjoint operator $%
\left( \limfunc{Op}\sigma \right) ^{\func{tr}}$.

\subsubsection{Smooth distributions and wave front sets}

The following definitions are taken from\ Treves \cite{Tre}.

\begin{definition}
A distribution $u$ in an open\ set $\Omega \subset \mathbb{R}^{n}$ is said
to be $C^{\infty }$ in some neighbourhood of a point $\left( x_{0},\xi
^{0}\right) \in \Omega \times \left( \mathbb{R}^{n}\setminus \left\{
0\right\} \right) $ if there is a function $g\in C_{c}^{\infty }\left( 
\mathbb{R}^{n}\right) $ equal to $1$ in a neighbourhood of $x_{0}$, and an
open cone $\Gamma ^{0}\subset \mathbb{R}^{n}$ containing $\xi ^{0}$ such
that for every $M>0$ there is a positive constant $C_{M}$ satisfying%
\begin{equation*}
\left\vert \widehat{gu}\left( \xi \right) \right\vert \leq C_{M}\left(
1+\left\vert \xi \right\vert \right) ^{-M},\ \ \ \ \ \xi \in \Gamma ^{0}.
\end{equation*}
\end{definition}

\begin{definition}
A distribution $u$ in an open\ set $\Omega \subset \mathbb{R}^{n}$ is said
to be $C^{\infty }$ in a conic open subset $\Gamma \subset \Omega \times
\left( \mathbb{R}^{n}\setminus \left\{ 0\right\} \right) $ if it is $%
C^{\infty }$ in some neighbourhood of every point of $\Gamma $. The wave
front set $WF\left( u\right) $ of $u$ is the complement in $\Omega \times
\left( \mathbb{R}^{n}\setminus \left\{ 0\right\} \right) $ of the union of
all conic open sets in which $u$ is $C^{\infty }$:%
\begin{equation*}
WF\left( u\right) \equiv \Omega \times \left( \mathbb{R}^{n}\setminus
\left\{ 0\right\} \right) \setminus \dbigcup \left\{ \Gamma \text{ conic open%
}\subset \Omega \times \left( \mathbb{R}^{n}\setminus \left\{ 0\right\}
\right) :u\text{ is }C^{\infty }\text{ in }\Gamma \right\} .
\end{equation*}%
For $\gamma \in \mathbb{R}$, the $H^{\gamma }$ wave front set of $u$ is
defined analogously, where $H^{\gamma }$ is the Sobolev space of order $%
\gamma $.
\end{definition}

\subsection{Proof of Theorem \protect\ref{main_thm_Grushin}, the limited
smoothness variant of Christ's theorem}

Now we can begin our proof of the limited smoothness variant Theorem \ref%
{main_thm_Grushin}, in the setting of real vector fields, of M. Christ's
theorem. Let $u\in \mathcal{D}^{\prime }\left( V\right) $ and $0<\gamma
<\delta $ be given. Suppose that the $H^{\gamma }$ wave front set of $Lu$\
is disjoint from some open conic neighbourhood $\Gamma _{0}$ of a point $%
\left( x_{0},\xi _{0}\right) \in T^{\ast }V$. Without loss of generality we
may assume that $u\in \mathcal{E}^{\prime }\left( V\right) $. Fix an integer 
$K\in \mathbb{Z}$ (possibly quite large) such that $u\in H^{-K}$. We will
show that $\left( x_{0},\xi _{0}\right) \notin WF_{H^{\gamma }}\left(
u\right) $ by first constructing a pseudodifferential operator $\Lambda $,
that is elliptic of order $\gamma $ in a conic neighbourhood of $\left(
x_{0},\xi _{0}\right) $, and then showing that $\Lambda u\in H^{0}\left( 
\mathbb{R}^{d}\right) $.

To do this, let $\psi $ be as in part (1) (c) of Theorem \ref%
{main_thm_Grushin}. Recall the definitions of the symbol classes $S_{\rho
,\eta }^{m}$, $S_{\rho ,\eta }^{m,k}$ and $S_{\rho ,\eta }^{m+}$: 
\begin{eqnarray*}
a &\in &S_{\rho ,\eta }^{m}\text{ if }\left\vert \partial _{x}^{\alpha
}\partial _{\xi }^{\beta }a\left( x,\xi \right) \right\vert \leq C_{\alpha
,\beta }\left\langle \xi \right\rangle ^{m-\rho \left\vert \beta \right\vert
+\eta \left\vert \alpha \right\vert }, \\
a &\in &S_{\rho ,\eta }^{m,k}\text{ if }\left\vert \partial _{x}^{\alpha
}\partial _{\xi }^{\beta }a\left( x,\xi \right) \right\vert \leq C_{\alpha
,\beta }\left\langle \xi \right\rangle ^{m-\rho \left\vert \beta \right\vert
+\eta \left\vert \alpha \right\vert }\left( \log \left\langle \xi
\right\rangle \right) ^{k+\left\vert \alpha \right\vert +\left\vert \beta
\right\vert }, \\
S_{\rho ,\eta }^{m+} &\equiv &\dbigcap\limits_{\varepsilon >0}S_{\rho
-\varepsilon ,\eta +\varepsilon }^{m,\varepsilon }\ .
\end{eqnarray*}%
Then following Christ we define a symbol of nonconstant order, depending on
parameters $\gamma $ and $N_{0}$ by 
\begin{equation}
\lambda \left( x,\xi \right) =\left\{ 
\begin{array}{ccc}
\left\vert \xi \right\vert ^{\gamma }e^{-N_{0}\left( \log \left\vert \xi
\right\vert \right) \psi \left( x,\xi \right) } & \text{ if } & \left\vert
\xi \right\vert \geq e \\ 
C^{\infty }\text{ and nonvanishing} & \text{ if } & \left\vert \xi
\right\vert <e%
\end{array}%
\right. \ .  \label{def lambda}
\end{equation}%
The nonnegativity of $\psi $ implies that $\lambda \in S_{1,0}^{\gamma +}$.
Moreover, $\lambda \in S_{1,0}^{\gamma ,0}$. With $\gamma $ fixed, there
exists $\theta >0$ such that for each $N_{0}$, we have $\lambda \in
S_{1,0}^{-\theta N_{0}+}$ on the closure of the complement of $\Gamma _{1}$.
Now choose $N_{0}$ so that $-\theta N_{0}<-K$. Then with 
\begin{equation*}
\Lambda =\limfunc{Op}\left( \lambda \right) ,
\end{equation*}%
we have $\Lambda u\in H^{-K+\theta N_{0}}\subset H^{0}$ microlocally on the
complement of $\Gamma _{1}$.

Define cutoff functions $\eta _{1},\eta _{2}\in C_{c}\left( \mathbb{R}
^{d}\right) $ such that $\eta _{2}\equiv 1$ in a neighbourhood of the
support of $u$, $\eta _{1}\equiv 1$ in a neighbourhood of the support of $%
\eta _{2}$, and $\func{Supp}\eta _{1}\subset V$.

Recall that if $a\in S_{\rho ,\eta }^{m}$ and $b\in S_{\rho ,\eta }^{n}$,
and $\rho >\eta $, then $\limfunc{Op}\left( a\right) \circ \limfunc{Op}
\left( b\right) $ has a symbol $a\odot b$ with an asymptotic expansion 
\begin{equation}
a\odot b\left( x,\xi \right) \sim \sum_{\alpha }c_{\alpha }\ \partial _{\xi
}^{\alpha }a\left( x,\xi \right) \ \partial _{x}^{\alpha }b\left( x,\xi
\right) ,\ \ \ \ \ c_{\alpha }=\frac{\left( -i\right) ^{\alpha }}{\alpha !}.
\label{asymp expan}
\end{equation}
The notation $\sim $ means that for every $N$, the operator 
\begin{equation*}
\limfunc{Op}\left( a\right) \circ \limfunc{Op}\left( b\right) -\limfunc{Op}
\left( \sum_{\alpha <N}c_{\alpha }\ \partial _{\xi }^{\alpha }a\left( x,\xi
\right) \ \partial _{x}^{\alpha }b\left( x,\xi \right) \right)
\end{equation*}
is smoothing of order $m+n-N\left( \rho -\eta \right) $ in the scale of
Sobolev spaces. The next lemma is taken verbatim from \cite{Chr}, as it
involves only symbols of type $\left( 1,0\right) $.

\begin{lemma}[Lemma 4.1 in \protect\cite{Chr}]
There exists an operator $\Lambda ^{-1}\in S_{1,0}^{m+}$ for some $m=m\left(
\gamma \right) $ depending on $\gamma $, such that $\Lambda \circ \Lambda
^{-1}-$ is smoothing of infinite order. Moreover, such an operator may be
constructed with a symbol of the form 
\begin{equation*}
\left( 1+f\right) \lambda ^{-1},\ \ \ \ \ f\in S_{1,0}^{-1,2}.
\end{equation*}
\end{lemma}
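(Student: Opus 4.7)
The plan is to construct $\Lambda^{-1}$ as a right parametrix for $\Lambda$ with symbol of the form $b=(1+f)\lambda^{-1}$, solving $\lambda\odot b\equiv 1$ modulo $S^{-\infty}$ order-by-order, and then verifying that the resulting $f$ lies in $S^{-1,2}_{1,0}$. Because ellipticity of $\lambda$ (as a symbol of variable order that is everywhere nonvanishing) guarantees that a right parametrix is automatically a two-sided parametrix modulo $S^{-\infty}$, the asserted identity $\Lambda\circ\Lambda^{-1}-I\in\mathrm{Op}(S^{-\infty})$ will follow.

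The first step is to record derivative bounds for $\lambda^{\pm1}$. Writing $\phi(x,\xi)=\gamma\log|\xi|-N_{0}(\log|\xi|)\psi(x,\xi)$ so that $\lambda^{\pm1}=e^{\pm\phi}$, each $\xi$-derivative of $\phi$ contributes a factor of $\langle\xi\rangle^{-1}$ (using $\partial_{\xi}\psi=0$ in the cone where $\psi$ is $\xi$-independent), while each $x$-derivative of $\phi$ costs one $\log\langle\xi\rangle$ factor (since $\partial_{x}\phi=-N_{0}(\log|\xi|)\partial_{x}\psi$ and $\psi\in S^{0}_{1,0}$). By Fa\`a di Bruno this gives $\lambda\in S^{\gamma,0}_{1,0}$, together with analogous log-weighted bounds for $\lambda^{-1}$. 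Plugging into the asymptotic composition formula \ref{asymp expan},
\begin{equation*}
\lambda\odot\lambda^{-1}=1+r_{1},\qquad r_{1}\sim\sum_{|\alpha|\geq 1}c_{\alpha}\,\partial_{\xi}^{\alpha}\lambda\cdot\partial_{x}^{\alpha}\lambda^{-1},
\end{equation*}
and term-by-term estimation (using $\lambda\cdot\lambda^{-1}=1$ to cancel the exponential factors in each product) shows the $|\alpha|$-summand to be $O\bigl(\langle\xi\rangle^{-|\alpha|}(\log\langle\xi\rangle)^{|\alpha|}\bigr)$ with corresponding bounds on its derivatives; a little bookkeeping yields $r_{1}\in S^{-1,2}_{1,0}$.

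With $r_{1}$ in hand, I would define $f$ by asymptotic inversion of the Neumann series: set $f^{(0)}=-r_{1}$ and $f^{(j)}=(-r_{1})\odot f^{(j-1)}$ for $j\geq 1$, so that $(1+f^{(0)}+f^{(1)}+\cdots)\odot(1+r_{1})\sim 1$ formally. Each iterate gains a power of $\langle\xi\rangle^{-1}$ over its predecessor with only a bounded increase in the log exponent, and a standard Borel summation in the scale $\{S^{-(j+1),2(j+1)}_{1,0}\}_{j\geq 0}$ produces a symbol $f$ with $f\sim\sum_{j\geq 0}f^{(j)}$ satisfying $f\in S^{-1,2}_{1,0}$. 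Multiplying by $\lambda^{-1}$ delivers $b=(1+f)\lambda^{-1}\in S^{m+}_{1,0}$ for some $m=m(\gamma)$ dictated by the worst-case order of $\lambda^{-1}$, and by construction $\lambda\odot b-1\in S^{-\infty}$, i.e.\ $\Lambda\circ\mathrm{Op}(b)-I$ is smoothing of infinite order.

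The main obstacle is controlling the logarithmic losses throughout this iteration. Because each $x$-differentiation of $\lambda^{\pm1}$ produces an additional factor of $\log\langle\xi\rangle$, successive iterates naturally accumulate higher powers of $\log\langle\xi\rangle$, and one must verify that the log exponent saturates (rather than diverging) so that $f$ actually lies in $S^{-1,2}_{1,0}$ with the specific finite log-weight $2$, rather than only in some class with unbounded log exponent. The rigorous argument has to be carried out in the scales $S^{m,k}_{1,0}$ and $S^{m+}_{1,0}$ rather than in $S^{m}_{1,0}$, and it uses crucially the fact that the leading gain $\langle\xi\rangle^{-1}$ from $r_{1}$ dominates the log losses at every iteration step, so that only $O(1)$ many iterates contribute to any prescribed Sobolev-scale order of smoothing.
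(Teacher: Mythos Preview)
Your overall strategy---compute the first-order composition error $r_1=\lambda\odot\lambda^{-1}-1\in S^{-1,2}_{1,0}$, then correct by an asymptotic Neumann series---is the right idea, and your derivative bounds for $\lambda^{\pm1}$ and the identification of $r_1\in S^{-1,2}_{1,0}$ are correct and match what the paper uses. But there is a genuine gap in the Neumann-series step.

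You solve $(1+f)\odot(1+r_1)\sim 1$, i.e.\ you construct a \emph{left} $\odot$-inverse of $(1+r_1)=\lambda\odot\lambda^{-1}$, and then form $b=(1+f)\cdot\lambda^{-1}$ as a \emph{pointwise} product of symbols. From these two facts the conclusion $\lambda\odot b\sim 1$ does \emph{not} follow: by associativity of $\odot$ you only get $\bigl[(1+f)\odot\lambda\bigr]\odot\lambda^{-1}\sim 1$, which says nothing about $\lambda\odot\bigl[(1+f)\cdot\lambda^{-1}\bigr]$. You have conflated operator composition $\odot$ with pointwise multiplication of symbols. Even if you fix the side and solve $(1+r_1)\odot(1+g)\sim 1$, the resulting right parametrix has symbol $\lambda^{-1}\odot(1+g)$, which is again a composition, not a pointwise product $(1+f)\lambda^{-1}$; turning one into the other requires a further argument.

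The paper sidesteps this by working directly with the ansatz $b=(1+f)\lambda^{-1}$ and solving $\lambda\odot[(1+f)\lambda^{-1}]\sim 1$ recursively for $f\sim\sum_{k\ge1}f_k$, using the standard parametrix recursion (\ref{recurse}). This yields $f_1\in S^{-1,2}_{1,0}$ and $f_k\in S^{-k+}_{1,0}$ for $k\ge2$; since each $S^{-k+}_{1,0}\subset S^{-1,2}_{1,0}$ for $k\ge2$ (the extra $\langle\xi\rangle^{-1}$ decay absorbs the type-$(\rho,\eta)$ and log losses), the Borel sum $f$ lies in $S^{-1,2}_{1,0}$ as asserted. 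Your final paragraph correctly anticipates this mechanism, but the recursion you wrote down is not the one that produces it.
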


\begin{proof}
Write $f\sim \sum_{k=1}^{\infty }f_{k}$. Solve the equation 
\begin{equation*}
\lambda \odot \left[ \left( 1+f\right) \lambda ^{-1}\right] \sim 1
\end{equation*}
using the asymptotic expansion (\ref{asymp expan}) and the usual iterative
procedure as given in (\ref{recurse}). One obtains $f_{1}\in S^{-1,2}$, and
by induction, each $f_{k}\in S_{1,0}^{-k+}$. Choose $\Lambda $ to be an
operator whose full symbol has expansion $\sum_{k=1}^{\infty }f_{k}$, so
that the error is smoothing of all orders in the scale of Sobolev spaces.
\end{proof}

To prove an analogue of Lemma 4.2 in \cite{Chr} we will need an auxiliary
lemma.

\begin{lemma}
\label{lem:aux} Let $P\in Op(C^{\nu }S_{1,0}^{m,l})$ where $m,l\in \mathbb{N}
$, and let $\Lambda $ be the operator in (\ref{def lambda}), where where we
recall that $\psi $ is everywhere nonnegative, vanishes identically in a
small conic neighbourhood of $\left( x_{0},\xi _{0}\right) $, and is
strictly positive on the complement of $\Gamma _{1}$. Then 
\begin{equation*}
\Lambda P\Lambda ^{-1}=P+R_{1}+R_{2}+E,
\end{equation*}%
where $R_{1}\in Op(C^{\nu -1}S_{1,0}^{m-1,l+1})$, $R_{2}\in Op(C^{\nu
-2}S_{1,0}^{m-2,l+2})$, and $E\in \mathcal{O}_{\left( -\nu ,\nu \right)
}^{m-M-\varepsilon }$ for every $m\leq M<\nu $ and some $0<\varepsilon <1$.
Moreover, the operator $R_{1}$ has the form 
\begin{equation*}
R_{1}=\limfunc{Op}\left( \left\{ \log \lambda ,\sigma \left( P\right)
\right\} \right) .
\end{equation*}
\end{lemma}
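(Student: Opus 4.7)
The plan is to compute the full symbol of $\Lambda P \Lambda^{-1}$ by applying the rough pseudodifferential calculus (Theorem \ref{Saw calc}) twice, first to form $\Lambda\circ P$ and then to compose on the right with $\Lambda^{-1}$ (whose symbol is $(1+f)\lambda^{-1}$ with $f\in S^{-1,2}_{1,0}$ by the preceding lemma). The cleanest organization is to write $\Lambda P\Lambda^{-1}=P+[\Lambda,P]\Lambda^{-1}$ modulo a smoothing error (from $\Lambda\Lambda^{-1}=I \bmod S^{-\infty}$) and to expand the commutator $[\Lambda,P]$. The leading symbols of $\Lambda P$ and $P\Lambda$ are both $\lambda\,\sigma(P)$, so they cancel, leaving a first-order correction $\frac{1}{i}\{\lambda,\sigma(P)\}$ and a second-order correction coming from the $\ell=2$ term of the expansion.

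The key algebraic identity is $\{\lambda,g\}=\lambda\{\log\lambda,g\}$, since $\nabla_\xi\lambda=\lambda\,\nabla_\xi\log\lambda$ and $\nabla_x\lambda=\lambda\,\nabla_x\log\lambda$. Consequently, once we multiply $[\Lambda,P]$ on the right by $\Lambda^{-1}$ (whose principal symbol is $\lambda^{-1}$), the factor $\lambda$ cancels and the first-order term of the combined expansion is $R_1=\limfunc{Op}(\{\log\lambda,\sigma(P)\})$, as required. The analogous bookkeeping for the $\ell=2$ term of the two compositions, again factoring out $\lambda$ via identities such as $\partial_\xi^2\lambda=\lambda\bigl((\partial_\xi\log\lambda)^2+\partial_\xi^2\log\lambda\bigr)$, yields $R_2$, whose symbol is a sum of products of two derivatives of $\log\lambda$ with two derivatives of $\sigma(P)$; since each $x$-derivative of $\sigma(P)$ costs one unit of Hölder regularity and each derivative of $\log\lambda$ either drops the $\xi$-order by one or produces an extra $\log\langle\xi\rangle$ factor, this places $R_1\in\limfunc{Op}(\mathcal{C}^{\nu-1}S^{m-1,l+1}_{1,0})$ and $R_2\in\limfunc{Op}(\mathcal{C}^{\nu-2}S^{m-2,l+2}_{1,0})$. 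The contribution of $f\in S^{-1,2}_{1,0}$ in $\sigma(\Lambda^{-1})$ is smoother by one order in $\xi$ and thus absorbs into $R_2$ or into the remainder.

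For the error $E$, I would pick an integer $M$ with $m\leq M<\nu$ and truncate both symbol expansions at this order. The two remainders produced by Theorem \ref{Saw calc} then lie in $\mathcal{O}^{m_1+m_2+(M+\mu)(\delta_2-1)+\varepsilon}_{(-(1-\delta)\nu,\nu)}$ for any $\varepsilon>0$, and with $\delta_1=\delta_2=0$ this collapses to $\mathcal{O}^{m-M-\varepsilon}_{(-\nu,\nu)}$ (up to composition by $\Lambda^{-1}$, which preserves the operator class modulo adjusting $\varepsilon$). The main obstacle I anticipate is this last step: verifying that the composite of a remainder from the first application with the (rough) operator $\Lambda^{-1}$, and vice versa, stays inside the operator class $\mathcal{O}^{m-M-\varepsilon}_{(-\nu,\nu)}$, which requires careful use of Theorem \ref{rough bound} and the fact that $\Lambda^{-1}$ has order bounded by $m(\gamma)$ on the relevant Sobolev scale. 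A secondary nuisance is that $\lambda$ is only in $S_{1,0}^{\gamma+}$ (not $S_{1,0}^\gamma$), so every application of the calculus produces additional logarithmic factors that must be tracked in the $S^{m,k}$ log-index rather than the plain $S^m$ index.
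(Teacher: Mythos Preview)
Your proposal is correct and follows essentially the same approach as the paper: write $\Lambda P\Lambda^{-1}-P=[\Lambda,P]\Lambda^{-1}$ modulo smoothing, expand the commutator with the rough calculus (Theorem \ref{Saw calc}), and use $\partial^{\alpha}\lambda/\lambda=\partial^{\alpha}\log\lambda$ for $|\alpha|=1$ to recognize the leading term as $\{\log\lambda,\sigma(P)\}$. The paper streamlines the step you flag as the ``main obstacle'' by computing directly the symbol of $\Lambda P-P\Lambda$ divided by $\lambda$ (i.e.\ treating the right composition with $\Lambda^{-1}$ at the symbol level as multiplication by $\lambda^{-1}$, with the $(1+f)$ correction absorbed into the lower-order terms), which avoids a separate invocation of the calculus for the second composition.
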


\begin{proof}
Using Theorem \ref{Saw calc} we see that the symbol of $\Lambda P-P\Lambda $
divided by $\lambda $ equals 
\begin{eqnarray*}
&&\frac{1}{\lambda }\left\{ \lambda \odot \sigma \left( P\right) -\sigma
\left( P\right) \odot \lambda \right\} =\left\{ \sum_{\left\vert \alpha
\right\vert =1}+\sum_{2\leq \left\vert \alpha \right\vert \leq M}\right\}
c_{\alpha }\ \left[ \frac{\partial _{\xi }^{\alpha }\lambda }{\lambda }\
\partial _{x}^{\alpha }\sigma \left( P\right) -\partial _{\xi }^{\alpha
}\sigma \left( P\right) \ \frac{\partial _{x}^{\alpha }\lambda }{\lambda }%
\right] +E \\
&=&\sum_{\left\vert \alpha \right\vert =1}c_{\alpha }\ \left[ \partial _{\xi
}^{\alpha }\log \lambda \ \partial _{x}^{\alpha }\sigma \left( P\right)
-\partial _{\xi }^{\alpha }\sigma \left( P\right) \ \partial _{x}^{\alpha
}\log \lambda \right] +\text{symbol in }C^{\nu -M}S_{1,0}^{m-2,l+2}+E \\
&=&\left\{ \log \lambda ,\sigma \left( P\right) \right\} +\text{symbol in }%
C^{\nu -M}S_{1,0}^{m-2,l+2}+E\ ,
\end{eqnarray*}%
where $E\in \mathcal{O}_{\left( -\nu ,\nu \right) }^{m-M-\varepsilon }$ for
every $M<\nu $ and some $0<\varepsilon <1$; and $\left\{ \log \lambda
,\sigma \left( P\right) \right\} $ is the Poisson bracket of $\log \lambda $
and $\sigma \left( P\right) $, and is a symbol in $\mathcal{C}^{\nu
-1}S_{1,0}^{m-1,l+1}$.
\end{proof}

Define 
\begin{equation}
L_{1}=\sum_{j}X_{j}^{\func{tr}}X_{j}+\sum_{j}A_{j}X_{j}+\sum_{j}X_{j}^{\func{%
tr}}\tilde{A}_{j}+A_{0},  \label{L1-def}
\end{equation}%
so that $L=L_{1}+R_{1}+\widehat{\nabla }\cdot \mathbf{Q}_{p}\widehat{\nabla }
$. This next lemma is our first analogue of Lemma 4.5 in \cite{Chr}.

\begin{lemma}[Lemma 4.5 in \protect\cite{Chr}]
\label{lem form G} Let $\Lambda $ be the operator with symbol $\lambda $ in (%
\ref{def lambda}). Suppose that $L_{1}$ takes the form (\ref{L1-def}).
Define 
\begin{equation*}
b_{j}\equiv \limfunc{Op}\left\{ \log \lambda ,\sigma \left( X_{j}^{\func{tr}%
}\right) \right\} \text{ and }\widetilde{b_{j}}\equiv \limfunc{Op}\left\{
\log \lambda ,\sigma \left( X_{j}\right) \right\} .
\end{equation*}%
Then there exists a pseudodifferential operator $G$ of the form 
\begin{equation}
G=\sum_{j}B_{j}\circ X_{j}+\sum_{j}X_{j}^{\func{tr}}\circ \widetilde{B}%
_{j}+B_{0}\ ,  \label{form G}
\end{equation}%
such that 
\begin{equation*}
\left( L_{1}+G\right) \eta _{1}\Lambda \eta _{2}=\eta _{1}\Lambda L\eta
_{2}+R\ ,
\end{equation*}%
where 
\begin{eqnarray}
B_{j} &=&b_{j}+c_{j}\text{ and }\widetilde{B}_{j}=\widetilde{b}_{j}+%
\widetilde{c}_{j}\text{ for every }j\geq 1,  \label{B def} \\
B_{0} &=&\sum_{j}\left( b_{j}\circ \widetilde{b}_{j}+A_{j}\widetilde{b}_{j}+%
\widetilde{A}_{j}b_{j}\right) \ \func{mod}\limfunc{Op}\left( C^{0,\delta
}S_{1,0}^{-1,2}\right) \ ,  \notag
\end{eqnarray}%
where each $c_{j},\widetilde{c}_{j}\in \limfunc{Op}\left( C^{0,\delta
}S^{-1,1}\right) $, and where $A_{j},\widetilde{A}_{j}\in C^{1,\delta
}S_{1,0}^{0}$ are the coefficents of the differential operator $L$ in (\ref%
{L1-def}), and $R\in \mathcal{O}_{\left( -\delta ,\delta \right)
}^{-\varepsilon }$.
\end{lemma}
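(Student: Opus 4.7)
The plan is to commute $\Lambda$ past each factor in $L_1$ using the rough pseudodifferential calculus of Lemma~\ref{lem:aux}, identify the first-order Poisson-bracket corrections with the operators $b_j$ and $\widetilde{b}_j$, and show that the iterated second-order compositions account for $B_0$ modulo the error class $\limfunc{Op}(\mathcal{C}^{0,\delta}S_{1,0}^{-1,2})$. Since $\eta_2\equiv 1$ on the relevant support and $\eta_1\equiv 1$ near $\limfunc{supp}\eta_2$, I would first absorb the cutoffs into infinite-order smoothing residuals, so that the main calculation can be carried out on $\eta_1\Lambda L\eta_2-L_1\eta_1\Lambda\eta_2$ as if the cutoffs were not present, and reinstated only at the end.

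Next, applying Lemma~\ref{lem:aux} to $P=X_j^{\limfunc{tr}}$ and to $P=X_j$, each of which lies in $\limfunc{Op}(\mathcal{C}^{2,\delta}S_{1,0}^{1})$, yields modulo $\mathcal{O}_{(-\delta,\delta)}^{-\varepsilon}$ the identities
\begin{equation*}
\Lambda X_j^{\limfunc{tr}}=(X_j^{\limfunc{tr}}+b_j+c_j)\Lambda,\qquad \Lambda X_j=(X_j+\widetilde{b}_j+\widetilde{c}_j)\Lambda,
\end{equation*}
where $c_j,\widetilde{c}_j\in\limfunc{Op}(\mathcal{C}^{0,\delta}S_{1,0}^{-1,1})$ arise from the second iterate of the symbol expansion in Theorem~\ref{Saw calc}. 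Chaining these identities and inserting into $\Lambda X_j^{\limfunc{tr}}X_j$ produces a fourfold expansion whose leading piece $X_j^{\limfunc{tr}}X_j\Lambda$ cancels against the $L_1\eta_1\Lambda\eta_2$ piece on the left-hand side; the mixed first-order pieces $(b_j+c_j)X_j$ and $X_j^{\limfunc{tr}}(\widetilde{b}_j+\widetilde{c}_j)$ supply the $B_j\circ X_j$ and $X_j^{\limfunc{tr}}\circ\widetilde{B}_j$ contributions to $G$; and the crossed composition $b_j\circ\widetilde{b}_j$ contributes to $B_0$, with residuals such as $c_j\circ\widetilde{b}_j$ and $b_j\circ\widetilde{c}_j$ landing in $\limfunc{Op}(\mathcal{C}^{0,\delta}S_{1,0}^{-1,2})$, consistent with the modular freedom allowed in the statement.

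The drift terms $A_jX_j$ and $X_j^{\limfunc{tr}}\widetilde{A}_j$ are treated by the same scheme with a single commutation on the $X$-factor, producing $A_j\widetilde{b}_j$ and $\widetilde{A}_jb_j$ in $B_0$; the scalar remainder $A_0$ commutes with $\Lambda$ up to something in the target error class by virtue of its negative order. For the leftover pieces $R_1+\widehat{\nabla}^{\limfunc{tr}}\mathbf{Q}_p\widehat{\nabla}$ of $L-L_1$, the subordinate and subunit structure, combined with the equivalences in (\ref{add_cond_gen G}), allows one to rewrite $\widehat{\nabla}$ in terms of the $X_j$'s at the cost of quasiconformal factors $\sqrt{a}$; the resulting Poisson brackets with $\log\lambda$ are then absorbed either into the $B_j,\widetilde{B}_j$ coefficients or into $B_0$ modulo the stated error class.

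I expect the principal difficulty to be the bookkeeping of regularity and symbol-order losses in the rough calculus of Theorem~\ref{Saw calc}. Each commutation of $\Lambda$ with an $X_j$ costs one Hölder derivative in $x$ and gains only a logarithmic weight in $\xi$, so after the two nested commutations demanded by $X_j^{\limfunc{tr}}X_j$ the ambient regularity drops exactly to $\mathcal{C}^{0,\delta}$, which matches the declared class of the correction terms $c_j,\widetilde{c}_j$. Showing that the cumulative residual $R$ still lies in $\mathcal{O}_{(-\delta,\delta)}^{-\varepsilon}$ then requires a tight use of the embedding $S_{1,0}^{m+}\subset S_{1,0}^{m,k}$ to absorb the logarithmic weights generated by $\lambda$, with essentially no slack in either the smoothness $\nu=2+\delta$ or in the small gain exponent $\varepsilon$ on which the subsequent bootstrap will rely.
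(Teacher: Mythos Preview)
Your treatment of the $L_1$ piece---commuting $\Lambda$ past each factor via Lemma~\ref{lem:aux}, identifying the Poisson brackets $b_j,\widetilde{b}_j$, and collecting the crossed terms $b_j\circ\widetilde{b}_j$, $A_j\widetilde{b}_j$, $\widetilde{A}_jb_j$ into $B_0$---is exactly what the paper does.

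The gap is in your handling of $L-L_1=R_1+\widehat{\nabla}^{\limfunc{tr}}\mathbf{Q}_p\widehat{\nabla}$. You propose to ``rewrite $\widehat{\nabla}$ in terms of the $X_j$'s'' using (\ref{add_cond_gen G}), but those are \emph{norm} inequalities, not operator identities: they bound $\|\sqrt{a}\,\widehat{\nabla}u\|$ by $\sum_j\|X_ju\|$ and conversely, but give no way to write $\sqrt{a}\,\partial_{x_k}$ as a $\mathcal{C}^{1,\delta}$-combination of the $X_j$. The commutators $[\Lambda,\mathbf{Q}_p\widehat{\nabla}]$ therefore cannot be absorbed into the $B_j,\widetilde{B}_j,B_0$ structure of (\ref{form G}).

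In fact the displayed identity in the lemma appears to carry a typo: the right-hand side should read $\eta_1\Lambda L_1\eta_2$, not $\eta_1\Lambda L\eta_2$. The paper's own proof confirms this, computing only $G=\Lambda L_1\Lambda^{-1}-L_1$ modulo errors, and Lemma~\ref{pre} later invokes the result in the form $(L_1+G)v=\eta_1\Lambda L_1 u+Ru$. The Grushin block and the subunit remainder $R_1$ are conjugated separately in Lemmas~\ref{lem: E} and~\ref{lem:R1-G}, where the outcome has a different shape (terms in $\mathbf{Q}_p\widehat{\nabla}$ and $D^\alpha\mathbf{Q}_p$ controlled by subordinacy, not by the $X_j$). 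So the last paragraph of your proposal should simply be dropped: it is neither needed for this lemma nor salvageable as written.
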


\begin{proof}
In constructing the symbol of $G$ we will work formally, ignoring the cutoff
functions $\eta _{1}$ and $\eta _{2}$. This is permissible by pseudolocality
since $\eta _{1}\eta _{2}=\eta _{2}$. The desired equation $\left(
L_{1}+G\right) \Lambda =\Lambda L+R$ is then equivalent to 
\begin{eqnarray*}
G &=&\Lambda L_{1}\Lambda ^{-1}-L_{1}+R\Lambda ^{-1} \\
&=&\sum_{j}\left[ \Lambda X_{j}^{\func{tr}}X_{j}\Lambda ^{-1}-X_{j}^{\func{tr%
}}X_{j}\right] +R\Lambda ^{-1} \\
&&+\sum_{j}\Lambda \left( A_{j}X_{j}+X_{j}^{\func{tr}}\widetilde{A_{j}}%
+A_{0}\right) \Lambda ^{-1}-\sum_{j}\left( A_{j}X_{j}+X_{j}^{\func{tr}}%
\widetilde{A_{j}}+A_{0}\right) \\
&\equiv &G_{\limfunc{top}}+\Lambda G_{\limfunc{lower}}\Lambda ^{-1}-G_{%
\limfunc{lower}}; \\
\text{where }G_{\limfunc{top}} &=&\sum_{j}\left[ \Lambda X_{j}^{\func{tr}%
}X_{j}\Lambda ^{-1}-X_{j}^{\func{tr}}X_{j}\right] +R\Lambda ^{-1} \\
&=&\sum_{j}\left[ \left( \Lambda X_{j}^{\func{tr}}\Lambda ^{-1}\right)
\left( \Lambda X_{j}\Lambda ^{-1}\right) -X_{j}^{\func{tr}}X_{j}\right]
+R\Lambda ^{-1}; \\
\text{and }G_{\limfunc{lower}} &=&\sum_{j}\left( A_{j}X_{j}+X_{j}^{\func{tr}}%
\widetilde{A_{j}}+A_{0}\right) .
\end{eqnarray*}

We first consider $G_{\limfunc{top}}$. Using Lemma \ref{lem:aux} with $%
P=X_{j}$, $m=1,l=0$ we have 
\begin{align*}
\Lambda X_{j}\Lambda ^{-1}& =X_{j}+\limfunc{Op}\left( \left\{ \log \lambda
,\sigma \left( X_{j}\right) \right\} \right) +\text{symbol in }C^{0,\delta
}S_{1,0}^{-1,2}\ \ \ \ \ \func{mod}\mathcal{O}_{\left( -\delta ,\delta
\right) }^{-1-\varepsilon } \\
& =X_{j}+b_{j}+c_{j}\ \ \ \ \ \func{mod}\mathcal{O}_{\left( -\delta ,\delta
\right) }^{-1-\varepsilon },
\end{align*}%
where $b_{j}=\limfunc{Op}\left( \left\{ \log \lambda ,\sigma \left(
X_{j}\right) \right\} \right) \in C^{1,\delta }S_{1,0}^{0,1}$ and $c_{j}$
has a symbol in $C^{0,\delta }S_{1,0}^{-1,2}$. Since both $\left\{ \log
\lambda ,\sigma \left( X_{j}\right) \right\} $ and $\left\{ \log \lambda
,\sigma \left( X_{j}^{\func{tr}}\right) \right\} $ belong to $C^{1,\delta
}S_{1,0}^{0,1}$, inserting these equations into the identity derived for $G_{%
\limfunc{top}}$ in the preceding paragraph shows that 
\begin{equation*}
G_{\limfunc{top}}=\sum_{j}B_{j}\circ X_{j}+\sum_{j}X_{j}^{\func{tr}}\circ 
\widetilde{B_{j}}+B_{0}
\end{equation*}%
where the operators $B_{j}$, $\widetilde{B_{j}}\in \limfunc{Op}\left(
C^{1,\delta }S_{1,0}^{0,1}\right) $ and $B_{0}\in \limfunc{Op}\left(
C^{0,\delta }S_{1,0}^{0,2}\right) $ satisfy (\ref{B def}).\newline
Now consider $G_{\limfunc{lower}}$. We can write 
\begin{equation*}
\Lambda G_{\limfunc{lower}}\Lambda ^{-1}=\sum_{j}\Lambda \left(
A_{j}X_{j}+X_{j}^{\func{tr}}\widetilde{A_{j}}+A_{0}\right) \Lambda
^{-1}=\sum_{j}\left( \Lambda A_{j}\Lambda ^{-1}\Lambda X_{j}\Lambda
^{-1}+\Lambda X_{j}^{\func{tr}}\Lambda ^{-1}\Lambda \widetilde{A_{j}}\Lambda
^{-1}+\Lambda A_{0}\Lambda ^{-1}\right) .
\end{equation*}%
Applying Lemma \ref{lem:aux} to $A_{j}$ and $X_{j}$ we have 
\begin{align*}
\Lambda A_{j}\Lambda ^{-1}& =A_{j}+\text{symbol in }\limfunc{Op}\left(
C^{0,\delta }S_{1,0}^{-1,1}\right) \ \ \ \ \ \func{mod}\mathcal{O}_{\left(
-\delta ,\delta \right) }^{-1-\varepsilon }. \\
\Lambda X_{j}\Lambda ^{-1}& =X_{j}+\limfunc{Op}\left( \left\{ \log \lambda
,\sigma \left( X_{j}\right) \right\} \right) +\text{symbol in }C^{0,\delta
}S_{1,0}^{-1,2}\ \ \ \ \ \func{mod}\mathcal{O}_{\left( -\delta ,\delta
\right) }^{-1-\varepsilon }.
\end{align*}%
Using Theorem \ref{Saw calc} this gives 
\begin{equation*}
\Lambda A_{j}\Lambda ^{-1}\Lambda X_{j}\Lambda ^{-1}=A_{j}X_{j}+c_{j}X_{j}+%
\text{ symbol in }C^{0,\delta }S_{1,0}^{0,1}\ \ \ \ \ \func{mod}\mathcal{O}%
_{\left( -\delta ,\delta \right) }^{-\varepsilon },
\end{equation*}%
where $c_{j}\in C^{0,\delta }S_{1,0}^{-1,1}$, and the symbol in $C^{0,\delta
}S_{1,0}^{0,1}$ has the form $A_{j}\tilde{b}_{j}+$symbol in $C^{0,\delta
}S_{1,0}^{0,1}$ with $\tilde{b}_{j}=\left\{ \log \lambda ,\sigma \left(
X_{j}\right) \right\} $. Analyzing the other terms in $\Lambda G_{\limfunc{%
lower}}\Lambda ^{-1}$ in the same way we obtain 
\begin{align*}
& \Lambda G_{\limfunc{lower}}\Lambda ^{-1}=B_{j}X_{j}+X_{j}^{\func{tr}}%
\tilde{B}_{j}+\tilde{B}_{0}\ \ \ \ \ \func{mod}\mathcal{O}_{\left( -\delta
,\delta \right) }^{-\varepsilon }, \\
\text{where }\ & B_{j},\ \tilde{B}_{j}\ \ \ \ \ \text{as in \ref{B def}},
\end{align*}%
and $\tilde{B}_{0}\in \limfunc{Op}\left( C^{0,\delta }S_{1,0}^{0,1}\right) $
and has the structure as in (\ref{B def}). Combining with the estimate for $%
G_{\limfunc{top}}$ we obtain the result.
\end{proof}

\begin{lemma}[Lemma 4.6 in \protect\cite{Chr}]
\label{4.6'}Suppose that $L,\psi ,p$ satisfy the hypotheses of Theorem \ref%
{main_thm_Grushin}. Then for any $N\geq 0$, and for any fixed relatively
compact subset $U\subset V$, any $\delta >0$ and any $f\in C^{\gamma +3}$
supported in $U$, the operator $G$ constructed in Lemma \ref{lem form G}
satisfies 
\begin{equation}
\left\vert \left\langle Gf,f\right\rangle \right\vert \leq \delta
\sum_{j}\left\Vert X_{j}f\right\Vert ^{2}+\delta ||\sqrt{a}\widehat{\nabla }%
f||^{2}+C_{\delta }\left\Vert f\right\Vert ^{2}+C_{\delta }\left\Vert 
\limfunc{Op}\left( p\right) f\right\Vert _{H^{1}}^{2}\ .  \label{G bound}
\end{equation}
\end{lemma}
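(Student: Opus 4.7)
The plan is to mirror Christ's proof of Lemma 4.6 in \cite{Chr}, expanding $\langle Gf,f\rangle$ according to the three-term structure (\ref{form G}) and bounding each piece by Cauchy-Schwarz followed by AM-GM. Explicitly, I would write
\begin{equation*}
\langle Gf,f\rangle = \sum_j \langle B_j X_j f, f\rangle + \sum_j \langle X_j^{\func{tr}} \widetilde{B}_j f, f\rangle + \langle B_0 f, f\rangle.
\end{equation*}
For the first sum I transport via adjoint, $\langle B_j X_j f, f\rangle = \langle X_j f, B_j^{\ast} f\rangle$, and use $|\langle X_j f, B_j^{\ast} f\rangle| \leq \tfrac{\delta}{2N}\|X_j f\|^2 + \tfrac{N}{2\delta}\|B_j^{\ast} f\|^2$. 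The first term is already absorbed into the right side of (\ref{G bound}); the task is to bound $\|B_j^{\ast} f\|^2$. An analogous identity $\langle X_j^{\func{tr}} \widetilde{B}_j f, f\rangle = \langle \widetilde{B}_j f, X_j f\rangle$ reduces the second sum to estimating $\|\widetilde{B}_j f\|^2$.

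Now $B_j = b_j + c_j$ with $c_j \in \limfunc{Op}(\mathcal{C}^{0,\delta} S_{1,0}^{-1,1})$, which by Theorem \ref{rough bound} is bounded on $L^2$ and hence contributes only an $O(\|f\|^2)$ term; the symbol of $b_j^{\ast}$ equals $\overline{\sigma(b_j)}$ modulo operators of lower order supplied by Theorem \ref{Saw calc}, so it suffices to bound $\|\limfunc{Op}\{\log\lambda,\sigma(X_j^{\func{tr}})\} f\|^2$. Using $\log\lambda = \gamma \log\langle\xi\rangle - N_0\,\log\langle\xi\rangle\cdot\psi(x,\xi)$ for $|\xi|\geq e$, the Poisson bracket splits as
\begin{equation*}
\gamma \{\log\langle\xi\rangle, \sigma(X_j^{\func{tr}})\} - N_0 \log\langle\xi\rangle\,\{\psi,\sigma(X_j^{\func{tr}})\} - N_0\,\psi\,\{\log\langle\xi\rangle, \sigma(X_j^{\func{tr}})\}.
\end{equation*}
Because $\log\langle\xi\rangle$ is independent of $x$, the first and third symbols lie in a rough $S_{1,0}^{0}$ class (their $x$-derivatives of $\sigma(X_j)$ are order one, cancelled by $\nabla_\xi \log\langle\xi\rangle \sim \xi/|\xi|^2$), hence are $L^2$-bounded and contribute $O(\|f\|^2)$. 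The second symbol is exactly the one appearing in hypothesis (\ref{superlog-G}), which directly yields the desired bound
\begin{equation*}
\|B_j^{\ast} f\|^2 \leq \delta' \sum_k \|X_k f\|^2 + \delta' \|\sqrt{a}\,\widehat{\nabla} f\|^2 + C_{\delta'}\|f\|^2 + C_{\delta'}\|\limfunc{Op}(p) f\|_{H^1}^2,
\end{equation*}
with $\delta'$ adjustable. The identical analysis applies to $\widetilde{B}_j$, using $\{\log\lambda,\sigma(X_j)\}$ in place of $\{\log\lambda,\sigma(X_j^{\func{tr}})\}$.

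The term $\langle B_0 f, f\rangle$ is handled by (\ref{B def}): modulo a symbol in $\mathcal{C}^{0,\delta}S_{1,0}^{-1,2}$, which yields an $L^2$-bounded operator giving only $O(\|f\|^2)$, one has $B_0 = \sum_j (b_j \circ \widetilde{b}_j + A_j \widetilde{b}_j + \widetilde{A}_j b_j)$. For $\langle b_j\circ \widetilde{b}_j f,f\rangle = \langle \widetilde{b}_j f, b_j^{\ast} f\rangle$, Cauchy-Schwarz reduces the task to the Poisson-bracket bounds already established. For $\langle A_j \widetilde{b}_j f, f\rangle = \langle \widetilde{b}_j f, A_j^{\ast} f\rangle$, and symmetrically for $\widetilde{A}_j b_j$, the $L^2$-boundedness of $A_j, \widetilde{A}_j \in \limfunc{Op}(\mathcal{C}^{1,\delta}S_{1,0}^{0})$ (again via Theorem \ref{rough bound}) again reduces matters to the same Poisson-bracket estimate, producing the required control. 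Assembling all contributions and choosing the AM-GM constants small enough after summing over $j$ yields (\ref{G bound}).

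The main technical obstacle is bookkeeping in the rough calculus: adjoints and compositions of symbols in $\mathcal{C}^{\nu}S_{1,0}^{m,k}$ produce remainders in operator classes $\mathcal{O}^{m-\varepsilon}_{(-(1-\delta)\nu,\nu)}$, and one must verify at each step that these remainders act boundedly on the Sobolev scales reached by $f\in C^{\gamma+3}$, so that they contribute only lower-order $\|f\|^2$ terms. Subsidiary care is needed to ensure the constants $C_\delta$ are uniform over $f$ supported in the fixed compact set $U\Subset V$, and that the cutoffs $\eta_1,\eta_2$ used in Lemma \ref{lem form G} do not reintroduce wavefront set components outside the conic neighborhood where $\psi\equiv 0$, since pseudolocality is the only tool that suppresses their effect.
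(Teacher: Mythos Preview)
Your proposal is correct and follows essentially the same approach as the paper: both split $\langle Gf,f\rangle$ according to (\ref{form G}), use Cauchy--Schwarz/AM--GM to separate $\|X_jf\|$ from $\|B_j^{\ast}f\|$ (or $\|\widetilde{B}_jf\|$), decompose $\{\log\lambda,\sigma(X_j)\}$ so as to isolate the $-N_0\log\langle\xi\rangle\{\psi,\sigma(X_j)\}$ term, invoke hypothesis (\ref{superlog-G}) on that piece, and absorb the remaining $S_{1,0}^{0}$ and lower-order symbols into $C_\delta\|f\|^2$. The paper's proof is terser---it simply records $\sigma(b_j)=-N_0\log|\xi|\{\psi,\sigma(X_j^{\func{tr}})\}+\text{symbol in }C^{1,\delta}S_{1,0}^{0}$ and dispatches the remaining terms of (\ref{form G}) with ``handled in the same way''---whereas you spell out the three-term Poisson-bracket expansion and the $B_0$ analysis explicitly, but there is no substantive difference in strategy.
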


\begin{proof}
We first note 
\begin{equation*}
\sigma (b_{j})=\{\log \lambda ,\sigma (X_{j}^{\func{tr}})\}=-N_{0}\log |\xi
|\{\psi ,\sigma (X_{j}^{\func{tr}})\}+\text{symbol in }C^{1,\delta
}S_{1,0}^{0},
\end{equation*}%
and similarly for $\widetilde{b}_{j}$. Using this together with (\ref{B def}
) and hypothesis (\ref{superlog-G}) with $\delta =\delta _{0}$ we therefore
obtain 
\begin{align*}
\left\vert \left\langle B_{j}\circ X_{j}f,f\right\rangle \right\vert &
=\left\vert \left\langle (b_{j}+c_{j})\circ X_{j}f,f\right\rangle \right\vert
\\
& \leq \varepsilon \left\Vert X_{j}f\right\Vert ^{2}+C_{\varepsilon
}\left\Vert \widetilde{b_{j}}f\right\Vert ^{2}+C_{\varepsilon }\left\Vert
f\right\Vert ^{2} \\
& \leq \varepsilon \left\Vert X_{j}f\right\Vert ^{2}+C_{\varepsilon
}\left\Vert \log |\xi |\{\psi ,\sigma (X_{j}^{\func{tr}})\}f\right\Vert
^{2}+C_{\varepsilon }\left\Vert f\right\Vert ^{2} \\
& \leq \varepsilon \left\Vert X_{j}f\right\Vert ^{2}+C_{\varepsilon }\left(
\delta _{0}\sum_{j}||X_{j}u||^{2}+\delta _{0}||\sqrt{a}\widehat{\nabla }%
f||^{2}+C_{\delta _{0}}||f||^{2}+C_{\delta
_{0}}||Op(p)f||_{H^{1}}^{2}\right) +C_{\varepsilon }\left\Vert f\right\Vert
^{2}.
\end{align*}%
Choosing $\delta _{0}=\varepsilon /C_{\varepsilon }$ this gives 
\begin{equation*}
\left\vert \left\langle B_{j}\circ X_{j}f,f\right\rangle \right\vert \leq
\varepsilon \sum_{j}||X_{j}u||^{2}+\varepsilon ||\sqrt{a}\widehat{\nabla }%
f||^{2}+C_{\varepsilon }||f||^{2}+C_{\varepsilon }||Op(p)f||_{H^{1}}^{2}.
\end{equation*}%
The rest of the terms in (\ref{form G}) are handled in the same way, giving
( \ref{G bound}).
\end{proof}

To handle the Grushin type term $\widehat{\nabla }\cdot \mathbf{Q}_{p}\left(
x\right) \widehat{\nabla }$ in (\ref{operator-Grushin}) we will need the
following two lemmas

\begin{lemma}
\label{lem: E} There holds 
\begin{equation*}
\left( \widehat{\nabla }\cdot \mathbf{Q}_{p}\widehat{\nabla }\eta _{1}+%
\mathbf{E}\right) \Lambda \eta _{2}=\eta _{1}\Lambda \widehat{\nabla }\cdot 
\mathbf{Q}_{p}\widehat{\nabla }\eta _{2}+\mathbf{R},
\end{equation*}%
where $\mathbf{R}\in \mathcal{O}_{\left( -\delta ,\delta \right)
}^{-\varepsilon }$, and with $\widehat{\xi }=\left( \xi _{p},\dots ,\xi
_{n}\right) $, the matrix operator $\mathbf{E}$ takes the form 
\begin{eqnarray}
\mathbf{E} &=&H\circ \mathbf{Q}_{p}\widehat{\nabla}+\widehat{\nabla}\left(
\sum_{\left\vert \alpha \right\vert =1}D^{\alpha }\mathbf{Q}_{p}\right)
\circ H_{0}+H\circ \left( \sum_{\left\vert \alpha \right\vert =1}D^{\alpha }%
\mathbf{Q}_{p}\right) \circ H_{0}  \label{E form p} \\
&&+H_{3}\circ \mathbf{Q}_{p}\widehat{\nabla}+H\circ \mathbf{Q}_{p}H+\tilde{H}%
_{0}\ \ \ \ \ \mathcal{O}_{\left( -\delta ,\delta \right) }^{-\varepsilon };
\notag \\
\text{where }H &=&\func{Op}\left\{ \log \lambda ,\widehat{\xi }\right\} \in 
\func{Op}\left( S_{1,0}^{0,1}\right) ,\ \ \ H_{0}\in \func{Op}\left(
S_{1,0}^{0}\right) ,\text{\ \ \ }\tilde{H}_{0}\in \func{Op}\left(
C^{0,\delta }S_{1,0}^{0}\right) ,\text{\ \ \ }H_{3}\in \func{Op}\left(
S_{1,0}^{-1,1}\right) .  \notag
\end{eqnarray}
\end{lemma}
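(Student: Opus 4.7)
The plan is to first reduce the identity to an algebraic one for the conjugate $\Lambda\,\widehat{\nabla}^{\func{tr}}\mathbf{Q}_{p}\widehat{\nabla}\,\Lambda^{-1}$, then to compute this conjugate factor by factor using Lemma \ref{lem:aux} and Theorem \ref{Saw calc}, and finally to collect the resulting terms according to the structure displayed in (\ref{E form p}). Since $\eta_{1}\equiv 1$ on the support of $\eta_{2}$, pseudolocality allows us to ignore the cutoffs throughout the computation, up to an error in $\mathcal{O}_{(-\delta,\delta)}^{-\infty}$ which is absorbed into $\mathbf{R}$. Multiplying on the right by $\Lambda^{-1}$ (constructed in the lemma preceding Lemma \ref{lem:aux}) and using that $\Lambda\Lambda^{-1}-I$ is smoothing of all orders, the desired identity is equivalent to
\begin{equation*}
\mathbf{E}\;=\;\Lambda\,\widehat{\nabla}^{\func{tr}}\mathbf{Q}_{p}\widehat{\nabla}\,\Lambda^{-1}-\widehat{\nabla}^{\func{tr}}\mathbf{Q}_{p}\widehat{\nabla}\ \ \ \ \func{mod}\ \mathcal{O}_{(-\delta,\delta)}^{-\varepsilon},
\end{equation*}
which I would then factor as
\begin{equation*}
\Lambda\,\widehat{\nabla}^{\func{tr}}\mathbf{Q}_{p}\widehat{\nabla}\,\Lambda^{-1}=\bigl(\Lambda\widehat{\nabla}^{\func{tr}}\Lambda^{-1}\bigr)\bigl(\Lambda\mathbf{Q}_{p}\Lambda^{-1}\bigr)\bigl(\Lambda\widehat{\nabla}\,\Lambda^{-1}\bigr),
\end{equation*}
again modulo a smoothing error controlled by Theorem \ref{Saw calc}.

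Next I would apply Lemma \ref{lem:aux} to each of the three conjugated factors. For $P=\widehat{\nabla}$ (a smooth symbol of order $1$), the Poisson-bracket extraction produces
\begin{equation*}
\Lambda\widehat{\nabla}\,\Lambda^{-1}=\widehat{\nabla}+H+H_{3}\ \ \ \ \func{mod}\ \mathcal{O}_{(-\delta,\delta)}^{-1-\varepsilon},\quad H=\func{Op}\{\log\lambda,\widehat{\xi}\}\in\func{Op}(S_{1,0}^{0,1}),\ H_{3}\in\func{Op}(S_{1,0}^{-1,1}),
\end{equation*}
and similarly for $\Lambda\widehat{\nabla}^{\func{tr}}\Lambda^{-1}$; here the $N_{0}\log|\xi|\,\{\psi,\widehat{\xi}\}$ structure of $\log\lambda$ is what places $H$ in $S_{1,0}^{0,1}$. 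For $P=\mathbf{Q}_{p}$, a multiplication operator whose symbol has no $\xi$-dependence, the Poisson bracket reduces to $\nabla_{\xi}\log\lambda\cdot\nabla_{x}\mathbf{Q}_{p}$, so Lemma \ref{lem:aux}, applied with the $C^{4,2\delta}$ regularity of $\mathbf{Q}_{p}$, yields
\begin{equation*}
\Lambda\mathbf{Q}_{p}\Lambda^{-1}=\mathbf{Q}_{p}+\Bigl(\sum_{|\alpha|=1}D^{\alpha}\mathbf{Q}_{p}\Bigr)\circ H_{0}+\tilde{H}_{0}\ \ \ \ \func{mod}\ \mathcal{O}_{(-\delta,\delta)}^{-\varepsilon},
\end{equation*}
with $H_{0}\in\func{Op}(S_{1,0}^{0})$ and $\tilde{H}_{0}\in\func{Op}(C^{0,\delta}S_{1,0}^{0})$ carrying the symbol-expansion remainders.

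I would then expand the triple product, subtract the leading term $\widehat{\nabla}^{\func{tr}}\mathbf{Q}_{p}\widehat{\nabla}$, and group the surviving pieces. The first-order-in-$H$ terms collapse into $H\circ\mathbf{Q}_{p}\widehat{\nabla}$ and its transpose relative (which can be rewritten in the stated form by moving one $\widehat{\nabla}$ to the right through $\mathbf{Q}_{p}$ at the cost of producing $\widehat{\nabla}(\sum_{|\alpha|=1}D^{\alpha}\mathbf{Q}_{p})\circ H_{0}$ and an $H_{3}\circ\mathbf{Q}_{p}\widehat{\nabla}$ remainder). The first-order-in-$\Lambda\mathbf{Q}_{p}\Lambda^{-1}-\mathbf{Q}_{p}$ contribution produces the middle term $\widehat{\nabla}(\sum_{|\alpha|=1}D^{\alpha}\mathbf{Q}_{p})\circ H_{0}$ directly, while the mixed cross-term of type $H\cdot(\Lambda\mathbf{Q}_{p}\Lambda^{-1}-\mathbf{Q}_{p})$ yields $H\circ(\sum_{|\alpha|=1}D^{\alpha}\mathbf{Q}_{p})\circ H_{0}$. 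The quadratic-in-$H$ contribution produces $H\circ\mathbf{Q}_{p}H$. All remaining pieces come either from $H_{3}$ or from $\tilde{H}_{0}$, or from composition errors supplied by Theorem \ref{Saw calc}; choosing the order of expansion $M$ in that theorem large enough relative to the Hölder regularity of $\mathbf{Q}_{p}$, each of these remainders falls into $\mathcal{O}_{(-\delta,\delta)}^{-\varepsilon}$.

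The main obstacle will be bookkeeping: three conjugations composed together produce a sizeable list of cross terms, and each must be pushed into one of the five prescribed templates in (\ref{E form p}) or absorbed into $\mathcal{O}_{(-\delta,\delta)}^{-\varepsilon}$ without using more smoothness of $\mathbf{Q}_{p}$ than $C^{4,2\delta}$ provides. The delicate point is that every time a derivative falls on $\mathbf{Q}_{p}$ we lose one order of Hölder regularity, so to achieve the target error class $\mathcal{O}_{(-\delta,\delta)}^{-\varepsilon}$ the expansion must be truncated at exactly the order where Theorem \ref{Saw calc} is still applicable; this is precisely the regime where the $C^{4,2\delta}$ assumption on $\mathbf{Q}_{p}$ is sharp, and is what forces the explicit appearance of the first-derivative block $\sum_{|\alpha|=1}D^{\alpha}\mathbf{Q}_{p}$ in two of the terms rather than an implicit absorption into a smoothing remainder.
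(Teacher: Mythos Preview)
Your overall strategy---reduce to the conjugation $\Lambda\,\widehat{\nabla}^{\func{tr}}\mathbf{Q}_{p}\widehat{\nabla}\,\Lambda^{-1}$, factor, and apply Lemma~\ref{lem:aux} to each factor---is the same as the paper's. The paper, however, factors into \emph{two} pieces $(\Lambda\widehat{\nabla}\Lambda^{-1})\cdot(\Lambda\mathbf{Q}_{p}\widehat{\nabla}\Lambda^{-1})$ rather than three, keeping $\mathbf{Q}_{p}\widehat{\nabla}$ together. This is not merely a cosmetic choice: the Poisson bracket $\{\log\lambda,\mathbf{Q}_{p}\widehat{\xi}\}$ then directly produces the term $(\sum_{|\alpha|=1}D^{\alpha}\mathbf{Q}_{p})\cdot(\widehat{\xi}\,\partial_{\xi}^{\alpha}\log\lambda)$, and it is the combination $\widehat{\xi}\,\partial_{\xi}^{\alpha}\log\lambda$ that lands in $S_{1,0}^{0}$ and becomes $H_{0}$. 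In your three-factor version the middle factor $\Lambda\mathbf{Q}_{p}\Lambda^{-1}$ has Poisson-bracket term $(\sum D^{\alpha}\mathbf{Q}_{p})\,\partial_{\xi}^{\alpha}\log\lambda$ with $\partial_{\xi}^{\alpha}\log\lambda\in S_{1,0}^{-1}$, not $S_{1,0}^{0}$; your assertion that $H_{0}\in\func{Op}(S_{1,0}^{0})$ already at that stage is incorrect. You would recover the correct order only after composing with the $\widehat{\nabla}$ from the right factor, which you do not carry out.

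The more serious omission is the observation, emphasized twice in the paper's proof, that $\psi$ does not depend on $\xi$ inside the cone $\Gamma$. This is what kills the logarithmic factors in $\partial_{\xi}^{\alpha}\log\lambda$: in $\Gamma$ one has $\log\lambda=(\gamma-N_{0}\psi)\log|\xi|$ with $\psi$ a function of $x$ only, so $\partial_{\xi}^{\alpha}\log\lambda\in S_{1,0}^{-|\alpha|}$ with \emph{no} $\log\langle\xi\rangle$ growth. Without this, $\widehat{\xi}\,\partial_{\xi}^{\alpha}\log\lambda$ would only lie in $S_{1,0}^{0,1}$, not $S_{1,0}^{0}$, and the second-order remainder (your $\tilde{H}_{0}$) would sit in $C^{0,\delta}S_{1,0}^{0,2}$ rather than $C^{0,\delta}S_{1,0}^{0}$. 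These log-free classes are exactly what the statement of the lemma asserts and what the next lemma needs (there one uses $\|H_{0}f\|\lesssim\|f\|$, which fails for symbols in $S_{1,0}^{0,1}$). Your proposal never invokes this property of $\psi$, so as written it cannot deliver the stated symbol classes for $H_{0}$ and $\tilde{H}_{0}$.
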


\begin{proof}
In constructing the symbol of $\mathbf{E}$ we will work formally, ignoring
the cutoff functions $\eta _{1}$ and $\eta _{2}$. This is permissible by
pseudolocality since $\eta _{1}\eta _{2}=\eta _{2}$. Let $\mathbf{L}%
_{2}\equiv \widehat{\nabla }\cdot \mathbf{Q}_{p}\widehat{\nabla }$, the
desired equation $\left( \mathbf{L}_{2}+\mathbf{E}\right) \Lambda =\Lambda 
\mathbf{L}_{2}+\mathbf{R}$ is then equivalent to 
\begin{align}
\mathbf{E}& =\Lambda \mathbf{L}_{2}\Lambda ^{-1}-\mathbf{L}_{2}+\mathbf{R}%
\Lambda ^{-1}  \notag \\
& =\Lambda \widehat{\nabla }\cdot \mathbf{Q}_{p}\widehat{\nabla }\Lambda
^{-1}-\widehat{\nabla }\cdot \mathbf{Q}_{p}\widehat{\nabla }+\mathbf{R}%
\Lambda ^{-1}  \notag \\
& =\left( \Lambda \widehat{\nabla }\Lambda ^{-1}\right) \cdot \left( \Lambda 
\mathbf{Q}_{p}\widehat{\nabla }\Lambda ^{-1}\right) -\widehat{\nabla }\cdot 
\mathbf{Q}_{p}\widehat{\nabla }+\mathbf{R}\Lambda ^{-1}.
\end{align}%
Next using Lemma \ref{lem:aux} we have 
\begin{equation*}
\Lambda \widehat{\nabla }\Lambda ^{-1}=\widehat{\nabla }+\{\log \lambda ,%
\widehat{\xi }\}+\text{symbol in }S_{1,0}^{-1,1}\equiv \widehat{\nabla }%
+H+H_{3},
\end{equation*}%
where $H=\{\log \lambda ,\widehat{\xi }\}\in \limfunc{Op}\left(
S_{1,0}^{0,1}\right) $, and $H_{3}\in \limfunc{Op}\left(
S_{1,0}^{-1,1}\right) $. To estimate $\Lambda \mathbf{Q}_{p}\widehat{\nabla }%
\Lambda ^{-1}$ we will need a refinement of Lemma \ref{lem:aux}, namely, the
estimate obtained in the proof 
\begin{eqnarray*}
\frac{1}{\lambda }\left\{ \lambda \odot \sigma \left( P\right) -\sigma
\left( P\right) \odot \lambda \right\} &=&\left\{ \sum_{\left\vert \alpha
\right\vert =1}+\sum_{\left\vert \alpha \right\vert =2}\right\} c_{\alpha }\ %
\left[ \frac{\partial _{\xi }^{\alpha }\lambda }{\lambda }\ \partial
_{x}^{\alpha }\sigma \left( P\right) -\partial _{\xi }^{\alpha }\sigma
\left( P\right) \ \frac{\partial _{x}^{\alpha }\lambda }{\lambda }\right] +S
\\
&=&\left\{ \log \lambda ,\sigma \left( P\right) \right\} +\sum_{\left\vert
\alpha \right\vert =2}c_{\alpha }\ \left[ \partial _{\xi }^{\alpha }\log
\lambda \ \partial _{x}^{\alpha }\sigma \left( P\right) -\partial _{\xi
}^{\alpha }\sigma \left( P\right) \ \partial _{x}^{\alpha }\log \lambda %
\right] +S\ ,
\end{eqnarray*}%
where $S\in \mathcal{O}_{\left( -\nu ,\nu \right) }^{-1-\varepsilon }$ for
some $0<\varepsilon <1$ and $0<\nu <\delta $. Now $\sigma (P)=\sigma (%
\mathbf{Q}_{p}\widehat{\nabla })=\mathbf{Q}_{p}\widehat{\xi }$, so 
\begin{equation*}
\sum_{\left\vert \alpha \right\vert =2}c_{\alpha }\ \left[ \partial _{\xi
}^{\alpha }\log \lambda \ \partial _{x}^{\alpha }\sigma \left( P\right)
-\partial _{\xi }^{\alpha }\sigma \left( P\right) \ \partial _{x}^{\alpha
}\log \lambda \right] =\sum_{\left\vert \alpha \right\vert =2}c_{\alpha
}\partial _{\xi }^{\alpha }\log \lambda \ \partial _{x}^{\alpha }\mathbf{Q}%
_{p}\widehat{\xi }=\text{symbol in }\mathcal{C}^{0,\delta }S_{1,0}^{-1,0},
\end{equation*}%
where the last equality holds since $\psi $ does not depend on $\xi $ in $%
\Gamma $, and therefore no logarithmic terms arise from differentiation of $%
\log \lambda $ with respect to $\xi $. Altogether we thus have 
\begin{align*}
\Lambda \mathbf{Q}_{p}\widehat{\nabla }\Lambda ^{-1}& =\mathbf{Q}_{p}%
\widehat{\nabla }+\limfunc{Op}\left( \{\log \lambda ,\mathbf{Q}_{p}\widehat{%
\xi }\}\right) +\text{ symbol in }\mathcal{C}^{0,\delta }S_{1,0}^{-1,0}\ \ \
\ \ \func{mod}\mathcal{O}_{\left( -\delta ,\delta \right) }^{-1-\varepsilon }
\\
& =\mathbf{Q}_{p}\widehat{\nabla }+\sum_{|\alpha |=1}(D^{\alpha }\mathbf{Q}%
_{p})\widehat{\xi }D_{\xi }^{\alpha }\log \lambda +\mathbf{Q}_{p}\cdot
\{\log \lambda ,\widehat{\xi }\}+\text{symbol in }\mathcal{C}^{0,\delta
}S_{1,0}^{-1,0}\ \ \ \ \ \func{mod}\mathcal{O}_{\left( -\delta ,\delta
\right) }^{-1-\varepsilon } \\
& =\mathbf{Q}_{p}\widehat{\nabla }+\sum_{|\alpha |=1}(D^{\alpha }\mathbf{Q}%
_{p})\cdot \text{ symbol in }S_{1,0}^{0}+\mathbf{Q}_{p}\cdot H+\text{symbol
in }\mathcal{C}^{0,\delta }S_{1,0}^{-1,0}\ \ \ \ \ \func{mod}\mathcal{O}%
_{\left( -\delta ,\delta \right) }^{-1-\varepsilon }
\end{align*}%
where we note that $\widehat{\xi }D_{\xi }^{\alpha }\log \lambda \in
S_{1,0}^{0}$ for each $\alpha $ with $|\alpha |=1$ since $\psi $ does not
depend on $\xi $, and therefore no logarithmic terms arise from
differentiation of $\log \lambda $ with respect to $\xi $. This gives 
\begin{align*}
\left( \Lambda \widehat{\nabla }\Lambda ^{-1}\right) \cdot \left( \Lambda 
\mathbf{Q}_{p}\widehat{\nabla }\Lambda ^{-1}\right) & =\widehat{\nabla }%
\cdot \mathbf{Q}_{p}\widehat{\nabla }+H\circ \mathbf{Q}_{p}\widehat{\nabla }%
+H_{3}\circ \mathbf{Q}_{p}\widehat{\nabla }+\widehat{\nabla }\left(
\sum_{|\alpha |=1}D^{\alpha }\mathbf{Q}_{p}\right) \circ H_{0} \\
& \quad +H\circ \left( \sum_{|\alpha |=1}D^{\alpha }\mathbf{Q}_{p}\right)
\circ H_{0}+H\circ \mathbf{Q}_{p}H+\tilde{H}_{0}\ \ \ \ \ \func{mod}\mathcal{%
O}_{\left( -\delta ,\delta \right) }^{-\varepsilon }.
\end{align*}
\end{proof}

\begin{lemma}
Let $\mathbf{E}$ be a pseudodifferential operator of the form (\ref{E form p}
). Then for any fixed relatively compact subset $U\subset V$, any $\delta >0$
and any $f\in C_{c}^{\infty }$ supported in $U$, we have 
\begin{equation}
\left\vert \left\langle \mathbf{E}f,f\right\rangle \right\vert \leq \delta
\sum_{j}\left\Vert X_{j}f\right\Vert ^{2}+\delta ||\sqrt{a}\widehat{\nabla }%
f||^{2}+C_{\delta }\left\Vert f\right\Vert ^{2}+C_{\delta }\left\Vert 
\limfunc{Op}\left( p\right) f\right\Vert _{H^{1}}^{2}\ .  \label{E bound p}
\end{equation}
\end{lemma}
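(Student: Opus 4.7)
The plan is to estimate each of the six building-block operators appearing in the decomposition (\ref{E form p}) separately, relying on three unifying ideas. First, integration by parts pairs every $\widehat{\nabla}$ with a factor of $\sqrt{\mathbf{Q}_p}$, where I interpret $\|\sqrt{\mathbf{Q}_p}v\|^{2}:=\int\langle\mathbf{Q}_pv,v\rangle\,dx\approx\|\sqrt{a}v\|^{2}$ by the comparability $\mathbf{Q}_p\sim a\mathbb{I}$. Second, the subordinacy of $\mathbf{Q}_p$ gives the pointwise bound $|(D^{\alpha}\mathbf{Q}_p)v|\leq C\sqrt{a}\,|v|$, which converts every occurrence of $D^{\alpha}\mathbf{Q}_p$ into a $\sqrt{a}$-weighted norm. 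Third, the second inequality of hypothesis (\ref{superlog-G}) controls $\|\sqrt{\mathbf{Q}_p}H^{\func{tr}}f\|^{2}$, since the principal symbol of $H=\func{Op}\{\log\lambda,\widehat{\xi}\}$ coincides with $-N_{0}\log|\xi|\{\psi,\widehat{\xi}\}$ modulo a symbol in $S_{1,0}^{0}$ (a direct Poisson-bracket computation using that $\widehat{\xi}$ is independent of $x$).

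For the two $\widehat{\nabla}$-bearing pieces $H\circ\mathbf{Q}_p\widehat{\nabla}$ and $H_3\circ\mathbf{Q}_p\widehat{\nabla}$ I move the pseudodifferential factor across via the adjoint and apply Cauchy--Schwarz in the weighted inner product $\langle\mathbf{Q}_p\cdot,\cdot\rangle$, obtaining
\begin{equation*}
|\langle H\mathbf{Q}_p\widehat{\nabla}f,f\rangle|=|\langle\mathbf{Q}_p\widehat{\nabla}f,H^{\func{tr}}f\rangle|\leq \|\sqrt{\mathbf{Q}_p}\widehat{\nabla}f\|\cdot\|\sqrt{\mathbf{Q}_p}H^{\func{tr}}f\|,
\end{equation*}
after which an AM--GM step with a small parameter $\eta$ combined with (\ref{superlog-G}), applied with its internal parameter set to $\eta/(C_{\eta}N_{0}^{2})$, delivers the desired bound. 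For $H_{3}\in\func{Op}(S_{1,0}^{-1,1})$ the negative order makes $\|\sqrt{\mathbf{Q}_p}H_{3}^{\func{tr}}f\|\leq C\|f\|$ directly. The symmetric term $H\circ\mathbf{Q}_p\circ H$ is treated identically, yielding $\|\sqrt{\mathbf{Q}_p}Hf\|\cdot\|\sqrt{\mathbf{Q}_p}H^{\func{tr}}f\|$, both factors of which are controlled by (\ref{superlog-G}) via AM--GM.

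The three terms containing $D^{\alpha}\mathbf{Q}_p$ are the subtlest, since we need $(D^{\alpha}\mathbf{Q}_p)v$ to combine correctly with either $\widehat{\nabla}$ or $H$. For $\widehat{\nabla}(D^{\alpha}\mathbf{Q}_p)\circ H_{0}$ I integrate by parts to shift $\widehat{\nabla}$ onto the second $f$, reducing to $|\langle H_{0}f,(D^{\alpha}\mathbf{Q}_p)^{\func{tr}}\widehat{\nabla}f\rangle|\leq C\|H_{0}f\|\cdot\|\sqrt{a}\widehat{\nabla}f\|\leq C\|f\|\cdot\|\sqrt{a}\widehat{\nabla}f\|$ via subordinacy and the $L^{2}$-boundedness of the order-zero operator $H_{0}$. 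For $H\circ(D^{\alpha}\mathbf{Q}_p)\circ H_{0}$ I factor the multiplier as $D^{\alpha}\mathbf{Q}_p=\sqrt{a}\,R_{\alpha}$ with $|R_{\alpha}|\leq C$ (again by subordinacy), then take the adjoint of $H$ to obtain
\begin{equation*}
|\langle H\sqrt{a}R_{\alpha}H_{0}f,f\rangle|=|\langle R_{\alpha}H_{0}f,\sqrt{a}H^{\func{tr}}f\rangle|\leq C\|f\|\cdot\|\sqrt{a}H^{\func{tr}}f\|\approx C\|f\|\cdot\|\sqrt{\mathbf{Q}_p}H^{\func{tr}}f\|,
\end{equation*}
which is bounded by (\ref{superlog-G}) after one more AM--GM split. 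The residual operators $\tilde{H}_{0}\in\func{Op}(C^{0,\delta}S_{1,0}^{0})$ and the error $\mathcal{O}_{(-\delta,\delta)}^{-\varepsilon}$ contribute only $C\|f\|^{2}$ by their standard mapping properties.

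The main obstacle I anticipate is the careful bookkeeping required to combine Cauchy--Schwarz, subordinacy, and the logarithmic hypothesis (\ref{superlog-G}) so that all leading-order contributions are absorbed into the small-$\eta$ sides of the inequalities, while residues from negative-order operators and the rough pseudodifferential calculus (Theorem \ref{Saw calc}) land safely in the $C_{\eta}\|f\|^{2}$ and $C_{\eta}\|\func{Op}(p)f\|_{H^{1}}^{2}$ terms. Once this bookkeeping is in place, summing the six estimates yields (\ref{E bound p}).
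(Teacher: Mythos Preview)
Your proposal is correct and follows essentially the same approach as the paper's proof: both rewrite $\langle\mathbf{E}f,f\rangle$ term by term using adjoints, apply Cauchy--Schwarz in the $\mathbf{Q}_{p}$-weighted inner product, invoke subordinacy to handle the $D^{\alpha}\mathbf{Q}_{p}$ factors, identify $\sigma(H)=-N_{0}\log|\xi|\{\psi,\widehat{\xi}\}$, and close with the second line of (\ref{superlog-G}). One minor point: your factorization $D^{\alpha}\mathbf{Q}_{p}=\sqrt{a}\,R_{\alpha}$ is problematic where $a$ vanishes, but you only need the pointwise bound $|(D^{\alpha}\mathbf{Q}_{p})v|\leq C\sqrt{a}\,|v|$ coming directly from subordinacy, which you already state and which suffices for the estimate.
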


\begin{proof}
Here is where we will need to use that the matrix $\mathbf{Q}_{p}$ is
subordinate - in the case $p=n$, then $\mathbf{Q}_{n}$ is simply a scalar
and the subordinate inequality is that of Malgrange. We will use (\ref{E
form p}) and the notation $\mathbf{Q}_{p}^{\prime }=\sum_{|\alpha
|=1}D^{\alpha }\mathbf{Q}_{p}$. We have 
\begin{eqnarray*}
\left\langle \mathbf{E}f,f\right\rangle &=&-\left\langle \sqrt{\mathbf{Q}_{p}%
}H^{\func{tr}}f,\sqrt{\mathbf{Q}_{p}}\widehat{\nabla }f\right\rangle
-\left\langle H_{0}f,\mathbf{Q}_{p}^{\prime }\widehat{\nabla }f\right\rangle
+\left\langle H_{0}f,\mathbf{Q}_{p}^{\prime }H^{\func{tr}}f\right\rangle \\
&&+\left\langle H_{3}^{\func{tr}}f,\mathbf{Q}_{p}\nabla ^{\prime
}f\right\rangle +\left\langle \sqrt{\mathbf{Q}_{p}}Hf,\sqrt{\mathbf{Q}_{p}}%
H^{\func{tr}}f\right\rangle +\left\langle H_{0}f,f\right\rangle .
\end{eqnarray*}%
Now we use the crucial fact that $\mathbf{Q}_{p}$ is subordinate, i.e. $%
\left\vert \mathbf{Q}_{p}^{\prime }\right\vert ^{2}\leq C\mathbf{Q}_{p}$,
and together with Cauchy-Schwartz this gives 
\begin{equation*}
\left\vert \left\langle \mathbf{E}f,f\right\rangle \right\vert \leq \delta
\left\Vert \sqrt{\mathbf{Q}_{p}}\widehat{\nabla }f\right\Vert ^{2}+C_{\delta
}\left\Vert \sqrt{\mathbf{Q}_{p}}H^{\func{tr}}f\right\Vert ^{2}+C_{\delta
}\left\Vert \sqrt{\mathbf{Q}_{p}}Hf\right\Vert ^{2}+C_{\delta }\left\Vert
f\right\Vert ^{2}.
\end{equation*}%
Finally, using the definition of $\lambda $ we obtain 
\begin{equation*}
\sigma (H)=\{\log \lambda ,\widehat{\xi }\}=-N_{0}\log |\xi |\{\psi ,%
\widehat{\xi }\},
\end{equation*}%
which together with the fact that $\mathbf{Q}_{p}\approx a\mathbb{I}_{n-p+1}$
shows 
\begin{equation*}
\left\vert \left\langle \mathbf{E}f,f\right\rangle \right\vert \leq \delta
\left\Vert \sqrt{a}\widehat{\nabla }f\right\Vert ^{2}+C_{\delta }\left\Vert 
\sqrt{a}\limfunc{Op}\left( \log \left\langle \xi \right\rangle \{\psi ,%
\widehat{\xi }\}\right) f\right\Vert ^{2}+C_{\delta }\left\Vert f\right\Vert
^{2}\ .
\end{equation*}%
Combining with estimate (\ref{superlog-G}) as in the proof of Lemma \ref%
{4.6'} we conclude (\ref{E bound p}).
\end{proof}

Finally, we obtain an estimate on the subunit term $R_{1}$.

\begin{lemma}
\label{lem:R1-G} Let $R_{1}=\sum_{k=1}^{n}S_{k}\Theta _{k}\circ \widehat{%
\nabla }$, where each $S_{k}\in C^{1,\delta }(\mathbb{R}^{m\times m})$ is
subunit with respect to $\mathbf{Q}_{p}$, and $\Theta _{k}=(\Theta
_{kp},\dots ,\Theta _{kn})$ is a multiplier of order zero. 
Then 
\begin{equation}
(R_{1}\eta _{1}+J)\Lambda \eta _{2}=\eta _{1}\Lambda R_{1}\eta _{2}+R,
\label{R1-est-G}
\end{equation}%
where $J\in \limfunc{Op}(\mathcal{C}^{0,\delta }S_{1,0}^{0,1})$, $R\in 
\mathcal{O}_{\left( -\delta ,\delta \right) }^{-1-\varepsilon }$, and 
\begin{equation}
\left\vert \left\langle Jf,f\right\rangle \right\vert \leq \delta
\sum_{j}\left\Vert X_{j}f\right\Vert ^{2}+\delta ||\sqrt{a}\widehat{\nabla }%
f||^{2}+C_{\delta }\left\Vert f\right\Vert ^{2}+C_{\delta }\left\Vert 
\limfunc{Op}\left( p\right) f\right\Vert _{H^{1}}^{2}\ ,  \label{J-est-G}
\end{equation}%
any $\delta >0$ and any $f\in C_{c}^{\infty }$.
\end{lemma}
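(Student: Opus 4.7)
The strategy follows the same pattern as Lemmas \ref{lem form G}, \ref{lem: E}, and \ref{4.6'}. Working formally and dropping the cutoffs (justified by pseudolocality since $\eta_1\eta_2=\eta_2$), the identity (\ref{R1-est-G}) is equivalent to finding $J$ such that
$J = \Lambda R_1 \Lambda^{-1} - R_1 \pmod{\mathcal{O}_{(-\delta,\delta)}^{-1-\varepsilon}}$.
Since $R_1 = \sum_{k=1}^{n} S_k \Theta_k \circ \widehat{\nabla}$, I would insert $\Lambda^{-1}\Lambda$ between each factor and write
$\Lambda R_1 \Lambda^{-1} = \sum_k \bigl(\Lambda S_k \Lambda^{-1}\bigr)\bigl(\Lambda \Theta_k \Lambda^{-1}\bigr)\bigl(\Lambda \widehat{\nabla}\Lambda^{-1}\bigr)$.

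The next step is to expand each of these three conjugations using Lemma \ref{lem:aux} and Theorem \ref{Saw calc}. Writing $\Lambda S_k\Lambda^{-1} = S_k + \mathrm{Op}\{\log\lambda,S_k\} + (\text{lower})$, $\Lambda\Theta_k\Lambda^{-1} = \Theta_k + (\text{symbol in }S_{1,0}^{-1,1})$, and $\Lambda\widehat{\nabla}\Lambda^{-1}= \widehat{\nabla} + H + H_3$ with $H=\mathrm{Op}\{\log\lambda,\widehat{\xi}\}\in \mathrm{Op}(S_{1,0}^{0,1})$ and $H_3\in\mathrm{Op}(S_{1,0}^{-1,1})$, and then multiplying the three expansions out, the contribution $R_1$ itself cancels and the remainder $J$ splits into a leading part in $\mathrm{Op}(\mathcal{C}^{0,\delta}S_{1,0}^{0,1})$ together with remainders absorbed into $\mathcal{O}_{(-\delta,\delta)}^{-1-\varepsilon}$. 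Crucially, every surviving summand is either of the schematic form $S_k\Theta_kH$ (or $S_k\Theta_kH_3$) with $\widehat{\nabla}$ already consumed, or of the form $(\mathrm{Op}\{\log\lambda,S_k\})\Theta_k\widehat{\nabla}$ in which the derivative still acts and is preceded by a subunit factor derived from $S_k$.

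For the quadratic form estimate (\ref{J-est-G}), I would imitate the proof of Lemma \ref{lem: E}. Pair $\langle Jf,f\rangle$ so that in each term one factor of $\sqrt{\mathbf{Q}_p}$ lands on each side of the inner product: the subunit hypothesis $S_k^{\mathrm{tr}}S_k \preccurlyeq C\mathbf{Q}_p$ lets the terms with a free $\widehat{\nabla}$ be bounded by $\|\sqrt{\mathbf{Q}_p}\widehat{\nabla}f\|\cdot(\text{lower})$, and using $\mathbf{Q}_p \sim a\mathbb{I}_{n-p+1}$ this becomes $\|\sqrt{a}\widehat{\nabla}f\|$. The terms containing $H$ are rewritten via $\sigma(H)=-N_0\log|\xi|\{\psi,\widehat{\xi}\}$, and hypothesis (\ref{superlog-G}) (second inequality) absorbs $\|\sqrt{a}\,\mathrm{Op}[\log\langle\xi\rangle\{\psi,\widehat{\xi}\}]f\|^2$ into $\delta\sum_j\|X_jf\|^2+\delta\|\sqrt{a}\widehat{\nabla}f\|^2+C_\delta\|f\|^2+C_\delta\|\mathrm{Op}(p)f\|_{H^1}^2$. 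The Cauchy–Schwarz $\varepsilon$/$\delta$-absorption is standard.

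The main obstacle I expect is bookkeeping the roughness: differentiating $S_k\in C^{1,\delta}$ via the Poisson bracket $\{\log\lambda,S_k\}$ drops one derivative, producing only a $C^{0,\delta}$ coefficient, so I must ensure these pieces end up in the class $\mathcal{O}_{(-\delta,\delta)}^{-1-\varepsilon}$ when they are not paired with a derivative that can exploit the subunit bound. This is why the conjugation produces a single symbol in $\mathcal{C}^{0,\delta}S_{1,0}^{0,1}$ (which Bourdaud's Theorem \ref{rough bound} controls on the right range of Sobolev spaces) rather than anything worse, and why the symbol class $C^{1,\delta}$ is exactly the minimal regularity needed for $S_k$. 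Once this accounting is done, the bound (\ref{J-est-G}) is immediate from the subunit and subordinate hypotheses combined with (\ref{superlog-G}).
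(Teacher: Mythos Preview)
Your strategy is correct and matches the paper's: the paper conjugates each summand $P_k=S_k\Theta_k\circ\widehat{\nabla}$ as a single first--order operator (rather than splitting into three separate conjugations), but by the Leibniz rule for Poisson brackets the two organizations coincide, since
\[
\{\log\lambda,\;S_k(x)\theta_k(\xi)\widehat{\xi}\}
=S_k\{\log\lambda,\theta_k\widehat{\xi}\}+\{\log\lambda,S_k\}\theta_k\widehat{\xi},
\]
which are exactly your ``$S_k\Theta_kH$'' and ``$(\mathrm{Op}\{\log\lambda,S_k\})\Theta_k\widehat{\nabla}$'' pieces.

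There is one point where your sketch would not go through as written. You describe $(\mathrm{Op}\{\log\lambda,S_k\})\Theta_k\widehat{\nabla}$ as a term ``in which the derivative still acts and is preceded by a subunit factor derived from $S_k$,'' and plan to control it via $\|\sqrt{\mathbf{Q}_p}\widehat{\nabla}f\|$. But $\{\log\lambda,S_k\}=(\nabla_\xi\log\lambda)\cdot(\nabla_x S_k)$ (as $S_k$ is $\xi$--independent), and $\nabla_x S_k$ is \emph{not} subunit, so this route fails. The correct observation---and the one the paper uses---is that because $\psi$ is $\xi$--independent in $\Gamma$, one has $\nabla_\xi\log\lambda\in S_{1,0}^{-1}$ with \emph{no} logarithmic loss; hence $\{\log\lambda,S_k\}\in \mathcal{C}^{0,\delta}S_{1,0}^{-1}$, and after composition with $\Theta_k\widehat{\nabla}$ the whole term lies in $\mathcal{C}^{0,\delta}S_{1,0}^{0}$. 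It therefore belongs to $J$ (not to the remainder $\mathcal{O}_{(-\delta,\delta)}^{-1-\varepsilon}$ as you suggest in your last paragraph) and contributes only to $C_\delta\|f\|^2$ in (\ref{J-est-G}). This is precisely the ``symbol in $C^{0,\delta}S_{1,0}^{0}$'' appearing in the paper's displayed expansion. The subunit property of $S_k$ is used only on the $S_kH_k$ piece, via (\ref{superlog-G}), exactly as you indicate.
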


\begin{proof}
Proceeding as in the proof of Lemma \ref{lem: E} we have 
\begin{align*}
\Lambda S_{k}\Theta _{k}\circ \widehat{\nabla}\Lambda ^{-1}& =S_{k}\Theta
_{k}\circ \widehat{\nabla}+\sum_{|\alpha |=1}(D^{\alpha }S_{k})\widehat{\xi }%
\theta _{k}(\xi )D_{\xi }^{\alpha }\log \lambda +S_{k}\{\log \lambda ,%
\widehat{\xi }\theta _{k}(\xi )\}+\text{symbol in }C^{0,\delta
}S_{1,0}^{-1,0}\ \ \ \ \ \func{mod}\mathcal{O}_{\left( -\delta ,\delta
\right) }^{-1-\varepsilon } \\
& =S_{k}\Theta _{k}\circ \widehat{\nabla}+\text{symbol in }C^{0,\delta
}S_{1,0}^{0}+S_{k}H_{k}+\text{symbol in }C^{0,\delta }S_{1,0}^{-1,0}\ \ \ \
\ \func{mod}\mathcal{O}_{\left( -\delta ,\delta \right) }^{-1-\varepsilon }
\\
& \equiv S_{k}\Theta _{k}\circ \widehat{\nabla}+J_{k}\ \ \ \ \ \func{mod}%
\mathcal{O}_{\left( -\delta ,\delta \right) }^{-1-\varepsilon }.
\end{align*}%
where $H_{k}\in \limfunc{Op}\left( S_{1,0}^{0,1}\right) $. Defining $J\equiv
\sum_{k=1}^{n}J_{k}$ and using the fact that $S_{k}$ is subunit together
with $\mathbf{Q}_{p}\approx a\mathbb{I}_{n-p+1}$ and (\ref{superlog-G}) we
obtain (\ref{J-est-G}).
\end{proof}

We are now ready to prove a generalization of Lemma 4.4 in \cite{Chr}, which
is the main estimate we need.

\begin{lemma}[Lemma 4.4 in \protect\cite{Chr}]
\label{pre}Let $L$ take the form (\ref{operator-Grushin}) and satisfy (\ref%
{Christ_cond1-G}) and (\ref{superlog-G}). Let $0<\gamma <\delta $ be fixed.
If $N_{0}$ is chosen sufficiently large in the definition of $\Lambda $,
then for any fixed relatively compact $U\Subset V$ and any $u\in C^{2,\delta
}\left( U\right) $, 
\begin{equation}
\left\Vert \eta _{1}\Lambda u\right\Vert _{L^{2}\left( \mathbb{R}^{n}\right)
}\leq C\left\Vert \eta _{1}\Lambda Lu\right\Vert _{L^{2}\left( \mathbb{R}%
^{n}\right) }+C\left\Vert u\right\Vert _{H^{0}\left( \mathbb{R}^{n}\right) }.
\label{pre hypo}
\end{equation}
\end{lemma}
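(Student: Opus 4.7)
The plan is to test the commuted equation against $f \equiv \eta_{1}\Lambda u$, extract positivity from the principal part of $L$, and then close the estimate using the superlogarithmic G\aa rding-type hypothesis (\ref{Christ_cond1-G}). Since $\eta_{2}\equiv 1$ on $\func{supp} u$ we have $f=\eta_{1}\Lambda \eta_{2}u$. Combining Lemma \ref{lem form G}, Lemma \ref{lem: E}, and Lemma \ref{lem:R1-G} we obtain the single commutation identity
\[
\bigl(L_{1}+G+R_{1}+J+\widehat{\nabla}^{\func{tr}}\mathbf{Q}_{p}\widehat{\nabla}+\mathbf{E}\bigr)f=\eta_{1}\Lambda L u+\mathcal{R}u,
\]
where $\mathcal{R}$ collects the smoothing remainders produced by the three lemmas, each of class $\mathcal{O}_{(-\delta,\delta)}^{-\varepsilon}$ for some $\varepsilon>0$, so that $\|\mathcal{R}u\|_{L^{2}}\leq C\|u\|_{H^{0}}$.

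Next pair with $f$ in $L^{2}$. Integration by parts on the positive semidefinite principal part $\sum_{j}X_{j}^{\func{tr}}X_{j}+\widehat{\nabla}^{\func{tr}}\mathbf{Q}_{p}\widehat{\nabla}$ produces $\sum_{j}\|X_{j}f\|^{2}+\|\sqrt{\mathbf{Q}_{p}}\,\widehat{\nabla}f\|^{2}$, which by $\mathbf{Q}_{p}\approx a\mathbb{I}_{n-p+1}$ is equivalent to $\sum_{j}\|X_{j}f\|^{2}+\|\sqrt{a}\,\widehat{\nabla}f\|^{2}$. The mixed lower-order pieces in $L_{1}$, namely $\sum A_{j}X_{j}$, $\sum X_{j}^{\func{tr}}\tilde A_{j}$, and $A_{0}$, are absorbed by Cauchy-Schwarz, while the four error inner products $\langle Gf,f\rangle$, $\langle \mathbf{E}f,f\rangle$, $\langle Jf,f\rangle$, and $\langle R_{1}f,f\rangle$ are bounded using Lemma \ref{4.6'}, inequality (\ref{E bound p}), inequality (\ref{J-est-G}), and the subordinateness of the $S_{k}$ against $\mathbf{Q}_{p}$ respectively, each by
\[
\delta\sum_{j}\|X_{j}f\|^{2}+\delta\|\sqrt{a}\,\widehat{\nabla}f\|^{2}+C_{\delta}\|f\|^{2}+C_{\delta}\|\func{Op}(p)f\|_{H^{1}}^{2}.
\]
Choosing $\delta$ small enough to absorb these into the positive side and invoking Cauchy-Schwarz on $\langle \eta_{1}\Lambda Lu,f\rangle$, one arrives at
\[
\sum_{j}\|X_{j}f\|^{2}+\|\sqrt{a}\,\widehat{\nabla}f\|^{2}\leq C\|\eta_{1}\Lambda Lu\|_{L^{2}}^{2}+C\|f\|^{2}+C\|\func{Op}(p)f\|_{H^{1}}^{2}+C\|u\|_{H^{0}}^{2}.
\]

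Now the superlogarithmic G\aa rding-type hypothesis (\ref{Christ_cond1-G}) upgrades the left side to $c\int w^{2}(\xi)|\widehat f(\xi)|^{2}d\xi$, and since $w(\xi)\to\infty$ at infinity, a standard high/low frequency split gives, for any $M>0$, a radius $R=R(M)$ with $\|f\|^{2}\leq M^{-2}\int w^{2}|\widehat f|^{2}d\xi+C_{R}\|u\|_{L^{2}}^{2}$, the low-frequency piece being controlled because $\eta_{1}\Lambda$ is bounded on frequencies $|\xi|\leq R$. Taking $M$ large absorbs $\|f\|^{2}$ back into the left side. The remaining term $\|\func{Op}(p)f\|_{H^{1}}$ is genuinely lower order: since $p\equiv 0$ in a conic neighborhood of $\overline{\Gamma}$ while $\psi\geq 1$ outside $\Gamma$, on the support of $p$ the symbol $\lambda$ decays like $|\xi|^{\gamma-\theta N_{0}}$, so $\func{Op}(p)\eta_{1}\Lambda$ is smoothing of order $\theta N_{0}-\gamma$; for $N_{0}$ sufficiently large (depending on the fixed integer $K$ with $u\in H^{-K}$), this gives $\|\func{Op}(p)f\|_{H^{1}}\leq C\|u\|_{H^{-K}}\leq C\|u\|_{L^{2}}$. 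Combining yields (\ref{pre hypo}).

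The main obstacle is this last step: simultaneously choosing $N_{0}$ large enough that the $\func{Op}(p)f$ tail is of lower order, while preserving the $\gamma$-order ellipticity of $\Lambda$ microlocally near $(x_{0},\xi_{0})$, which is the reason $\psi$ is required to vanish in a small conic neighborhood of the ray $R$. A secondary delicacy is that both the Grushin commutator $\mathbf{E}$ and the subunit remainder $J$ generate Poisson-bracket symbols of the form $\{\log\lambda,\widehat{\xi}\}$ rather than $\{\log\lambda,\sigma(X_{j})\}$, so that the second line of the superlogarithmic hypothesis (\ref{superlog-G}) must be used for these pieces, matching precisely the two-line structure of that hypothesis.
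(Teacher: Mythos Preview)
Your proposal is correct and follows essentially the same approach as the paper: combine the commutation identities from Lemmas \ref{lem form G}, \ref{lem: E}, and \ref{lem:R1-G}, pair with $f=\eta_{1}\Lambda u$, extract $\sum_{j}\|X_{j}f\|^{2}+\|\sqrt{a}\,\widehat{\nabla}f\|^{2}$ from the principal part, absorb the error terms $G,\mathbf{E},J,R_{1}$ via (\ref{G bound}), (\ref{E bound p}), (\ref{J-est-G}) and subordinateness, then use (\ref{Christ_cond1-G}) and a high/low frequency split to absorb $\|f\|^{2}$, with $N_{0}$ chosen large so that $\|\func{Op}(p)f\|_{H^{1}}$ is harmless. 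The only cosmetic difference is that the paper treats the three commutation identities and their pairings separately before summing, whereas you fuse them into a single identity at the outset.
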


\begin{proof}
Recall that 
\begin{equation*}
L=\sum_{j}X_{j}^{\func{tr}}X_{j}+\sum_{j}A_{j}X_{j}+\sum_{j}X_{j}^{\func{tr}}%
\tilde{A}_{j}+A_{0}+R_{1}+\widehat{\nabla }\cdot \mathbf{Q}_{p}\widehat{%
\nabla }\ \equiv L_{1}+L_{2}+R_{1},
\end{equation*}%
where we used the notation $L_{2}=\widehat{\nabla }\cdot \mathbf{Q}_{p}%
\widehat{\nabla }$ If we set 
\begin{equation*}
v\equiv \eta _{1}\Lambda u\in C^{2}\left( \mathbb{R}^{n}\right) ,
\end{equation*}%
we have 
\begin{eqnarray*}
\left\langle \left( L_{1}+G\right) v,v\right\rangle &=&\left\langle
L_{1}v,v\right\rangle +\left\langle Gv,v\right\rangle \\
&=&\sum_{j}\left\Vert X_{j}v\right\Vert _{L^{2}}^{2}+\sum_{j}\left\langle
A_{j}\circ X_{j}v,v\right\rangle +\sum_{j}\left\langle X_{j}^{\func{tr}%
}\circ \widetilde{A}_{j}v,v\right\rangle +\left\langle A_{0}v,v\right\rangle
+\left\langle Gv,v\right\rangle \\
&=&\sum_{j}\left\Vert X_{j}v\right\Vert _{L^{2}}^{2}+\sum_{j}\left\langle
X_{j}v,A_{j}^{\func{tr}}v\right\rangle +\sum_{j}\left\langle \widetilde{A}%
_{j}v,X_{j}v\right\rangle +\left\langle A_{0}v,v\right\rangle +\left\langle
Gv,v\right\rangle \\
&=&\sum_{j}\left\Vert X_{j}v\right\Vert _{L^{2}}^{2}+O\left( \sqrt{%
\sum_{j}\left\Vert X_{j}v\right\Vert _{L^{2}}^{2}}\left\Vert v\right\Vert
_{L^{2}}+\left\Vert v\right\Vert _{L^{2}}^{2}\right) +\left\langle
Gv,v\right\rangle ,
\end{eqnarray*}%
since the operators $A_{j}$ and $\widetilde{A}_{j}$ have order $0$.
Similarly 
\begin{eqnarray*}
\left\langle \left( L_{2}+E\right) v,v\right\rangle &=&\left\langle \nabla
^{\prime }\cdot \mathbf{Q}_{p}\widehat{\nabla }v,v\right\rangle
+\left\langle Ev,v\right\rangle \\
&=&\int |\sqrt{\mathbf{Q}_{p}}\widehat{\nabla }v|^{2}+\left\langle
Ev,v\right\rangle , \\
\left\langle \left( R_{1}+J_{0}\right) v,v\right\rangle &=&\left\langle
\sum_{i=1}^{n}S_{i}\Theta _{i}\widehat{\nabla }v,v\right\rangle
+\left\langle Jv,v\right\rangle \\
&\leq &\delta \int a|\widehat{\nabla }v|^{2}+C_{\delta }\left\Vert
v\right\Vert _{L^{2}}^{2}+\left\langle J_{0}v,v\right\rangle .
\end{eqnarray*}%
We also have from Lemmas \ref{4.6'}, \ref{lem: E}, and \ref{lem:R1-G} that 
\begin{align*}
\left( L_{1}+G\right) v=\left( L_{1}+G\right) \eta _{1}\Lambda \eta _{2}u&
=\eta _{1}\Lambda L_{1}\eta _{2}u+Ru=\eta _{1}\Lambda L_{1}u+Ru\ , \\
\left( L_{2}+E\right) v=\left( L_{2}+E\right) \eta _{1}\Lambda \eta _{2}u&
=\eta _{1}\Lambda L_{2}\eta _{2}u+Ru=\eta _{1}\Lambda L_{2}u+Ru\ , \\
\left( R_{1}+J_{0}\right) v=\left( R_{1}+J\right) \eta _{1}\Lambda \eta
_{2}u& =\eta _{1}\Lambda R_{1}\eta _{2}u+Ru=\eta _{1}\Lambda R_{1}u+Ru
\end{align*}%
since $\eta _{2}u=u$, and hence adding together 
\begin{equation*}
\left\vert \left\langle \left( L+G+E+J\right) v,v\right\rangle \right\vert
\leq \left\vert \left\langle \eta _{1}\Lambda L\eta _{2}u,v\right\rangle
\right\vert +\left\vert \left\langle Ru,v\right\rangle \right\vert \leq 
\frac{1}{2}\left\Vert \eta _{1}\Lambda Lu\right\Vert _{L^{2}\left( \mathbb{R}%
^{n}\right) }^{2}+\frac{1}{2}\left\Vert Ru\right\Vert _{L^{2}\left( \mathbb{R%
}^{n}\right) }^{2}+\left\Vert v\right\Vert _{L^{2}\left( \mathbb{R}%
^{n}\right) }^{2}\newline
\end{equation*}

Thus from (\ref{G bound}), (\ref{E bound p}), (\ref{J-est-G}), and the above
we conclude that 
\begin{eqnarray*}
\sum_{j}\left\Vert X_{j}v\right\Vert _{L^{2}}^{2}+||\sqrt{\mathbf{Q}_{p}}%
\widehat{\nabla }v||^{2} &=&\left\langle \left( L_{1}+G\right)
v,v\right\rangle -\left\langle Gv,v\right\rangle +C\left( \sqrt{%
\sum_{j}\left\Vert X_{j}v\right\Vert _{L^{2}}^{2}}\left\Vert v\right\Vert
_{L^{2}}+\left\Vert v\right\Vert _{L^{2}}^{2}\right) \\
&&+\left\langle \left( L_{2}+E\right) v,v\right\rangle -\left\langle
Ev,v\right\rangle \\
&&+\left\langle \left( R_{1}+J\right) v,v\right\rangle -\left\langle
J_{0}v,v\right\rangle -\left\langle \sum_{i=1}^{n}S_{i}\Theta _{i}\widehat{%
\nabla }v,v\right\rangle \\
&\leq &\frac{1}{2}\left\Vert \eta _{1}\Lambda Lu\right\Vert _{L^{2}}^{2}+%
\frac{1}{2}\left\Vert Ru\right\Vert _{L^{2}}^{2}+C_{\delta }\left\Vert
v\right\Vert _{L^{2}}^{2} \\
&&+\delta \sum_{j}\left\Vert X_{j}f\right\Vert _{L^{2}}^{2}+4\delta ||\sqrt{a%
}\widehat{\nabla }v||_{L^{2}}^{2}+C_{\delta }\left\Vert \limfunc{Op}\left(
p\right) v\right\Vert _{H^{1}}^{2} \\
&&+C\left( \sqrt{\sum_{j}\left\Vert X_{j}v\right\Vert _{L^{2}}^{2}}%
\left\Vert v\right\Vert _{L^{2}}+\left\Vert v\right\Vert _{L^{2}}^{2}\right)
.
\end{eqnarray*}%
Combining this with the inequality 
\begin{equation*}
\sqrt{\sum_{j}\left\Vert X_{j}v\right\Vert _{L^{2}}^{2}}\left\Vert
v\right\Vert _{L^{2}}\leq \delta \sum_{j}\left\Vert X_{j}v\right\Vert
_{L^{2}}^{2}+C_{\delta }\left\Vert v\right\Vert _{L^{2}}^{2}\ 
\end{equation*}%
and the condition $\mathbf{Q}_{p}\approx a\mathbb{I}_{n-p+1}$ we obtain,
choosing $\delta $ smaller if necessary, 
\begin{eqnarray*}
\sum_{j}\left\Vert X_{j}v\right\Vert _{L^{2}}^{2}+||\sqrt{a}\widehat{\nabla }%
v||^{2} &\leq &\frac{1}{2}\left\Vert \eta _{1}\Lambda Lu\right\Vert
_{L^{2}}^{2}+\frac{1}{2}\left\Vert Ru\right\Vert _{L^{2}}^{2}+C_{\delta
}\left\Vert v\right\Vert _{L^{2}}^{2}+C_{\delta }\left\Vert \limfunc{Op}%
\left( p\right) v\right\Vert _{H^{1}}^{2} \\
&&+\delta \sum_{j}\left\Vert X_{j}v\right\Vert _{L^{2}}^{2}+\delta ||\sqrt{a}%
\widehat{\nabla }v||_{L^{2}}^{2}.
\end{eqnarray*}%
Absorbing the terms $\delta \sum_{j}\left\Vert X_{j}v\right\Vert
_{L^{2}}^{2} $ and $\delta ||\sqrt{a}\widehat{\nabla }v||_{L^{2}}^{2}$ into
the left hand side, and then using that the order of the error term $R$ is $%
-\varepsilon $, we obtain 
\begin{equation}
\sum_{j}\left\Vert X_{j}v\right\Vert _{L^{2}}^{2}+||\sqrt{a}\widehat{\nabla }%
v||_{L^{2}}^{2}\leq \left\Vert \eta _{1}\Lambda Lu\right\Vert _{L^{2}\left( 
\mathbb{R}^{n}\right) }^{2}+C\left\Vert v\right\Vert
_{L^{2}}^{2}+C\left\Vert u\right\Vert _{H^{-\varepsilon }}^{2}\ ,
\label{est:x-j}
\end{equation}%
where the term involving the $H^{1}$ norm of $\limfunc{Op}\left( p\right)
\Lambda u$ may be absorbed into $\left\Vert u\right\Vert _{H^{-\varepsilon
}}^{2}$ since $\Lambda $ may be made to be regularizing of arbitrary high
order in a conic neighborhood of the symbol $p$, by choosing $N_{0}$ to be
sufficiently large.\newline
Next we write 
\begin{eqnarray*}
\left\Vert v\right\Vert _{L^{2}}^{2} &=&\int_{\left\{ \xi \in \mathbb{R}%
^{n}:\left\vert \xi \right\vert \leq N\right\} }\left\vert \widehat{v}\left(
\xi \right) \right\vert ^{2}d\xi +\int_{\left\{ \xi \in \mathbb{R}%
^{n}:\left\vert \xi \right\vert >N\right\} }\left\vert \widehat{v}\left( \xi
\right) \right\vert ^{2}d\xi \\
&\leq &N^{2\gamma }\int_{\left\{ \xi \in \mathbb{R}^{n}:\left\vert \xi
\right\vert \leq N\right\} }\left\langle \xi \right\rangle ^{-2\gamma
}\left\vert \widehat{v}\left( \xi \right) \right\vert ^{2}d\xi +\frac{1}{%
w^{2}(N)}\int_{\left\{ \xi \in \mathbb{R}^{n}:\left\vert \xi \right\vert
>N\right\} }w^{2}\left( \left\langle \xi \right\rangle \right) \left\vert 
\widehat{v}\left( \xi \right) \right\vert ^{2}d\xi \\
&\leq &N^{2\gamma }\left\Vert u\right\Vert _{H^{0}}^{2}+\frac{1}{w^{2}(N)}%
\left\Vert w\left( \left\langle \xi \right\rangle \right) \widehat{v}\left(
\xi \right) \right\Vert _{L^{2}}^{2} \\
&\leq &N^{2\gamma }\left\Vert u\right\Vert _{H^{0}}^{2}+\frac{C}{w^{2}(N)}%
\left( \sum_{j}\left\Vert X_{j}v\right\Vert _{L^{2}}^{2}+||\sqrt{a}\widehat{%
\nabla }v||_{L^{2}}^{2}+\left\Vert v\right\Vert _{L^{2}}^{2}\right)
\end{eqnarray*}%
where for the last inequality we used (\ref{Christ_cond1-G}). Let $\delta
=C/w^{2}(N)$ and note that $\delta $ can be made arbitrarily small by
choosing $N$ sufficiently large, we combine the above equality with (\ref%
{est:x-j}) to obtain 
\begin{align*}
\left\Vert v\right\Vert _{L^{2}}^{2}& \leq C_{\delta }\left\Vert
u\right\Vert _{H^{0}}^{2}+\delta \left( \sum_{j}\left\Vert X_{j}v\right\Vert
_{L^{2}}^{2}+||\sqrt{a}\widehat{\nabla }v||_{L^{2}}^{2}+\left\Vert
v\right\Vert _{L^{2}}^{2}\right) \\
& \leq C_{\delta }\left\Vert u\right\Vert _{H^{0}}^{2}+\delta \left(
\left\Vert \eta _{1}\Lambda Lu\right\Vert _{L^{2}\left( \mathbb{R}%
^{n}\right) }^{2}+C\left\Vert v\right\Vert _{L^{2}}^{2}+C\left\Vert
u\right\Vert _{H^{-\varepsilon }}^{2}\right)
\end{align*}%
Choosing $\delta $ sufficiently small to absorb the norm $\left\Vert
v\right\Vert _{L^{2}}^{2}$ to the left hand side we conclude 
\begin{equation*}
\left\Vert \eta _{1}\Lambda u\right\Vert _{L^{2}\left( \mathbb{R}^{n}\right)
}^{2}=\left\Vert v\right\Vert _{L^{2}\left( \mathbb{R}^{n}\right) }^{2}\leq
C_{\gamma }\left\Vert \eta _{1}\Lambda Lu\right\Vert _{L^{2}\left( \mathbb{R}%
^{n}\right) }^{2}+C_{\gamma }\left\Vert u\right\Vert _{H^{0}\left( \mathbb{R}%
^{n}\right) }^{2},
\end{equation*}%
for a constant $C_{\gamma }$ depending on $\gamma $.
\end{proof}

\subsubsection{Removal of the smoothness assumption}

It remains to remove the smoothness assumption $u\in C^{2,\delta }\left(
U\right) $ in Lemma \ref{pre}, and to convert the above \emph{a priori}
estimate (\ref{pre hypo}) to the desired conclusion $\Lambda u\in H^{0}$ of
Theorem \ref{main_thm_Grushin}. For this we fix a strictly positive smooth
function $r\in C^{\infty }\left( \mathbb{R}^{n}\right) $ such that 
\begin{equation*}
r\left( \xi \right) \equiv \left\{ 
\begin{array}{ccc}
\left\vert \xi \right\vert ^{-1} & \text{\ for\ } & \left\vert \xi
\right\vert \geq 2 \\ 
1 & \text{\ for\ } & \left\vert \xi \right\vert \leq 1%
\end{array}%
\right. ,
\end{equation*}%
and we fix a large exponent $q$. For $\varepsilon >0$ small define a
mollified\ symbol 
\begin{eqnarray*}
\lambda _{\varepsilon }\left( x,\xi \right) &=&r_{\varepsilon }\left( \xi
\right)\cdot \lambda \left( x,\xi \right)=r\left( \varepsilon \xi \right)
^{q}\cdot \lambda \left( x,\xi \right) ; \\
\text{where }r_{\varepsilon }\left( \xi \right) &\equiv &r\left( \varepsilon
\xi \right) ^{q}.
\end{eqnarray*}%
where $\lambda \left( x,\xi \right) =\left\vert \xi \right\vert ^{\gamma
}e^{-N_{0}\left( \log \left\vert \xi \right\vert \right) \phi \left( x,\xi
\right) }$ for $\left\vert \xi \right\vert \geq e$ as in (\ref{def lambda}).
Let $\Lambda _{\varepsilon }=\limfunc{Op}\lambda _{\varepsilon }$. The
symbols $r_{\varepsilon }\left( \xi \right) $ satisfy 
\begin{equation}
\frac{\left\vert \partial _{\xi }^{\alpha }r_{\varepsilon }\right\vert }{%
r_{\varepsilon }}\leq C_{\alpha ,q}\left\vert \xi \right\vert ^{-\left\vert
\alpha \right\vert },\ \ \ \ \ \text{uniformly in }\varepsilon >0\text{ and }%
\xi \in \mathbb{R}^{n}.  \label{diff ineq}
\end{equation}

If $q$ is chosen sufficiently large relative to the order of the
distribution $u$, then $\Lambda _{\varepsilon }u\in C^{2}$ for all $%
\varepsilon >0$, and since $\Lambda _{\varepsilon }$ is elliptic of order $%
\gamma $ in a conic neighbourhood of $\left( x_{0},\xi _{0}\right) $, it
suffices to show that the $L^{2}$ norm of $\eta _{1}\Lambda _{\varepsilon }u$
remains uniformly bounded as $\varepsilon \searrow 0$. However, Lemma \ref%
{pre} fails to apply since we do not know that the distribution $u$ is a
function in $C^{2,\delta }\left( U\right) $, and we now work to circumvent
this difficulty.

The parameter $N_{0}$ in (\ref{def lambda}) can be chosen sufficiently large
that $\eta _{1}\Lambda Lu\in L^{2}$ because $\phi $ is strictly positive in
a conic neighbourhood of the $H^{\gamma }$ wave front set of $u$, and hence $%
\Lambda $ is regularizing there of order at least $\gamma -\sigma N_{0}$ for
some constant $\sigma >0$. The $L^{2}$ norm of $\eta _{1}\Lambda
_{\varepsilon }Lu$ is bounded uniformly in $\varepsilon >0$ and tends to the 
$L^{2}$ norm of $\eta _{1}\Lambda Lu$.

As in the proof of Lemma \ref{pre}, we have for each $\varepsilon >0$, an
operator $G_{\varepsilon }$ and an identity%
\begin{align*}
\left( L_{1}+G_{\varepsilon }\right) \eta _{1}\Lambda _{\varepsilon }u&
=\eta _{1}\Lambda _{\varepsilon }L_{1}u+R_{\varepsilon }u\ , \\
\left( L_{2}+E_{\varepsilon }\right) \eta _{1}\Lambda _{\varepsilon }u&
=\eta _{1}\Lambda _{\varepsilon }L_{2}u+R_{\varepsilon }u\ , \\
\left( R_{1}+J_{\varepsilon }\right) \eta _{1}\Lambda _{\varepsilon }u&
=\eta _{1}\Lambda _{\varepsilon }R_{1}u+R_{\varepsilon }u\ ,
\end{align*}
with both sides of the equation in $C^{2}$ for each $\varepsilon >0$.
Moreover, the differential inequalities (\ref{diff ineq}) ensure that the
proof of Lemma \ref{pre} carries through for each $\varepsilon >0$ with $%
\Lambda $ replaced by $\Lambda _{\varepsilon }$, \ so that $G_{\varepsilon }$
takes the form (\ref{form G}), i.e. 
\begin{eqnarray*}
G_{\varepsilon } &=&\sum_{j}B_{j,\varepsilon }\circ X_{j}+\sum_{j}X_{j}^{%
\func{tr}}\circ \widetilde{B_{j,\varepsilon }}+B_{0,\varepsilon }\ , \\
B_{0,\varepsilon } &\in &\limfunc{Op}\left( \mathcal{C}^{0,\delta }S_{1,\eta
}^{0,2}\right) \text{ and }B_{j,\varepsilon },\widetilde{B_{j,\varepsilon }}%
\in \limfunc{Op}\left( \mathcal{C}^{1,\delta }S_{1,\eta }^{0,1}\right) \ ,
\end{eqnarray*}%
where the pseudodifferential operator coefficients $B_{0,\varepsilon }$, $%
B_{j,\varepsilon }$ and $\widetilde{B_{j,\varepsilon }}$ lie uniformly in
the indicated operator classes. A similar argument holds for $%
E_{\varepsilon} $ and $J_{\varepsilon}$. All functions have sufficient \
differentiability for the proof of Lemma \ref{pre} to apply, and this proof,
together with the above identity, yield 
\begin{equation*}
\left\Vert \eta _{1}\Lambda _{\varepsilon }u\right\Vert _{L^{2}\left(
R\right) }\leq C\left\Vert \eta _{1}\Lambda _{\varepsilon }Lu\right\Vert
_{L^{2}\left( R\right) }+C\left\Vert u\right\Vert _{H^{0}\left( R\right) }\ ,
\end{equation*}%
uniformly in $\varepsilon >0$. We conclude as desired that the $L^{2}$ norm
of $\eta _{1}\Lambda _{\varepsilon }u$ remains bounded as $\varepsilon
\searrow 0$.

Thus we have proved that for any distribution $u\in \mathcal{D}^{\prime
}\left( V\right) $, and any $0<\gamma <\delta $, there is a symbol $\Lambda $
as in (\ref{def lambda}) that is elliptic of order $\gamma $ on the\ conical
set $\Gamma $, and satisfies 
\begin{equation*}
\left\Vert \eta _{1}\Lambda u\right\Vert _{L^{2}\left( R\right) }\leq
C\left\Vert \eta _{1}\Lambda Lu\right\Vert _{L^{2}\left( R\right)
}+C\left\Vert u\right\Vert _{H^{0}\left( R\right) }.
\end{equation*}%
The proof of Theorem \ref{main_thm_Grushin} is now complete.

Combined with the bootstrapping argument above, this shows that $u\in H_{%
\limfunc{loc}}^{s}\left( R\right) $ for all $s\in \mathbb{R}$. Indeed, $\eta
_{2}u\in H^{-M}\left( R\right) $ for some $M$ sufficiently large, and thus
we can begin the bootstrapping argument at $s=-M$.

\section{Proof of Theorem \protect\ref{HYPSOS G}}

We now prove Theorem \ref{HYPSOS G}. The first step is to use a
bootstrapping argument to reduce matters to the level of $L^{2}\left( 
\mathbb{R}^{n}\right) $. Consider the general second order divergence form
operator%
\begin{equation*}
Lu\left( x\right) \equiv \nabla ^{\func{tr}}A\left( x\right) \nabla u\left(
x\right) +D\left( x\right) u\left( x\right) ,
\end{equation*}%
where $A$ and $D$ are real and smooth, and where $A\left( x\right) $
satisfies appropriate form comparability conditions. In order to conclude
hypoellipticity of $L$ it is enough to show that there is $\gamma >0$ such
that for every $s\in \mathbb{R}$, we have the bootstrapping inequality 
\begin{equation*}
u\in H_{\func{loc}}^{s}\left( \mathbb{R}^{n}\right) \text{ and }Lu\in H_{%
\func{loc}}^{s+\gamma }\left( \mathbb{R}^{n}\right) \Longrightarrow u\in H_{%
\func{loc}}^{s+\gamma }\left( \mathbb{R}^{n}\right) \ \ \ \ \ \text{for all }%
s\in \mathbb{R}.
\end{equation*}%
Now with $\widehat{\Lambda _{s}}\left( \xi \right) \equiv \left(
1+\left\vert \xi \right\vert ^{2}\right) ^{\frac{s}{2}}$, and $\gamma >0$
fixed, it suffices to show%
\begin{equation*}
u\in H_{\func{loc}}^{0}\left( \mathbb{R}^{n}\right) \text{ and }\Lambda
_{s}L\Lambda _{-s}u\in H_{\func{loc}}^{\gamma }\left( \mathbb{R}^{n}\right)
\Longrightarrow u\in H_{\func{loc}}^{\gamma }\left( \mathbb{R}^{n}\right) \
\ \ \ \ \text{for all }s\in \mathbb{R}.
\end{equation*}%
For $s\geq 0$ we use 
\begin{equation*}
\Lambda _{s}L\Lambda _{-s}=\left( \Lambda _{s}L-L\Lambda _{s}\right) \Lambda
_{-s}+L=\left[ \Lambda _{s},L\right] \Lambda _{-s}+L,
\end{equation*}%
and for $s\leq 0$ we use%
\begin{equation*}
\Lambda _{s}L\Lambda _{-s}=-\Lambda _{s}\left( \Lambda _{-s}L-L\Lambda
_{-s}\right) +L=-\Lambda _{s}\left[ \Lambda _{-s},L\right] +L,
\end{equation*}%
to conclude that it suffices to prove%
\begin{eqnarray}
u &\in &H_{\func{loc}}^{0}\left( \mathbb{R}^{n}\right) \text{ and }\left[
\Lambda _{s},L\right] \Lambda _{-s}u\in H_{\func{loc}}^{\gamma }\left( 
\mathbb{R}^{n}\right) \Longrightarrow u\in H_{\func{loc}}^{\gamma }\left( 
\mathbb{R}^{n}\right) \ \ \ \ \ \text{for all }s\geq 0,  \label{suffices} \\
u &\in &H_{\func{loc}}^{0}\left( \mathbb{R}^{n}\right) \text{ and }\left[
\Lambda _{-s},L\right] \Lambda _{s}u\in H_{\func{loc}}^{\gamma }\left( 
\mathbb{R}^{n}\right) \Longrightarrow u\in H_{\func{loc}}^{\gamma }\left( 
\mathbb{R}^{n}\right) \ \ \ \ \ \text{for all }s\leq 0.  \notag
\end{eqnarray}

The second step is to use the sum of squares assumption in part (1) of
Theorem \ref{HYPSOS G} to show that it is sufficient to establish the
conditions of Theorem \ref{main_thm_Grushin}. So define 
\begin{equation}
\widetilde{G}\equiv \left[ \Lambda _{s},L\right] \Lambda _{-s}=\Lambda
_{s}L\Lambda _{-s}-L,  \label{G:tilde}
\end{equation}%
and suppose for the moment that the operator $L$ has the simple form 
\begin{equation}
L=\sum_{j}X_{j}^{\func{tr}}X_{j},  \label{L:simple}
\end{equation}%
where $L\in S_{1,0}^{2}$ is smooth and $X_{j}\in C^{2,\delta }$. We first
establish the properties of $\tilde{G}$ we need using the rough version of
asymptotic expansion from \cite{Saw} given in Theorem \ref{Saw calc} above,
which we repeat here for the reader's convenience.

Suppose $\sigma \in \mathcal{C}^{\nu }S_{1,\delta _{1}}^{m_{1}}$ and $\tau
\in \mathcal{C}^{M+\mu +\nu }S_{1,\delta _{2}}^{m_{2}}$ where $M$ is a
nonnegative integer, $0<\mu ,\delta _{1},\delta _{2}<1$, $\nu >0$ and $M+\mu
\geq m_{1}\geq 0$. Let $\delta \equiv \max \left\{ \delta _{1},\delta
_{2}\right\} $. Then 
\begin{eqnarray*}
\sigma \circ \tau &=&\sum_{\ell =0}^{M}\frac{1}{i^{\ell }\ell !}\nabla _{\xi
}^{\ell }\sigma \cdot \nabla _{x}^{\ell }\tau +E; \\
E &\in &\mathcal{O}_{\left( -\left( 1-\delta \right) \nu ,\nu \right)
}^{m_{1}+m_{2}+\left( M+\mu \right) \left( \delta _{2}-1\right) +\varepsilon
},\ \ \ \ \ \text{for every }\varepsilon >0.
\end{eqnarray*}

\begin{lemma}
\label{lem:conj} Let $L$ and $\widetilde{G}$ be as in (\ref{L:simple}) and (%
\ref{G:tilde}). Then 
\begin{eqnarray}
\widetilde{G} &=&\sum_{j}B_{j}\circ X_{j}+\sum_{j}X_{j}^{\func{tr}}\circ 
\widetilde{B_{j}}+B_{0}\ , \\
B_{0} &\in &\mathcal{O}_{\left( -\delta /2,\delta /2\right) }^{-\delta
/2+\varepsilon }\text{ for every }\varepsilon >0,\text{ and }B_{j},%
\widetilde{B_{j}}\in \limfunc{Op}\left( C^{1,\delta }S_{1,0}^{0}\right) \ . 
\notag
\end{eqnarray}
\end{lemma}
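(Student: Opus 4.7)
The argument parallels the proof of Lemma \ref{lem form G}, with the smooth Sobolev multiplier $\Lambda_{s} = (1+|\xi|^{2})^{s/2}$ replacing the variable-order $\Lambda$ used there. The crucial simplification is that $\lambda_{\pm s}$ is smooth and $x$-independent, so every term involving $\nabla_{x}\lambda_{\pm s}$ in the pseudodifferential expansions vanishes and no logarithmic symbols arise.

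I would first insert $\Lambda_{-s}\Lambda_{s}=I$ between $X_{j}^{\func{tr}}$ and $X_{j}$ in each summand of $L=\sum_{j}X_{j}^{\func{tr}}X_{j}$, rewriting
\[
\widetilde{G}=\Lambda_{s}L\Lambda_{-s}-L=\sum_{j}\bigl[(\Lambda_{s}X_{j}^{\func{tr}}\Lambda_{-s})(\Lambda_{s}X_{j}\Lambda_{-s})-X_{j}^{\func{tr}}X_{j}\bigr].
\]
Next I would apply Theorem \ref{Saw calc} to each conjugate. With $M=2$, $\mu=\delta/2$, $\nu=\delta/2$ (compatible with $\sigma(X_{j})\in\mathcal{C}^{2,\delta}S_{1,0}^{1}$ and with $M+\mu\geq s$ on the relevant range of $s$), the expansion yields
\[
\Lambda_{s}X_{j}\Lambda_{-s}=X_{j}+b_{j}+E_{j},\qquad b_{j}=\limfunc{Op}\!\left(\tfrac{1}{i}(\nabla_{\xi}\log\lambda_{s})\cdot\nabla_{x}\sigma(X_{j})\right),
\]
with $b_{j}\in\limfunc{Op}(\mathcal{C}^{1,\delta}S_{1,0}^{0})$: order $0$ since $\nabla_{\xi}\log\lambda_{s}\in S^{-1}$ pairs with $\nabla_{x}\sigma(X_{j})\in S^{1}$, and regularity $\mathcal{C}^{1,\delta}$ since one $x$-derivative of the $\mathcal{C}^{2,\delta}$ symbol survives. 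The error $E_{j}$ lies in $\mathcal{O}_{(-\delta/2,\delta/2)}^{-1-\delta/2+\varepsilon}$ for every $\varepsilon>0$. The same computation for $X_{j}^{\func{tr}}$ produces $\tilde{b}_{j}$ and $\tilde{E}_{j}$.

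Multiplying out and cancelling $X_{j}^{\func{tr}}X_{j}$ gives
\[
\widetilde{G}=\sum_{j}\bigl(X_{j}^{\func{tr}}b_{j}+\tilde{b}_{j}X_{j}\bigr)+B_{0},
\]
where $B_{0}$ collects the mixed terms $X_{j}^{\func{tr}}E_{j}$, $\tilde{E}_{j}X_{j}$, $\tilde{b}_{j}b_{j}$, $\tilde{b}_{j}E_{j}$, $\tilde{E}_{j}b_{j}$, and $\tilde{E}_{j}E_{j}$. Setting $B_{j}\equiv\tilde{b}_{j}$ and $\tilde{B}_{j}\equiv b_{j}$ realizes the claimed form with $B_{j},\tilde{B}_{j}\in\limfunc{Op}(\mathcal{C}^{1,\delta}S_{1,0}^{0})$.

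\textbf{Main obstacle.} The delicate part is verifying $B_{0}\in\mathcal{O}_{(-\delta/2,\delta/2)}^{-\delta/2+\varepsilon}$. The compositions $X_{j}^{\func{tr}}E_{j}$ and $\tilde{E}_{j}X_{j}$ have order $1+(-1-\delta/2+\varepsilon)=-\delta/2+\varepsilon$ and lie automatically in the desired class. The products $\tilde{b}_{j}b_{j}$ and the mixed $b$-$E$ terms need a further application of Theorem \ref{Saw calc}, together with Bourdaud's embedding $\mathcal{C}^{\nu}S_{1,0}^{m}\subset\overline{\mathcal{O}}_{(-\nu,\nu)}^{m}$, to extract the required smoothing. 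The finest bookkeeping is reserved for the $\ell=2$ contribution in the original expansion of $\Lambda_{s}X_{j}\Lambda_{-s}$, whose symbol carries only $\mathcal{C}^{0,\delta}$ regularity in $x$; here I would exploit the identity $\nabla_{x}^{2}\sigma(X_{j})=\nabla_{x}(\nabla_{x}\sigma(X_{j}))$ to perform a Leibniz-type reorganization so that each piece either factors through $X_{j}$ (and so is absorbed into $\tilde{b}_{j}$) or is genuinely of lower order. The smoothness of $\lambda_{\pm s}$ prevents any logarithmic accumulation, and tracking both the $\xi$-order and the Hölder regularity in $x$ through each composition --- while tedious --- is the single real technical point.
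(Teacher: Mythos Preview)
Your approach is correct in spirit and closely parallels the paper's, but the paper organizes the computation more economically and avoids the ``obstacle'' you flag. Two differences are worth pointing out.

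\textbf{Commutator versus conjugation.} Rather than conjugating each factor and multiplying out, the paper applies the Leibniz rule to the commutator directly:
\[
[\Lambda_{s},L]=\sum_{j}[\Lambda_{s},X_{j}^{\func{tr}}]X_{j}+X_{j}^{\func{tr}}[\Lambda_{s},X_{j}],
\]
so that $\widetilde{G}=[\Lambda_{s},L]\Lambda_{-s}$ already exhibits the two structural pieces. For the second summand one simply sets $\widetilde{B}_{j}\equiv[\Lambda_{s},X_{j}]\Lambda_{-s}$ modulo the calculus error, and then $X_{j}^{\func{tr}}\circ(\text{error})$ lands in $\mathcal{O}_{(-\delta/2,\delta/2)}^{-\delta/2+\varepsilon}$ by a single order count. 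No quadratic cross term $\tilde b_{j}b_{j}$ needs to be isolated and estimated separately, which is the main simplification over your multiply-out scheme.

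\textbf{The choice of $M$.} The paper invokes Theorem~\ref{Saw calc} with $M=1$, $\mu=1+\delta/2$, $\nu=\delta/2$, so the expansion terminates at $\ell=1$ and the $\ell=2$ contribution is absorbed \emph{directly} into the remainder $E\in\mathcal{O}_{(-\delta/2,\delta/2)}^{-1-\delta/2+\varepsilon}$. Your choice $M=2$ makes the $\ell=2$ term explicit as a symbol in $\mathcal{C}^{0,\delta}S_{1,0}^{-1}$; but then your stated inclusion $E_{j}\in\mathcal{O}^{-1-\delta/2+\varepsilon}$ is not what the theorem delivers---with $M=2$ the theorem's remainder has that order only \emph{after} the $\ell=2$ term is kept separately, and the latter is only order $-1$. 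The Leibniz reorganization you sketch is therefore attacking a difficulty of your own making: take $M=1$ and the $\ell=2$ term never surfaces.
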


\begin{proof}
First we note that 
\begin{equation*}
\lbrack \Lambda _{s},L]=\sum_{j}[\Lambda _{s},X_{j}^{\func{tr}}]X_{j}+X_{j}^{%
\func{tr}}[\Lambda _{s},X_{j}],
\end{equation*}%
and so we investigate operators $[\Lambda _{s},X_{j}^{\func{tr}}]$ and $%
[\Lambda _{s},X_{j}]$. The analysis is similar, so we only give details for $%
[\Lambda _{s},X_{j}]$. Using Theorem (\ref{Saw calc}) with $m_{1}=s$, $%
m_{2}=1$, $M=1$, $\mu =1+\delta /2$, $\nu =\delta /2$ and $\delta
_{1}=\delta _{2}=0$ we have 
\begin{equation*}
\sigma ([\Lambda _{s},X_{j}])=C\nabla _{\xi }\left( 1+\left\vert \xi
\right\vert ^{2}\right) ^{\frac{s}{2}}\cdot \nabla _{x}\sigma (X_{j})+E,
\end{equation*}%
where $E\in \mathcal{O}_{\left( -\delta /2,\delta /2\right) }^{1+s-(2+\delta
/2)+\varepsilon }$. Composing with $\Lambda _{-s}$ and using $\limfunc{Op}%
\left( \nabla _{\xi }\left( 1+\left\vert \xi \right\vert ^{2}\right) ^{\frac{%
s}{2}}\right) =R^{-1}\circ \Lambda _{s}$, where $R^{-1}\in S_{1,0}^{-1}$, we
obtain 
\begin{equation*}
X_{j}^{\func{tr}}[\Lambda _{s},X_{j}]\Lambda _{-s}=X_{j}^{\func{tr}}\circ 
\widetilde{B_{j}}+R
\end{equation*}%
with $\widetilde{B_{j}}\in C^{1,\delta }S_{1,0}^{0}$ and $R\in \mathcal{O}%
_{\left( -\delta /2,\delta /2\right) }^{-\delta /2+\varepsilon }$.
\end{proof}

Now we start with an operator $L\in S_{1,0}^{2}$ of the more general form 
\begin{equation}
L=\sum_{j}X_{j}^{\func{tr}}X_{j}+A_{0}+\widehat{\nabla }^{\func{tr}}\cdot 
\mathbf{Q}_{p}\left( x\right) \widehat{\nabla },  \label{L:full}
\end{equation}%
where $X_{j}\in C^{2,\delta }$ and $A_{0}\in S_{1,0}^{1}$. Using Lemma \ref%
{lem:conj} for any operator $L$ in the form (\ref{L:full}) and Remark \ref%
{Grush-conj} we can show that the operator $\Lambda _{s}L\Lambda _{-s}$ has
the form 
\begin{equation*}
\Lambda _{s}L\Lambda _{-s}=\sum_{j}X_{j}^{\func{tr}}X_{j}+%
\sum_{j}B_{j}X_{j}+\sum_{j}X_{j}^{\func{tr}}\tilde{B}_{j}+B_{0}+R_{1}+%
\widehat{\nabla }^{\func{tr}}\mathbf{Q}_{p}\left( x\right) \widehat{\nabla },
\end{equation*}%
where $X_{j}$, $B_{j}$, $\tilde{B}_{j}$, and $B_{0}$ are as in Lemma \ref%
{lem:conj} and $R_{1}$ is as in Theorem \ref{main_thm_Grushin}. Thus to show
hypoellipticity of the operator (\ref{L:full}), it is sufficient to show
that it satisfies the hypotheses of Theorem \ref{main_thm_Grushin}, which
completes the second step of the proof.

We prepare for the final step of the proof with an auxiliary Lemma (see \cite%
[Lemma 5.1]{Chr}), and its corollary to be used later for showing condition (%
\ref{Christ_cond1-G}).

\begin{lemma}
\label{lem:aux1} Let $\varphi \in C_{0}^{2}(\mathbb{R}^{n})$, $f\in
C^{\infty }(\mathbb{R}^{n})$ simply positive, and $s>0$. Then for any $l\in
\{1,\dots ,n\}$ there exists a constant $C_{l}$ independent of $s$ such that 
\begin{equation}
||\varphi ||^{2}\leq C_{l}\left( \frac{1}{\tau ^{2}[\min_{|x|\geq s}f(x)]^{2}%
}+s^{2}\right) \left( ||\partial _{x_{l}}\varphi ||^{2}+\int \tau
^{2}f(x)^{2}\varphi (x)^{2}dx\right) ,  \label{bound_aux}
\end{equation}%
where the minimum is taken over all $x\in \mathrm{supp}\varphi $ s.t. $%
|x|\geq s$.
\end{lemma}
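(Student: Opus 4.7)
The plan is to split the integral $\|\varphi\|^{2}=\int_{|x|\geq s}|\varphi|^{2}+\int_{|x|<s}|\varphi|^{2}$ and estimate each region using one of the two terms on the right of (\ref{bound_aux}). Writing $m\equiv \min_{|x|\geq s,\,x\in\mathrm{supp}\,\varphi}f(x)>0$, the outer region is immediate: since $f\geq m$ on the support there,
\[
\int_{|x|\geq s}|\varphi|^{2}\leq \frac{1}{m^{2}}\int f^{2}|\varphi|^{2}=\frac{1}{\tau^{2}m^{2}}\cdot\tau^{2}\!\int f^{2}|\varphi|^{2},
\]
which is controlled by the first coefficient $\tfrac{1}{\tau^{2}m^{2}}$ in (\ref{bound_aux}).

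For the inner region I would use a one-dimensional Poincar\'e-type argument via the fundamental theorem of calculus in the single variable $x_{l}$, transferring pointwise values from $\{|x|<s\}$ to the outer region $\{|y|\geq s\}$ along a segment of length $O(s)$. Concretely, for each $x=(x',x_{l})$ with $|x|<s$ and each $y_{l}\in[s,3s]$, write
\[
\varphi(x',x_{l})=\varphi(x',y_{l})-\int_{x_{l}}^{y_{l}}\partial_{x_{l}}\varphi(x',t)\,dt,
\]
and apply Cauchy--Schwarz together with $|y_{l}-x_{l}|\leq 4s$ to obtain
\[
|\varphi(x',x_{l})|^{2}\leq 2|\varphi(x',y_{l})|^{2}+8s\int_{\mathbb{R}}|\partial_{x_{l}}\varphi(x',t)|^{2}\,dt.
\]
Averaging over $y_{l}\in[s,3s]$ and noting that $|(x',y_{l})|\geq y_{l}\geq s$, so $f(x',y_{l})\geq m$ whenever $(x',y_{l})\in\mathrm{supp}\,\varphi$, then integrating over $\{|x|<s\}$ (whose $x_{l}$-slice has length at most $2s$), yields
\[
\int_{|x|<s}|\varphi|^{2}\leq \frac{2}{m^{2}}\int f^{2}|\varphi|^{2}+16s^{2}\|\partial_{x_{l}}\varphi\|^{2}.
\]
Summing with the outer estimate and using the trivial bound $a_{1}b_{1}+a_{2}b_{2}\leq (a_{1}+a_{2})(b_{1}+b_{2})$ for nonnegative quantities delivers (\ref{bound_aux}) with an absolute constant $C_{l}$ (one may take $C_{l}=16$).

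I do not anticipate a serious obstacle: the argument is elementary and the key observation is just that translating along the single direction $x_{l}$ suffices to cross from $\{|x|<s\}$ into $\{|x|\geq s\}$ over a segment whose length is comparable to $s$. The only care is tracking that $C_{l}$ depends on neither $s$, $\tau$, nor $f$, which is transparent from the calculation. The edge case $\mathrm{supp}\,\varphi\cap\{|x|\geq s\}=\emptyset$ needs no separate treatment, since the outer integral then vanishes and the FTC averaging above still gives the Poincar\'e bound $\int|\varphi|^{2}\leq 16 s^{2}\|\partial_{x_{l}}\varphi\|^{2}$ directly.
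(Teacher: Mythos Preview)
Your proof is correct and follows essentially the same two-step strategy as the paper: bound the outer region directly via the lower bound $f\geq m$, and control the inner region by a one-dimensional Poincar\'e argument in the $x_l$-direction using the fundamental theorem of calculus. The only differences are cosmetic: the paper splits on $|x_l|\leq s$ rather than $|x|\leq s$, and in place of your averaging over the endpoint $y_l\in[s,3s]$ it uses the multiplicative parametrization $t\mapsto t x_l$ together with a weighted Cauchy--Schwarz (weight $t^{3/4}$ against $t^{-3/2}$) and a change of variables --- your translation-plus-averaging is arguably the more transparent implementation of the same idea.
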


\begin{proof}
Fix $s>0$, for any $x\in \mathbb{R}^{n}$ we have 
\begin{align*}
\varphi (x)& =\varphi \left( x+s\frac{x_{l}}{|x_{l}|}\right)
-\int_{1}^{1+s/|x_{l}|}\frac{\partial \varphi }{\partial t}(x_{1},\dots
,x_{l-1},tx_{l},x_{l+1},\dots ,x_{n})dt \\
\varphi ^{2}(x)& \lesssim \varphi ^{2}\left( x+s\frac{x_{l}}{|x_{l}|}\right)
+\left( \int_{1}^{1+s/|x_{l}|}\nabla \varphi (x_{1},\dots
,x_{l-1},tx_{l},x_{l+1},\dots ,x_{n})\cdot (0,\dots ,x_{l},0,\dots
,0)dt\right) ^{2} \\
\int_{|x_{l}|\leq s}\varphi ^{2}(x)dx& \lesssim \int_{|x_{l}|\leq s}\varphi
^{2}\left( x+s\frac{x_{l}}{|x_{l}|}\right) dx \\
& \quad +\int_{|x_{l}|\leq s}\left( \int_{1}^{1+s/|x_{l}|}|\partial
_{l}\varphi (x_{1},\dots ,x_{l-1},tx_{l},x_{l+1},\dots
,x_{n})t^{3/4}x_{l}|^{2}dt\int_{1}^{1+s/|x_{l}|}t^{-3/2}dt\right) dx \\
\int_{|x_{l}|\leq s}\varphi ^{2}(x)dx& \lesssim \int_{s\leq |x_{l}|\leq
2s}\varphi ^{2}\left( x\right) dx+\int_{|x_{l}|\leq
s}\int_{1}^{1+s/|x_{l}|}|\partial _{l}\varphi (x_{1},\dots
,x_{l-1},tx_{l},x_{l+1},\dots ,x_{n})t^{3/4}x_{l}|^{2}dtdx.
\end{align*}%
Switching the order of integration in the last term on the right and making
a change of variables $y=(x_{1},\dots ,x_{l-1},tx_{l},x_{l+1},\dots ,x_{n})$
we obtain 
\begin{equation*}
\int_{|x_{l}|\leq s}\int_{1}^{1+s/|x_{l}|}|\partial _{l}\varphi (x_{1},\dots
,x_{l-1},tx_{l},x_{l+1},\dots ,x_{n})t^{3/4}x_{l}|^{2}dtdx\leq
\int_{1}^{\infty }\int_{|y_{l}|\leq 2s}|\partial _{l}\varphi
(y)y_{l}|^{2}t^{-1/2}\frac{dy}{t}dt\lesssim s^{2}\int |\partial _{l}\varphi
(y)|^{2}dy,
\end{equation*}%
which combining with the above gives 
\begin{equation*}
\int_{|x_{l}|\leq s}\varphi ^{2}(x)dx\lesssim \int_{s\leq |x|\leq 2s}\varphi
^{2}\left( x\right) dx+s^{2}\int |\partial _{l}\varphi (x)|^{2}dx.
\end{equation*}%
Finally, 
\begin{equation*}
\int_{|x_{l}|\geq s}\tau ^{2}f(x)^{2}\varphi (x)^{2}dx\geq \tau
^{2}[\min_{|x_{l}|\geq s}f(x)]^{2}\int_{|x_{l}|\geq s}\varphi ^{2}(x)dx,
\end{equation*}%
and thus altogether 
\begin{equation*}
\int \varphi ^{2}(x)dx\lesssim \frac{1}{\tau ^{2}[\min_{|x_{l}|\geq
s}f(x)]^{2}}\int_{|x|\geq s}\tau ^{2}f(x)^{2}\varphi (x)^{2}dx+s^{2}\int
|\partial _{l}\varphi (x)|^{2}dx
\end{equation*}%
which implies (\ref{bound_aux}).
\end{proof}

\begin{lemma}
\label{lem:cond1} Let $\varphi$ and $f$ as in Lemma \ref{lem:aux1}. There
exists a strictly positive continuous function $w$ satisfying $w(\tau)\to
\infty$ as $\tau\to\infty$ such that for every $l\in \{1,\dots,n\}$ and some
constant $C_{l}>0$ 
\begin{equation}
\int w(\tau)^{2}\varphi(x)^{2}dx\leq C_{l}\int\left(|\partial_{l}
\varphi(x)|^{2}+\tau^2f(x)^{2}\varphi(x)^{2}\right)dx.
\end{equation}
\end{lemma}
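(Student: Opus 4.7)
The strategy is to optimize the bound of Lemma \ref{lem:aux1} in the free parameter $s>0$, thereby converting the $(s,\tau)$-dependent coefficient on the right-hand side into a factor depending on $\tau$ alone that vanishes as $\tau\to\infty$; the reciprocal of this factor then furnishes the function $w(\tau)^2$.

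Fix a compact set $K$ containing $\mathrm{supp}\,\varphi$, and for $s>0$ set $m(s):=\inf\{f(x):x\in K,\,|x|\ge s\}$. Since $f$ is strictly positive away from the origin and $K$ is compact, $m(s)>0$ for every $s>0$ and $m$ is non-decreasing. Define
\[
\alpha(\tau):=\inf_{s>0}\left(\frac{1}{\tau^{2}m(s)^{2}}+s^{2}\right).
\]
The key claim is that $\alpha(\tau)\to 0$ as $\tau\to\infty$: given $\varepsilon>0$, choose $s_{0}>0$ with $s_{0}^{2}<\varepsilon/2$; since $m(s_{0})$ is then a fixed positive number, $1/(\tau^{2}m(s_{0})^{2})<\varepsilon/2$ for all sufficiently large $\tau$, and so the infimum is bounded above by $\varepsilon$.

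Now pick any strictly positive continuous function $w$ with $w(\tau)\to\infty$ and $w(\tau)^{2}\le 1/(2C_{l}\,\alpha(\tau))$ for all large $\tau$; this is possible because $1/\alpha(\tau)\to\infty$. Given $\tau$, select $s_{\tau}$ so that $1/(\tau^{2}m(s_{\tau})^{2})+s_{\tau}^{2}\le 2\alpha(\tau)$, and apply Lemma \ref{lem:aux1} with this choice of $s$ to get
\[
\int\varphi(x)^{2}\,dx\le 2C_{l}\,\alpha(\tau)\left(\int|\partial_{x_{l}}\varphi(x)|^{2}\,dx+\int\tau^{2}f(x)^{2}\varphi(x)^{2}\,dx\right).
\]
Multiplying through by $w(\tau)^{2}$ and using $2C_{l}w(\tau)^{2}\alpha(\tau)\le 1$ yields the stated inequality.

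The only mild subtlety is to ensure that $w$ is independent of $\varphi$: this is handled by fixing the compact set $K$ (e.g.\ the closure of the fixed open set $V$ appearing in the applications) once and for all at the outset, so that $m$, $\alpha$, and hence $w$ depend only on $f$ and $K$ and not on the particular $\varphi$. No genuine obstacle arises; the content is really just the simple monotonicity observation above together with an infimum-optimization over $s$.
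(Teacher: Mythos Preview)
Your proposal is correct and follows essentially the same approach as the paper: both arguments optimize the bound from Lemma~\ref{lem:aux1} over the free parameter $s$, observe that the resulting optimal coefficient tends to $0$ as $\tau\to\infty$, and take $w(\tau)^2$ to be comparable to its reciprocal. The paper writes this slightly more explicitly, setting $w(\tau)=\inf_{s>0}\bigl(1/s+\tau f_0(s)\bigr)$ and identifying the near-optimal $s=r(\tau)$ via the balance $1/r=\tau f_0(r)$, but this is just a repackaging of your $\alpha(\tau)$ (one small point: to cover all $\tau$ and not just large $\tau$, enforce $w(\tau)^2\le 1/(2C_l\,\alpha(\tau))$ for every $\tau>0$, which is harmless since $\alpha(\tau)\to\infty$ as $\tau\to0$).
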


\begin{proof}
For all $s\geq 0$ define 
\begin{equation*}
f_{0}(s)\equiv \min_{x\in \mathrm{supp}\varphi :|x|\geq s}f(x),
\end{equation*}%
and note that $f_{0}(0)=0$, $f_{0}(s)>0$ for $s\neq 0$, and $f_{0}$ is
nondecreasing on $[0,\infty )$. Let $r=r(\tau )>0$ be the unique point
satisfying 
\begin{equation}
\frac{1}{r}=\tau f_{0}(r).  \label{r}
\end{equation}%
Define the function $w$ by 
\begin{equation*}
w(\tau )=\inf_{0<s<\infty }\left( \frac{1}{s}+\tau f_{0}(s)\right) ,
\end{equation*}%
since $1/s$ is nonincreasing and $f_{0}(s)$ nondecreasing in $s$ we have $%
w(\tau )\approx 1/r$ where $r$ is given by (\ref{r}). Therefore, $w(\tau
)\rightarrow \infty $ as $\tau \rightarrow \infty $ and using (\ref%
{bound_aux}) with $s=r$ we obtain 
\begin{align*}
\int w(\tau )^{2}\varphi (x)^{2}dx& \leq C_{l}\frac{1}{r^{2}}\left( \frac{1}{%
\tau ^{2}f_{0}(r)^{2}}+r^{2}\right) \int \left( |\partial _{l}\varphi
(x)|^{2}+\tau ^{2}f(x)^{2}\varphi (x)^{2}\right) dx \\
& \leq C_{l}\int \left( |\partial _{l}\varphi (x)|^{2}+\tau
^{2}f(x)^{2}\varphi (x)^{2}\right) dx.
\end{align*}
\end{proof}

\subsection{Sufficiency}

We can now proceed to complete the sufficiency part of Theorem \ref{HYPSOS G}%
. We note that without loss of generality we may assume that the diagonal
entries $\lambda _{k}\left( \tilde{x}\right) $ are smooth. Indeed, from $%
A\left( x\right) \sim D_{\mathbf{\lambda }}\left( x\right) $ we obtain $%
A\left( x\right) \sim A_{\func{diag}}\left( x\right) $ and hence%
\begin{equation}
\lambda _{k}\left( \tilde{x}\right) \approx a_{k,k}\left( x\right) \approx
a_{k,k}\left( \tilde{x},0,0\right) ,  \label{lambda a}
\end{equation}%
where the functions $a_{k,k}\left( \tilde{x},0,0\right) $ are smooth for $%
1\leq k\leq n$ by assumption.

\begin{proof}[Proof of sufficiency in Theorem \protect\ref{HYPSOS G}]
Let $(\xi _{1},\dots ,\xi _{m},\eta _{m+1},\dots ,\eta _{n})$ denote the
dual variables, and denote $\xi =(\xi _{1},\dots ,\xi _{m})$, $\eta =(\eta
_{m+1},\dots ,\eta _{n})$, $\tilde{x}=(x_{1},\dots ,x_{m})$. Define 
\begin{equation*}
R=\{(x,\xi ,\eta ):x=0,\xi =0,\ \eta _{m+1},\dots ,\eta _{n}>0\}.
\end{equation*}%
%
%
%
%
%
%
%
%
%
%
%
%
%
%
%
%
The principal symbol of $L$ vanishes on the manifold $\tilde{x}=\xi =0$, so
it suffices to prove that $Lu\in H^{s}(\mathfrak{N}\left( R\right) )\implies
u\in H^{s}(\mathfrak{N}\left( R\right) )$ for some conical neighbourhood $%
\mathfrak{N}\left( R\right) $ of the ray $R$. We start with verifying
condition (\ref{Christ_cond1-G}). Let $\mathcal{F}(u)(\tilde{x},\eta )$ be
the partial Fourier transform of $u$ in $n-m$ variables $\eta $, then from
Lemma \ref{lem:cond1} with $x=\tilde{x}$ and $\varphi (\tilde{x})=\mathcal{F}%
(u)(\tilde{x},\eta )$, we have for $k=m+1,\dots ,p-1$ 
\begin{equation*}
\int w(\eta _{k})^{2}\mathcal{F}(u)(\tilde{x},\eta )^{2}d\tilde{x}\leq C\int
\left( |\nabla _{\tilde{x}}\mathcal{F}(u)(\tilde{x},\eta )|^{2}+\eta
_{k}^{2}\lambda _{k}(\tilde{x})\mathcal{F}(u)(\tilde{x},\eta )^{2}\right) d%
\tilde{x},
\end{equation*}%
and for $k=p,\dots ,n$ 
\begin{equation*}
\int w(\eta _{k})^{2}\mathcal{F}(u)(\tilde{x},\eta )^{2}d\tilde{x}\leq C\int
\left( |\nabla _{\tilde{x}}\mathcal{F}(u)(\tilde{x},\eta )|^{2}+\eta
_{k}^{2}\lambda _{p}(\tilde{x})\mathcal{F}(u)(\tilde{x},\eta )^{2}\right) d%
\tilde{x},
\end{equation*}%
where $w(s)\rightarrow \infty $ as $s\rightarrow \infty $. Adding the
inequalities together gives 
\begin{align*}
& \int w(|\eta |)^{2}\mathcal{F}(u)(\tilde{x},\eta )^{2}d\tilde{x} \\
& \qquad \leq C\int \left( |\nabla _{\tilde{x}}\mathcal{F}(u)(\tilde{x},\eta
)|^{2}+\left[ \sum_{k=m+1}^{p-1}\eta _{k}^{2}\lambda _{k}(\tilde{x}%
)+\sum_{k=p}^{n}\eta _{k}^{2}\lambda _{p}(\tilde{x})\right] \mathcal{F}(u)(%
\tilde{x},\eta )^{2}\right) d\tilde{x},
\end{align*}%
where $w(|\eta |)\rightarrow \infty $ as $|\eta |\rightarrow \infty $.
Combining with the first line in (\ref{add_cond_gen G}) we obtain 
\begin{equation*}
\int_{\mathbb{R}^{n}}w(|(\eta )|)^{2}\mathcal{F}(u)(\tilde{x},\eta )^{2}d%
\tilde{x}d\eta \leq C\sum_{j}\left\Vert X_{j}u\right\Vert ^{2}+C\left\Vert 
\sqrt{\lambda _{p}}\widehat{\nabla }u\right\Vert ^{2},
\end{equation*}%
which gives upon using the first condition in (\ref{add_cond_gen G}) again 
\begin{align*}
\int \min \{|(\xi ,\eta )|,w(|(\xi ,\eta )|)\}^{2}\hat{u}(\xi ,\eta
)^{2}d\xi d\eta & \lesssim \int_{|\xi |\leq |\eta |}w(|\eta |)^{2}\left\vert 
\hat{u}(\xi ,\eta )\right\vert ^{2}d\xi d\eta +\int_{|\xi |\geq |\eta |}|\xi
|^{2}\left\vert \hat{u}(\xi ,\eta )\right\vert ^{2}d\xi d\eta \\
& \leq C\sum_{j}||X_{j}u||^{2}+C\left\Vert \sqrt{\lambda _{p}}\widehat{%
\nabla }u\right\Vert ^{2}.
\end{align*}

We proceed to verify (\ref{superlog-G}) with $p\equiv 0$ and $\psi $
constructed below. Since the principal symbol of the operator vanishes on $%
\mathbb{R}^{n-m}\times \mathbb{R}^{n-m}$, namely when $\tilde{x}=\xi =0$, we
need to localize matters to a strip $\left\vert \left( \tilde{x},\frac{\xi }{%
\left\vert \eta \right\vert }\right) \right\vert <\rho $ where $\psi $
enjoys favorable commutation relations with the symbol $\sigma \left(
X_{j}\right) $ of the vector field $X_{j}$. So let $p\equiv 0$ and let $\rho
>0$. Let $\psi \in C^{\infty }(T^{\func{tr}}V)$ be homogeneous of degree $0$
with respect to $\left( \xi ,\eta \right) $ and satisfy 
\begin{equation*}
\begin{cases}
\psi =1, & \quad \text{if}\ \left\vert \left( x,\frac{\xi }{\left\vert \eta
\right\vert }\right) \right\vert \geq 3\rho \\ 
\psi =0, & \quad \text{if}\ \left\vert \left( x,\frac{\xi }{\left\vert \eta
\right\vert }\right) \right\vert \leq \rho \\ 
\psi =\psi (x_{m+1},\dots ,x_{n}), & \quad \text{if}\ \left\vert \left( 
\tilde{x},\frac{\xi }{\left\vert \eta \right\vert }\right) \right\vert \leq
2\rho%
\end{cases}%
.
\end{equation*}%
Thus $\psi $ is $1$ outside a large ball of radius $3\rho $, vanishes inside
a small ball of radius $\rho $, and makes the transition from $0$ to $1$ in
the strip while depending only on the variables $\tilde{x}$ in the strip $%
\left\vert \left( \tilde{x},\frac{\xi }{\left\vert \eta \right\vert }\right)
\right\vert \leq 2\rho $. In the strip $\left\vert \left( \tilde{x},\frac{%
\xi }{\left\vert \eta \right\vert }\right) \right\vert <\rho $, $\psi $ is a
function of variables $x_{m+1},\dots ,x_{n}$ only, and the main step of
Christ's application of his theorem occurs now: for each $j=1,\dots ,k$
there exist $a_{\ell }^{j}\left( \tilde{x}\right) $, $\ell =m+1,\dots ,n$
such that 
\begin{align*}
& \{\psi ,\sigma (X_{j})\}=i\sum_{\ell =m+1}^{n}a_{\ell }^{j}\left( \tilde{x}%
\right) \,\partial _{x_{l}}\psi , \\
\left\vert a_{\ell }^{j}\left( \tilde{x}\right) \right\vert & \lesssim \sqrt{%
\lambda _{\ell }\left( \tilde{x}\right) },\ \ell =m+1,\dots ,p-1,\ \
\left\vert a_{\ell }^{j}\left( \tilde{x}\right) \right\vert \lesssim \sqrt{%
\lambda _{p}(\tilde{x})},\ \ell =p,\dots ,n
\end{align*}%
using conditions (\ref{add_cond_gen G}), and 
\begin{equation*}
\{\psi ,\eta \}=i\hat{\nabla}\psi .
\end{equation*}%
Using the condition $\left\vert \xi \right\vert \leq \rho \left\vert \eta
\right\vert $ this gives for each $j=1,\dots ,N$ 
\begin{align*}
\left\Vert \limfunc{Op}\left[ \log \langle (\xi ,\eta )\rangle \{\psi
,\sigma (X_{j})\}\right] u\right\Vert ^{2}& \lesssim \sum_{\ell
=m+1}^{p-1}\left\Vert \limfunc{Op}\left[ \sqrt{\lambda _{\ell }(\tilde{x})}%
\log \langle \eta \rangle \right] u\right\Vert ^{2}+\left\Vert \limfunc{Op}%
\left[ \sqrt{\lambda _{p}\left( \tilde{x}\right) }\log \langle \eta \rangle %
\right] u\right\Vert ^{2} \\
& =\int \Lambda _{\func{sum}}\left( \tilde{x}\right) \log \langle \eta
\rangle ^{2}\mathcal{F}(u)(\tilde{x},\eta )^{2}d\tilde{x}d\eta ,
\end{align*}%
and 
\begin{align*}
\left\Vert \sqrt{\mathbf{Q}_{p}}\limfunc{Op}\left[ \log \langle (\xi ,\eta
)\rangle \{\psi ,\eta \}\right] u\right\Vert ^{2}& \lesssim \left\Vert \sqrt{%
\lambda _{p}}\limfunc{Op}\left[ \log \langle \eta \rangle \right]
u\right\Vert ^{2} \\
& \lesssim \int \Lambda _{\func{sum}}\left( \tilde{x}\right) \log \langle
\eta \rangle ^{2}\mathcal{F}(u)(\tilde{x},\eta )^{2}d\tilde{x}d\eta ,
\end{align*}%
upon using the definition of $\Lambda _{\func{sum}}\left( \tilde{x}\right) $%
. To show (\ref{superlog-G}) it is therefore sufficient to establish the
first inequality in the following display (since the second follows directly
from (\ref{add_cond_gen G})) 
\begin{align}
& \int \log \langle \eta \rangle ^{2}\Lambda _{\func{sum}}\left( \tilde{x}%
\right) \mathcal{F}(u)(\tilde{x},\eta )^{2}d\tilde{x}d\eta \lesssim \delta
\int \left\vert \nabla _{\tilde{x}}\mathcal{F}(u)(\tilde{x},\eta ,\tau
)\right\vert ^{2}d\tilde{x}d\eta d\tau  \notag \\
& \qquad +\delta \int \left[ \sum_{k=m+1}^{p-1}\eta _{k}^{2}\lambda _{k}(%
\tilde{x})+\sum_{k=p}^{n}\eta _{k}^{2}\lambda _{p}(\tilde{x})\right] 
\mathcal{F}(u)(\tilde{x},\eta )^{2}d\tilde{x}d\eta +C_{\delta }||u||^{2}
\label{bound_n'} \\
& \qquad \lesssim \delta \sum_{j=1}^{N}||X_{j}u||^{2}+\delta ||\sqrt{\lambda
_{p}}\hat{\nabla}u||^{2}+C_{\delta }||u||^{2}.  \notag
\end{align}%
Using the definitions of $\Lambda _{\func{sum}}\left( \tilde{x}\right) $ and 
$\Lambda _{\func{product}}\left( \tilde{x}\right) $ we conclude that it is
sufficient to show 
\begin{equation}
(\log \tau )^{2}||\sqrt{\Lambda _{\func{sum}}}\varphi ||^{2}\leq \delta
(\tau )||\nabla _{\tilde{x}}\varphi ||^{2}+\delta (\tau )\tau ^{2}||\sqrt{%
\Lambda _{\func{product}}}\varphi ||^{2},\quad \text{ for all}\ \ \varphi
\in C_{0}^{1}(\mathbb{R}^{m}),  \label{suffic3_nd}
\end{equation}%
where $\delta (\tau )\rightarrow 0$ as $\tau \rightarrow \infty $. Indeed, (%
\ref{suffic3_nd}) together with the bound $0\leq \lambda _{j}\leq 1$ implies 
\begin{align*}
& \int \log \langle \eta \rangle ^{2}\Lambda _{\func{sum}}\varphi (\tilde{x}%
)^{2}d\tilde{x}\leq \delta (\langle \eta \rangle )||\nabla _{\tilde{x}%
}\varphi ||^{2}+\delta (\langle \eta \rangle )\langle \eta \rangle ^{2}||%
\sqrt{\Lambda _{\func{product}}}\varphi ||^{2} \\
& \quad \leq \delta (\langle \eta \rangle )||\nabla _{\tilde{x}}\varphi
||^{2}+\delta (\langle \eta \rangle )\left[ \sum_{k=m+1}^{p-1}|\eta
_{k}|^{2}||\sqrt{\lambda _{k}}\varphi ||^{2}+\sum_{k=p}^{n}|\eta _{k}|^{2}||%
\sqrt{\lambda _{p}}\varphi ||^{2}+\right] +C_{\delta }||\varphi ||^{2}.
\end{align*}%
This implies (\ref{bound_n'}) by splitting the region of integration into $%
|\eta |$ sufficiently large so that $\delta (\langle \eta \rangle )\leq
\delta $, and the region where $|\eta |$ is bounded, and thus the left hand
side of (\ref{bound_n'}) is bounded by $C||u||^{2}$.

To establish (\ref{suffic3_nd}), we first recall for convenience the Koike
condition 
\begin{equation}
\lim_{\tilde{x}\rightarrow 0}\mu (|\tilde{x}|,\sqrt{\Lambda _{\func{sum}}}%
)\ln \Lambda _{\func{product}}(\tilde{x})=0.  \label{log assump_min}
\end{equation}%
Now let $\phi \in C_{0}^{1}(B(0,r))$. Then we then have with $\phi _{\tilde{y%
}}\left( \rho \right) \equiv \phi \left( \rho \tilde{y}\right) $,%
\begin{eqnarray}
&&\int_{|\tilde{x}|\leq r}\Lambda _{\func{sum}}\left( \tilde{x}\right) \phi (%
\tilde{x})^{2}d\tilde{x}=\int_{|\tilde{x}|\leq r}\Lambda _{\func{sum}}\left( 
\tilde{x}\right) (r-|\tilde{x}|)^{2}\frac{\phi (\tilde{x})^{2}}{(r-|\tilde{x}%
|)^{2}}d\tilde{x}  \label{claim} \\
&\leq &\mu \left( r,\sqrt{\Lambda _{\func{sum}}}\right)^{2} \int_{|\tilde{x}%
|\leq r}\frac{\phi (\tilde{x})^{2}}{(r-|\tilde{x}|)^{2}}d\tilde{x}=\mu
\left( r,\sqrt{\Lambda _{\func{sum}}}\right)^{2} \int_{\mathbb{S}%
^{m-1}}\left\{ \int_{0}^{r}\left( \frac{1}{r-\rho }\int_{\rho }^{r}\phi _{%
\tilde{y}}^{\prime }\left( \rho \right) \right) ^{2}\rho ^{m-1}d\rho
\right\} d\tilde{y}  \notag \\
&\leq &\mu \left( r,\sqrt{\Lambda _{\func{sum}}}\right)^{2} \int_{\mathbb{S}%
^{m-1}}\left\{ 4\int_{0}^{r}\phi _{\tilde{y}}^{\prime }\left( \rho \right)
^{2}\rho ^{m-1}d\rho \right\} d\tilde{y}\leq 4\mu \left( r,\sqrt{\Lambda _{%
\func{sum}}}\right)^{2} \int \left\vert \nabla _{\tilde{x}}\phi (\tilde{x}%
)\right\vert ^{2},  \notag
\end{eqnarray}%
where in the last line we have applied Hardy's inequality.

Fix $\varphi \in C_{0}^{1}(\mathbb{R}^{m})$ as in (\ref{suffic3_nd}). Let $%
\chi \in C_{0}^{1}(\mathbb{R}^{1})$ satisfy $\chi (t)=1$ for $\left\vert
t\right\vert \leq 1$ and $\chi (t)=0$ for $\left\vert t\right\vert \geq 2$,
and define the function 
\begin{equation}
\nu (\tilde{x})\equiv \chi (\tau \Lambda _{\func{product}}(\tilde{x})),
\label{def:nu'}
\end{equation}%
and the set 
\begin{equation*}
I(\tau )\equiv \{\tilde{x}\in \func{Supp}\varphi :\tau \Lambda _{\func{%
product}}(\tilde{x})>1\}.
\end{equation*}%
We can write 
\begin{equation}
\int \Lambda _{\func{sum}}\left( \tilde{x}\right) \varphi (\tilde{x})^{2}d%
\tilde{x}\leq 2\int \Lambda _{\func{sum}}\left( \tilde{x}\right) \nu (\tilde{%
x})^{2}\varphi (\tilde{x})^{2}d\tilde{x}+2\int \Lambda _{\func{sum}}\left( 
\tilde{x}\right) (1-\nu (\tilde{x}))^{2}\varphi (\tilde{x})^{2}d\tilde{x}.
\label{step1'}
\end{equation}%
To estimate the second integral we notice that it vanishes outside the set $%
I(\tau )$ and thus 
\begin{eqnarray}
(\log \tau )^{2}\int \Lambda _{\func{sum}}\left( \tilde{x}\right) (1-\nu (%
\tilde{x}))^{2}\varphi (\tilde{x})^{2}d\tilde{x} &\leq &(\log \tau
)^{2}\int_{I(\tau )}\Lambda _{\func{product}}(\tilde{x})\varphi (\tilde{x}%
)^{2}d\tilde{x}  \label{step2'} \\
&=&\delta \left( \tau \right) \tau ^{2}\int \Lambda _{\func{product}}(\tilde{%
x})\varphi (\tilde{x})^{2}d\tilde{x},  \notag
\end{eqnarray}%
where $\delta \left( \tau \right) =(\log \tau )^{2}\tau ^{-1}\rightarrow 0$
as $\tau \rightarrow \infty $.

To estimate the first integral on the right hand side of (\ref{step1'}) we
define 
\begin{equation*}
r(\tau )\equiv \sup \{|\tilde{y}|:\tilde{y}\in \func{Supp}\varphi :\tau
\Lambda _{\func{product}}(\tilde{y})\leq 2\}.
\end{equation*}%
Since $\func{Supp}\varphi $ is compact, the supremum above is attained at
some point $\tilde{z}\in \func{Supp}\varphi $, and moreover we have both%
\begin{equation*}
\left\vert \tilde{z}\right\vert =r\text{ and }\tau =\frac{2}{\Lambda _{\func{%
product}}\left( \tilde{z}\right) }.
\end{equation*}%
Thus $\ln \tau \approx \ln \frac{1}{\Lambda _{\func{product}}\left( \tilde{z}%
\right) }$ and so%
\begin{equation*}
\mu \left( r\left( \tau \right) ,\sqrt{\Lambda _{\func{sum}}}\right) \ln
r\left( \tau \right) \approx \mu \left( \left\vert \tilde{z}\right\vert ,%
\sqrt{\Lambda _{\func{sum}}}\right) \ln \frac{1}{\Lambda _{\func{product}%
}\left( \tilde{z}\right) }.
\end{equation*}

The Koike condition condition (\ref{log assump_min}) now implies 
\begin{equation}
\lim_{\tau \rightarrow \infty }\mu (r\left( \tau \right) ,\sqrt{\Lambda _{%
\func{sum}}})\ln r\left( \tau \right) =\lim_{\tilde{x}\rightarrow 0}\mu (|%
\tilde{x}|,\sqrt{\Lambda _{\func{sum}}})\ln \frac{1}{\Lambda _{\func{product}%
}(\tilde{x})}=0,  \label{g_lim'}
\end{equation}%
since $r\left( \tau \right) \rightarrow 0$ as $\tau \rightarrow \infty $. We
now need to combine this result with (\ref{claim}) to obtain the desired
estimate. Let $\phi (\tilde{x})=\nu (\tilde{x})\varphi (\tilde{x})$. Then
using the definition of $\nu (\tilde{x})$ in (\ref{def:nu'}) we obtain 
\begin{align*}
\int |\nabla _{\tilde{x}}\phi (\tilde{x})|^{2}d\tilde{x}& \leq C\int |\nabla
_{\tilde{x}}\nu (\tilde{x})|^{2}\varphi (\tilde{x})^{2}d\tilde{x}+C\int \nu (%
\tilde{x})^{2}|\nabla _{\tilde{x}}\varphi (\tilde{x})|^{2}d\tilde{x} \\
& \leq C\tau ^{2}\int_{I(\tau )}|\nabla _{\tilde{x}}\Lambda _{\func{product}%
}(\tilde{x})|^{2}\varphi (\tilde{x})^{2}d\tilde{x}+C\int |\nabla _{\tilde{x}%
}\varphi (\tilde{x})|^{2}d\tilde{x} \\
& \leq C\tau ^{2}\int_{I(\tau )}\Lambda _{\func{product}}(\tilde{x})\varphi (%
\tilde{x})^{2}d\tilde{x}+C\int |\nabla _{\tilde{x}}\varphi (\tilde{x})|^{2}d%
\tilde{x},
\end{align*}%
where in the last inequality we used the Malgrange inequality, see e.g. \cite%
[Lemme I]{Gla}, applied to $\Lambda _{\func{product}}(\tilde{x}%
)=\dprod\limits_{k=m+1}^{p}\lambda _{k}\left( \tilde{x}\right) $, where the
functions $\lambda _{k}$ are smooth by (\ref{lambda a}). Finally, from the
definition of $r$ and (\ref{def:nu'}) it follows that%
\begin{equation*}
\func{Supp}\phi \subset \func{Supp}\nu\subset \left\{ \tilde{y}:\tau <\frac{2%
}{\Lambda _{\func{product}}(\tilde{y})}\right\} \subset B\left( 0,r\left(
\tau \right) \right) .
\end{equation*}%
since if $\left\vert \tilde{x}\right\vert >r\left( \tau \right) $, then $%
\tau \Lambda _{\func{product}}(\tilde{y})>2$ by the definition of $r\left(
\tau \right) $.

Combining the above estimate with (\ref{g_lim'}) and (\ref{claim}) we
conclude that 
\begin{eqnarray*}
(\log \tau )^{2}\int \Lambda _{\func{sum}}(\tilde{x})\nu (\tilde{x}%
)^{2}\varphi (\tilde{x})^{2}d\tilde{x} &=&(\log \tau )^{2}\int \Lambda _{%
\func{sum}}(\tilde{x})\phi (\tilde{x})^{2}d\tilde{x} \\
&\leq &\delta (\tau )\left( \tau ^{2}\int \Lambda _{\func{product}}(\tilde{x}%
)\varphi (\tilde{x})^{2}d\tilde{x}+\int |\nabla _{\tilde{x}}\varphi (\tilde{x%
})|^{2}d\tilde{x}\right)
\end{eqnarray*}%
with $\delta (\tau )=C\mu (r,\sqrt{\Lambda _{\func{sum}}})^{2}(\log \tau
)^{2}\rightarrow 0$ as $\tau \rightarrow \infty $. Together with (\ref%
{step2'}) this gives (\ref{suffic3_nd}).
\end{proof}

\subsection{Sharpness}

We now turn to the sharpness portion of Theorem {\ref{HYPSOS G}}. If the
Koike condition (\ref{log assump_min}) fails, then 
\begin{eqnarray*}
0 &<&\lim \sup_{\tilde{x}\rightarrow 0}\mu \left( \left\vert \tilde{x}%
\right\vert ,\sqrt{\Lambda _{\func{sum}}}\right) \ln \frac{1}{\Lambda _{%
\func{product}}(\tilde{x})} \\
&=&\lim \sup_{\tilde{x}\rightarrow 0}\mu \left( \left\vert \tilde{x}%
\right\vert ,\sqrt{\sum_{k=m+1}^{p}\lambda _{k}\left( \tilde{x}\right) }%
\right) \ln \dprod\limits_{k=m+1}^{p}\frac{1}{\lambda _{k}\left( \tilde{x}%
\right) } \\
&\leq &\lim \sup_{\tilde{x}\rightarrow 0}\mu \left( \left\vert \tilde{x}%
\right\vert ,\sum_{k=m+1}^{p}\sqrt{\lambda _{k}\left( \tilde{x}\right) }%
\right) \sum_{j=m+1}^{p}\ln \frac{1}{\lambda _{j}\left( \tilde{x}\right) } \\
&\leq &\sum_{k,j=m+1}^{p}\lim \sup_{\tilde{x}\rightarrow 0}\mu \left(
\left\vert \tilde{x}\right\vert ,\sqrt{\lambda _{k}\left( \tilde{x}\right) }%
\right) \ln \frac{1}{\lambda _{j}\left( \tilde{x}\right) }
\end{eqnarray*}%
shows that $p>m+1$ (since $\lim \sup_{\tilde{x}\rightarrow 0}\mu \left(
\left\vert \tilde{x}\right\vert ,\sqrt{\lambda _{p}\left( \tilde{x}\right) }%
\right) \ln \frac{1}{\lambda _{p}\left( \tilde{x}\right) }=0$) and that
there is a pair of distinct indices $k,j\in \left\{ m+1,...,p\right\} $ such
that 
\begin{equation*}
\lim \sup_{\tilde{x}\rightarrow 0}\mu \left( \left\vert \tilde{x}\right\vert
,\sqrt{\lambda _{k}\left( \tilde{x}\right) }\right) \ln \frac{1}{\lambda
_{j}\left( \tilde{x}\right) }>0.
\end{equation*}%
Our sharpness assertion in Theorem {\ref{HYPSOS G} now follows immediately
from Proposition \ref{L1 L2} and Theorem \ref{strongly mon} below.}

{To prove the Proposition and Theorem, }we will need the following lemma
(see Hoshiro \cite[(2.7)]{Hos}), whose short proof we include here for the
reader's convenience.

\begin{lemma}[T. Hoshiro \protect\cite{Hos}]
\label{lem:Hosh}Let $L$ be a hypoelliptic operator on $\mathbb{R}^{n}$. For
any multiindex $\beta $ and any subsets $\Omega ,\Omega ^{\prime }$ of $%
\mathbb{R}^{n}$ such that $\Omega ^{\prime }\Subset \Omega $, there exists $%
N\in \mathbb{N}$ and $C>0$ such that 
\begin{equation}
||D^{\beta }u||_{L^{2}(\Omega ^{\prime })}^{2}\leq C\left( \sum_{|\alpha
|\leq N}||D^{\alpha }Lu||_{L^{2}(\Omega )}^{2}+||u||_{L^{2}(\Omega
)}^{2}\right) \quad \forall u\in C^{\infty }(\overline{\Omega }).
\label{hypo_bound}
\end{equation}
\end{lemma}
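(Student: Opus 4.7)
The plan is to prove this as a standard application of the closed graph theorem, exploiting hypoellipticity to make the target derivative operator well-defined.

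First I would fix $\Omega'\Subset\Omega$ and introduce the Fr\'echet space
\[
X\equiv \{u\in L^{2}(\Omega):D^{\alpha}Lu\in L^{2}(\Omega)\text{ for every multiindex }\alpha\},
\]
equipped with the countable family of seminorms
\[
p_{N}(u)^{2}\equiv \|u\|_{L^{2}(\Omega)}^{2}+\sum_{|\alpha|\leq N}\|D^{\alpha}Lu\|_{L^{2}(\Omega)}^{2},\qquad N\in\mathbb{N}.
\]
A routine verification shows $X$ is a Fr\'echet space. Moreover, any $u\in X$ satisfies $Lu\in C^{\infty}(\Omega)$ by Sobolev embedding applied locally in $\Omega$, so by hypoellipticity of $L$ we have $u\in C^{\infty}(\Omega)$. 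In particular $D^{\beta}u|_{\Omega'}\in L^{2}(\Omega')$ is a well-defined element for every multiindex $\beta$, since $\overline{\Omega'}\subset\Omega$ is compact.

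Next I would consider the linear map
\[
T:X\longrightarrow L^{2}(\Omega'),\qquad Tu\equiv D^{\beta}u\big|_{\Omega'},
\]
and verify that $T$ has closed graph. If $u_{n}\to u$ in $X$ and $Tu_{n}\to v$ in $L^{2}(\Omega')$, then $u_{n}\to u$ in $L^{2}(\Omega)$, hence in $\mathcal{D}'(\Omega)$, so $D^{\beta}u_{n}\to D^{\beta}u$ in $\mathcal{D}'(\Omega')$. Since we also have $D^{\beta}u_{n}=Tu_{n}\to v$ in $L^{2}(\Omega')\hookrightarrow\mathcal{D}'(\Omega')$, uniqueness of distributional limits gives $Tu=D^{\beta}u=v$, proving the graph is closed.

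The closed graph theorem for Fr\'echet spaces then yields continuity of $T$, which produces an integer $N$ and a constant $C>0$ with
\[
\|Tu\|_{L^{2}(\Omega')}\leq Cp_{N}(u),\qquad u\in X,
\]
and squaring gives exactly \eqref{hypo_bound}. Finally, since every $u\in C^{\infty}(\overline{\Omega})$ satisfies $Lu\in C^{\infty}(\overline{\Omega})$ and hence belongs to $X$, the estimate specializes to the stated class. There is no real obstacle here beyond setting up the correct Fr\'echet topology; the only subtlety is recognizing that hypoellipticity is precisely what is needed to turn $D^{\beta}$ into a genuine map on $X$, after which functional analysis does the rest.
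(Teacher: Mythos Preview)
Your proof is correct and follows essentially the same closed graph argument as the paper: both define a Fr\'echet space via the seminorms $\|u\|_{L^{2}}$ and $\|D^{\alpha}Lu\|_{L^{2}}$, use hypoellipticity to see that elements are smooth, and then apply the closed graph theorem to a derivative-evaluation map. The only cosmetic differences are that the paper works over $\Omega'$ rather than $\Omega$ in defining the space and takes the target to be $C^{M}(\Omega')$ instead of $L^{2}(\Omega')$; your choice of target $L^{2}(\Omega')$ and your distributional-limit verification of the closed graph are if anything slightly cleaner.
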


\begin{proof}
Fix $\Omega^{\prime }\Subset \Omega$ and consider the set 
\begin{equation*}
S\equiv \{u\in L^{2}\left( \Omega ^{\prime }\right) :D^{\alpha }Lu\in
L^{2}(\Omega ^{\prime })\ \text{for all multiindices}\ \alpha \}.
\end{equation*}%
The family of seminorms $||u||_{L^{2}(\Omega ^{\prime })},\ ||D^{\alpha
}Lu||_{L^{2}(\Omega ^{\prime })},\ |\alpha |\in \mathbb{N}$, makes it a Fr%
\'{e}chet space. Since $L$ is hypoelliptic we have $S\subset C^{\infty
}(\Omega ^{\prime })$, and in particular $S\subset C^{M}(\Omega ^{\prime })$
for any $M>0$. Now consider the inclusion map 
\begin{equation*}
T:S\rightarrow C^{M}(\Omega ^{\prime }),
\end{equation*}%
we claim $T$ is closed. Indeed, suppose $\{u_{n}\}\subset S$ satisfies $%
u_{n}\rightarrow u$ in $S$ and $u_{n}\rightarrow v$ in $C^{M}(\Omega
^{\prime })$, in particular, $u_{n}\rightarrow u$ in $L^{2}(\Omega ^{\prime
})$ and $u_{n}\rightarrow v$ in $L^{\infty }(\Omega ^{\prime })$. Then for
any $n\in \mathbb{N}$ 
\begin{equation*}
||u-v||_{L^{2}(\Omega ^{\prime })}\leq ||u-u_{n}||_{L^{2}(\Omega ^{\prime
})}+||u_{n}-v||_{L^{2}(\Omega ^{\prime })}\leq ||u-u_{n}||_{L^{2}(\Omega
^{\prime })}+||u_{n}-v||_{L^{\infty }(\Omega ^{\prime })}\left\vert \Omega
^{\prime }\right\vert ^{\frac{1}{2}}\rightarrow 0\ \text{as}\ n\rightarrow
\infty .
\end{equation*}%
This implies $u=v$, i.e. $T$ is closed. By the closed graph theorem $T$ is
continuous, and therefore there exists $N\in\mathbb{N}$ and $C>0$ such that 
\begin{equation*}
||u||_{C^{M}(\Omega^{\prime })}\leq C\left(\sum_{|\alpha|\leq
N}||D^{\alpha}Lu||_{L^{2}(\Omega^{\prime })}^{2}+||u||_{L^{2}(\Omega^{\prime
})}^{2}\right).
\end{equation*}
Since the choice of $M$ was arbitrary, this implies (\ref{hypo_bound}).
\end{proof}

\begin{proposition}
\label{L1 L2}Fix distinct indices $k,j\in \left\{ m+1,...,p\right\} $ where $%
p>m+1$. Define 
\begin{align*}
L_{1}& \equiv \frac{\partial ^{2}}{\partial x_{1}^{2}}+\dots +\frac{\partial
^{2}}{\partial x_{m}^{2}}+\lambda _{k}\left( x_{1},\dots ,x_{m}\right) \frac{%
\partial ^{2}}{\partial x_{k}^{2}}+\lambda _{j}(x_{1},\dots ,x_{m})\frac{%
\partial ^{2}}{\partial x_{j}^{2}}, \\
L_{2}& \equiv \frac{\partial ^{2}}{\partial x_{1}^{2}}+\dots +\frac{\partial
^{2}}{\partial x_{m}^{2}}+\sum_{i=m+1}^{p}\lambda _{i}\left( x_{1},\dots
,x_{m}\right) \frac{\partial ^{2}}{\partial x_{i}^{2}}+\sum_{i=p+1}^{n}%
\lambda _{p}\left( x_{1},\dots ,x_{m}\right) \frac{\partial ^{2}}{\partial
x_{i}^{2}}.
\end{align*}%
If $L_{1}$ is not hypoelliptic in $\mathbb{R}^{m+2}$, then $L_{2}$ is not
hypoelliptic in $\mathbb{R}^{n}$.
\end{proposition}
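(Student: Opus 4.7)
\medskip

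The plan is to pull back a witness of non-hypoellipticity for $L_{1}$ in $\mathbb{R}^{m+2}$ to one for $L_{2}$ in $\mathbb{R}^{n}$. The key structural observation is that the coefficients of both $L_{1}$ and $L_{2}$ depend only on $\tilde{x}=(x_{1},\ldots,x_{m})$, and that every additional second-order term in $L_{2}$ (beyond those in $L_{1}$) involves $\partial_{x_{\ell}}^{2}$ for some $\ell\in\{m+1,\ldots,n\}\setminus\{k,j\}$. Hence such a term will annihilate any distribution that is independent of the coordinate $x_{\ell}$, which is exactly what a pullback under the natural projection produces.

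Concretely, I would proceed as follows. Let $\pi:\mathbb{R}^{n}\to \mathbb{R}^{m+2}$ be the linear submersion $x\mapsto (\tilde{x},x_{k},x_{j})$. By the assumed non-hypoellipticity of $L_{1}$, there exist an open set $U\subset\mathbb{R}^{m+2}$ and a distribution $w\in\mathcal{D}'(U)$ with $L_{1}w\in C^{\infty}(U)$ but $w$ singular at some point $q^{*}\in U$. Set $V:=\pi^{-1}(U)\subset\mathbb{R}^{n}$ and define $v:=\pi^{*}w\in \mathcal{D}'(V)$ by
\begin{equation*}
\langle v,\psi\rangle := \Big\langle w,\ (\tilde{x},x_{k},x_{j})\mapsto \int \psi(\tilde{x},x_{m+1},\ldots,x_{n}) \prod_{\ell\in\{m+1,\ldots,n\}\setminus\{k,j\}} dx_{\ell}\Big\rangle,\quad \psi\in C_{c}^{\infty}(V);
\end{equation*}
this is legitimate because $\pi$ is a submersion and the inner integral lies in $C_{c}^{\infty}(U)$. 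Since $v$ is literally constant in the coordinates $x_{\ell}$ for $\ell\in\{m+1,\ldots,n\}\setminus\{k,j\}$, every such derivative vanishes distributionally, so all the terms $\lambda_{i}(\tilde{x})\partial_{x_{i}}^{2}$ with $i\in\{m+1,\ldots,p\}\setminus\{k,j\}$ and $\lambda_{p}(\tilde{x})\partial_{x_{i}}^{2}$ with $i\in\{p+1,\ldots,n\}$ annihilate $v$. The surviving terms of $L_{2}v$ reproduce $L_{1}w$ pulled back, giving $L_{2}v=\pi^{*}(L_{1}w)$ in $\mathcal{D}'(V)$.

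The conclusion is then immediate: $L_{2}v=\pi^{*}(L_{1}w)$ is smooth on $V$ because $L_{1}w$ is smooth on $U$ and $\pi$ is a submersion, while $v$ is singular at every point of $V$ lying above $q^{*}$ (the singular support of $\pi^{*}w$ is $\pi^{-1}(\operatorname{sing\,supp} w)$). Hence $L_{2}$ fails to be hypoelliptic on the open set $V\subset\mathbb{R}^{n}$, a fortiori on $\mathbb{R}^{n}$.

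The main (and essentially only) obstacle is bookkeeping: one must verify that pullback under $\pi$ commutes with the action of $L_{2}$ in the distributional sense, which reduces to the elementary facts that the coefficients $\lambda_{i}(\tilde{x})$ factor through $\pi$ and that $\partial_{x_{\ell}}v=0$ for every ``extra'' index $\ell$. No quantitative analysis is required, and in particular Hoshiro's Lemma~\ref{lem:Hosh} is not needed here (though one could alternatively argue via the contrapositive, inserting functions of the form $\tilde{u}(\tilde{x},x_{k},x_{j})\chi(x_{\ell}:\ell\notin\{k,j\})$ into Hoshiro's a priori estimate for $L_{2}$ to derive the analogous estimate for $L_{1}$; the direct pullback is cleaner).
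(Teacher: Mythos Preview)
Your proof is correct and follows essentially the same approach as the paper: both take a non-smooth solution $u$ of $L_{1}u\in C^{\infty}$ and pull it back along the projection $\pi:(x_1,\ldots,x_n)\mapsto(\tilde{x},x_k,x_j)$ to obtain a non-smooth $v=\pi^*u$ with $L_{2}v=\pi^*(L_{1}u)\in C^{\infty}$, using that the extra second-order terms in $L_{2}$ annihilate $v$. Your version is in fact more careful than the paper's, which speaks loosely of a ``function'' rather than a distribution and omits the verification that $\operatorname{sing\,supp}(\pi^*u)=\pi^{-1}(\operatorname{sing\,supp}u)$.
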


\begin{proof}
Suppose $L_{1}$ is not hypoelliptic in $\mathbb{R}^{m+2}$, i.e. there exists
a non smooth function $u=u(x_{1},\dots ,x_{m},x_{k},x_{j})$ such that $%
L_{1}u\in C^{\infty }(\mathbb{R}^{m+2})$. If we define the function $v$ by 
\begin{equation*}
v(x_{1},\dots ,x_{n})=u(x_{1},\dots ,x_{m},x_{k},x_{j}),
\end{equation*}%
then $v$ is not smooth since $u$ is not smooth. However, 
\begin{equation*}
L_{2}v(x_{1},\dots ,x_{n})=L_{1}u(x_{1},\dots ,x_{m},x_{k},x_{j})
\end{equation*}%
and is therefore smooth in $\mathbb{R}^{n}$.
\end{proof}

\begin{theorem}
\label{strongly mon}Suppose that $h,f\in C^{\infty }\left( \mathbb{R}%
^{m}\right) $ are \emph{strongly monotone}, i.e. 
\begin{equation*}
f(z)\leq f(x)\ \text{and}\ h(z)\leq h(x)\ \ \text{for all}\ z\in B\left(
0,|x|\right) ,
\end{equation*}%
and satisfy $h(x),\ f\left( x\right) \geq 0$ and $h(0)=f\left( 0\right) =0$
for all $x\in \mathbb{R}^{m}$. Define 
\begin{equation}
\mu (t,h)\equiv \max \{h(z)(t-|z|):0\leq |z|\leq t\}.
\end{equation}%
and suppose in addition that 
\begin{equation}
\liminf_{x\rightarrow 0}\mu (|x|,h)\ln f(x)\neq 0.  \label{decay}
\end{equation}%
Then the operator 
\begin{equation*}
\mathcal{L}\equiv \Delta _{x}+f^{2}\left( x\right) \frac{\partial ^{2}}{%
\partial y^{2}}+h^{2}(x)\frac{\partial ^{2}}{\partial t^{2}}
\end{equation*}%
\textbf{fails} to be $C^{\infty }$-hypoelliptic in $\mathbb{R}^{m+2}$.
\end{theorem}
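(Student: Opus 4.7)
The plan is to argue by contradiction via the Hoshiro-quasimode scheme that underlies the necessity direction of Kusuoka--Stroock and Christ. Assume $\mathcal{L}$ is $C^{\infty}$-hypoelliptic on a neighbourhood of the origin in $\mathbb{R}^{m+2}$. By Lemma~\ref{lem:Hosh}, for every precompact $\Omega' \Subset \Omega$ and every multiindex $\beta$ there exist $N \in \mathbb{N}$ and $C > 0$ with
$$\|D^\beta u\|_{L^2(\Omega')}^2 \leq C\sum_{|\alpha|\leq N}\|D^\alpha \mathcal{L} u\|_{L^2(\Omega)}^2 + C\|u\|_{L^2(\Omega)}^2, \qquad u \in C^\infty(\overline\Omega).$$
The goal is to feed a carefully chosen family of quasimodes into this estimate and derive a contradiction from \eqref{decay}.

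First I would set up the quasimode family. For parameters $\xi,\eta > 0$, let $E(\xi,\eta)$ and $\phi_{\xi,\eta}$ denote the lowest Dirichlet eigenvalue and associated nonnegative normalized ground state on a fixed small ball of $\mathbb{R}^m$ of the Schr\"odinger operator
$$H_{\xi,\eta} \equiv -\Delta_x + \xi^2 f(x)^2 + \eta^2 h(x)^2.$$
Fixing a cutoff $\rho \in C_c^\infty(\mathbb{R})$, the test functions $u_{\xi,\eta}(x,y,t) \equiv \phi_{\xi,\eta}(x)\,e^{i\xi y + i\eta t}\,\rho(y)\rho(t)$ satisfy $\mathcal{L}u_{\xi,\eta} = -E(\xi,\eta)\,u_{\xi,\eta}$ modulo harmless commutators with $\rho$. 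Inserting these into Hoshiro's inequality with $D^\beta$ of sufficiently high order in $\partial_y,\partial_t$ and a routine polynomial book-keeping in $(\xi,\eta)$ forces a lower bound of the form
$$E(\xi,\eta) \gtrsim \log\bigl(1 + |\xi| + |\eta|\bigr) \qquad \text{as } |\xi|+|\eta|\to\infty.$$

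Second I would produce a competing upper bound on $E$ from the failure of the Koike limit. Hypothesis \eqref{decay} together with strong monotonicity of $f$ and $h$ yields a sequence $x_k \to 0$ with $\mu$-maximizers $z_k$ satisfying $|z_k|\leq |x_k|$, $r_k \equiv |x_k|-|z_k|$, and $r_k h(z_k)\,|\ln f(x_k)| \geq c_0 > 0$. A radial bump $\psi_k \in C_c^\infty$ supported in the shell $\{|z_k|\leq |w|\leq |x_k|\}$ with $\|\nabla\psi_k\|_{L^\infty}\lesssim r_k^{-1}$ then gives, by strong monotonicity ($f(w)\leq f(x_k)$ and $h(w)\leq h(x_k)$ on the shell), the Rayleigh bound
$$E(\xi,\eta)\leq \frac{C}{r_k^2} + \xi^2 f(x_k)^2 + \eta^2 h(x_k)^2.$$
A choice of $(\xi_k,\eta_k)\to\infty$ tied to $\xi_k \sim 1/f(x_k)$ (so that $\log\xi_k \sim |\ln f(x_k)|$), with $\eta_k$ optimized to keep the right-hand side of order $r_k^{-2}$, then collides with the logarithmic lower bound of the previous paragraph to produce the inequality $r_k^2|\ln f(x_k)|\lesssim 1$. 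Combined with $r_k h(z_k)\,|\ln f(x_k)|\geq c_0$, this is designed to contradict \eqref{decay} as $k\to\infty$.

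The main obstacle is the final balancing of $\xi_k$ and $\eta_k$: the hypothesis controls $r_k h(z_k)$ at the \emph{inner} edge of the shell, whereas the Rayleigh quotient naturally produces $h(x_k)$ at the \emph{outer} edge, and strong monotonicity only supplies $h(z_k)\leq h(x_k)$ (the unhelpful direction for a direct comparison). Converting the Koike gap into a genuine contradiction requires either a dyadic family of nested shells or a more refined optimization of $\eta_k$ that exploits the definition of $\mu$ directly, following the Kusuoka--Stroock--Christ one-dimensional template. Proposition~\ref{L1 L2} has already reduced the general sharpness claim of Theorem~\ref{HYPSOS G} to exactly this two-degenerate-direction setup, so the argument is carried out for a single pair $(f,h)$ at a time, which is what makes the shell-based construction viable in general dimension $m$.
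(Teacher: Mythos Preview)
Your scheme has two genuine gaps, and they are linked.

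First, the claimed logarithmic lower bound $E(\xi,\eta)\gtrsim\log(1+|\xi|+|\eta|)$ does not follow from Hoshiro's inequality by ``routine polynomial book-keeping.'' With $\mathcal{L}u_{\xi,\eta}=-E\,u_{\xi,\eta}$ (modulo commutators), Lemma~\ref{lem:Hosh} gives $(\xi+\eta)^{2|\beta|}\lesssim E^{2}(\xi+\eta)^{2N}+1$, but $N$ depends on $\beta$ and you have no control over $N-|\beta|$; nothing forces $E$ to be unbounded, let alone logarithmically so. The paper sidesteps this entirely by producing \emph{exact} null solutions $\mathcal{L}u_{n}=0$, so that the right-hand side of Hoshiro reduces to $C\|u_{n}\|^{2}$ with no $E$-dependence at all.

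Second, the monotonicity obstacle you correctly flag is not a technicality to be patched by dyadic shells---it is a signal that your eigenvalue problem is the wrong one. The paper uses the \emph{weighted} problem
\[
-\Delta_{x}v+\eta^{2}f^{2}(x)\,v=\lambda\,h^{2}(x)\,v\quad\text{on }B(0,1),\ v|_{\partial B}=0,
\]
whose Rayleigh quotient carries $h^{2}$ in the \emph{denominator}. Strong monotonicity then gives the useful direction $h(x)\geq h(a_{n})$ on the shell $\{|a_{n}|\leq|x|\leq|b_{n}|\}$, and together with $\eta_{n}f(x)\leq1$ there (taking $\eta_{n}=1/f(b_{n})$) one obtains $\lambda_{0}(1,\eta_{n})\leq C(\ln\eta_{n})^{2}$. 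The point of this formulation is that the corresponding separated solution is
\[
u_{n}(x,y,t)=e^{i\eta_{n}y}\,e^{\sqrt{\lambda_{0}}\,t}\,v_{0}(x,\eta_{n}),
\]
with a \emph{real} exponential in $t$: then $h^{2}\partial_{t}^{2}u_{n}=+\lambda_{0}h^{2}u_{n}$ exactly cancels the eigenvalue term, so $\mathcal{L}u_{n}=0$. The logarithmic bound on $\lambda_{0}$ makes $e^{\sqrt{\lambda_{0}}\,\delta}\leq\eta_{n}^{C\delta}$ only \emph{polynomial} in $\eta_{n}$, whence $\|u_{n}\|_{L^{2}(V)}^{2}\lesssim\eta_{n}^{2\sqrt{C_{1}}\delta}$ while $\|\partial_{y}^{k}u_{n}\|_{L^{2}(V')}^{2}\gtrsim\eta_{n}^{2k}$; choosing $k>\sqrt{C_{1}}\delta$ contradicts Lemma~\ref{lem:Hosh}. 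Your oscillatory ansatz $e^{i\eta t}$ forces $h^{2}$ into the potential and destroys both the exact-solution property and the favourable monotonicity direction simultaneously.
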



\begin{proof}
For $a,\eta >0$ consider the second order operator $L_{\eta }\equiv
-\Delta_{x}+f^{2}\left( x\right) \eta ^{2}$ and the eigenvalue problem 
\begin{eqnarray*}
&L_{\eta }v\left( x,\eta \right) =\lambda \ h^{2}(x)v\left( x,\eta \right)
,&\ \ \ \ \ x\in B(0,a) , \\
&v(x)=0 ,&\ \ \ \ \ x\in \partial B(0,a).
\end{eqnarray*}
The least eigenvalue is given by the Rayleigh quotient formula 
\begin{eqnarray}
\lambda _{0}\left( a,\eta \right) &=&\inf_{\varphi\left( \neq 0\right) \in
C_{0}^{\infty }\left(B\right) }\frac{\left\langle L_{\eta
}\varphi,\varphi\right\rangle _{L^{2}}}{\left\langle
h^{2}\varphi,\varphi\right\rangle _{L^{2}}}  \notag \\
&=&\inf_{\varphi\left( \neq 0\right) \in C_{0}^{\infty }\left( B\right) }%
\frac{ \int_{B}\left\vert \nabla\varphi\right\vert\left( x\right)
^{2}dx+\int_{B}f^{2}\left( x\right) \eta ^{2}\varphi\left( x\right) ^{2}dx}{%
\int_{B}h(x)^{2}\varphi(x)^{2}dx}.  \label{Rayleigh}
\end{eqnarray}
Next, from (\ref{decay}) it follows that there exists $\varepsilon>0$ and
sequences $\{a_n\},\ \{b_n\}\subset\mathbb{R}^{m}$ s.t. $|a_n|<|b_n|\leq 1$, 
$b_n\to 0$, and 
\begin{equation}  \label{decay_impl}
h(a_n)\left(|b_n|-|a_n|\right)|\ln f(b_n)|\geq \varepsilon,\ \quad\forall
n\in\mathbb{N}.
\end{equation}
Let 
\begin{equation*}
\eta_n=\frac{1}{f(b_n)}\to\infty\quad\text{as}\ \ n\to\infty,
\end{equation*}

By strong monotonicity of $f$ and $h$ we have 
\begin{equation*}
\eta_{n}f(x)\leq 1,\ \ h(x)\geq h(a_n)\ \ \forall x\in R_{n}\equiv\{x\in 
\mathbb{R}^{m}: |a_n|\leq |x|\leq |b_n|\}.
\end{equation*}
This implies using (\ref{Rayleigh}) 
\begin{align*}
\lambda _{0}\left( |b_{n}|,\eta_{n} \right)&\leq \inf_{\varphi\left( \neq
0\right) \in C_{0}^{\infty }\left( R_{n}\right) }\frac{\left\langle
L_{\eta_{n} }\varphi,\varphi\right\rangle _{L^{2}}}{\left\langle
h^{2}\varphi,\varphi\right\rangle _{L^{2}}} \\
&\leq h(a_n)^{-2}\inf_{\varphi\left( \neq 0\right) \in C_{0}^{\infty }\left(
R_{n}\right)
}\{(\left\Vert\nabla\varphi\right\Vert^{2}+\left\Vert\varphi\right%
\Vert^{2})/\left\Vert\varphi\right\Vert^{2}\} \\
&\leq h(a_n)^{-2} (C(|b_n|-|a_n|)^{-2}+1)\leq C|\ln f(b _n)|^{2}=
C(\ln\eta_n)^{2},
\end{align*}
where we used (\ref{decay_impl}) and the definition of $\eta_n$ for the last
two inequalities. It also follows from (\ref{Rayleigh}) and the fact that $%
|b_n|\leq 1$ 
\begin{equation}  \label{lambda bound}
\lambda _{0}\left( 1,\eta_{n} \right) \leq \lambda _{0}\left(
|b_{n}|,\eta_{n}\right)\leq C_{1}(\ln\eta_n)^{2}.
\end{equation}
Now let $v_{0}\left( x,\eta_{n} \right) $ be an eigenfunction on the ball $%
B=B(0,1) $ associated with $\lambda _{0}\left( 1,\eta_{n} \right) $ i.e. 
\begin{equation*}
-\Delta v_{0}\left( x,\eta_{n}\right) =\left[ \lambda _{0}\left(
1,\eta_{n}\right)h^{2}(x) -f^{2}\left( x\right) \eta_{n}^{2}\right]
v_{0}\left( x,n\right) ,
\end{equation*}
and normalized so that 
\begin{equation}
\left\Vert v_{0}\left( \cdot ,\eta_{n} \right) \right\Vert _{L^{2}\left(
B\right) }=1.  \label{normalized}
\end{equation}
We first claim that 
\begin{equation}  \label{norm2}
\left\Vert v_{0}\left( \cdot ,\eta_{n} \right) \right\Vert _{L^{2}\left(
(1/2)B\right) }\to 1\quad\text{as}\ n\to\infty.
\end{equation}
Indeed, we have 
\begin{align*}
&\inf_{1/2<|x|<1}f^{2}(x)\eta_{n}^{2}\int_{1/2<|x|<1}|v_{0}\left( x
,\eta_{n} \right)|^{2}dx \\
&\quad\leq \int_{B}f^{2}(x)\eta_{n}^{2}|v_{0}\left( x ,\eta_{n}
\right)|^{2}dx \\
&\quad\leq \int_{B}|\nabla v_{0}\left( x ,\eta_{n}
\right)|^{2}dx+\int_{B}f^{2}(x)\eta_{n}^{2}|v_{0}\left( x ,\eta_{n}
\right)|^{2}dx \\
&\quad=\lambda_{0}\left( 1,\eta_{n}\right)\int_{B}h^{2}(x)|v_{0}\left( x
,\eta_{n} \right)|^{2}dx \leq C \lambda_{0}\left( 1,\eta_{n}\right).
\end{align*}
Dividing both sides by $\inf_{1/2<|x|<1}f^{2}(x)\eta_{n}^{2}$ and using (\ref%
{lambda bound}) we obtain that 
\begin{equation*}
\int_{1/2<|x|<1}|v_{0}\left( x ,\eta_{n} \right)|^{2}dx\to 0\quad\text{as}%
\quad n\to \infty,
\end{equation*}
which implies (\ref{norm2}).

Define a sequence of functions 
\begin{equation*}
u_{n}(x,y,t)=e^{iy\eta _{n}+\sqrt{\lambda _{0}\left( 1,\eta _{n}\right) }%
t}v_{0}\left( x,\eta _{n}\right) .
\end{equation*}%
Then 
\begin{equation*}
\mathcal{L}u_{n}=\left( \Delta v_{0}\left( x,\eta _{n}\right) -\eta
_{n}^{2}f^{2}(x)v_{0}\left( x,\eta _{n}\right) +\lambda _{0}\left( 1,\eta
_{n}\right) v_{0}\left( x,\eta _{n}\right) \right) e^{iy\eta _{n}+\sqrt{%
\lambda _{0}\left( 1,\eta _{n}\right) }t}=0.
\end{equation*}%
Now, let $V=B(0,1)\times \lbrack -\pi ,\pi ]\times \lbrack -\delta ,\delta ]$
and $V^{\prime }=B(0,1/2)\times \lbrack -\pi /2,\pi /2]\times \lbrack
-\delta /2,\delta /2]$ for some $\delta >0$. We have using (\ref{norm2}) 
\begin{equation*}
||\partial _{y}^{k}u_{n}||_{L^{2}(V^{\prime })}^{2}=\eta
_{n}^{2k}||u_{n}||_{L^{2}(V^{\prime })}^{2}\geq \pi \eta
_{n}^{2k}\int_{1/2B}\int_{0}^{\delta /2}e^{2\sqrt{\lambda _{0}\left( 1,\eta
_{n}\right) }t}|v_{0}\left( x,\eta _{n}\right) |^{2}dtdx\geq C\eta _{n}^{2k},
\end{equation*}%
where the constant $C$ is independent of $k$ and $n$. On the other hand,
using (\ref{lambda bound}) 
\begin{equation*}
||u_{n}||_{L^{2}(V)}^{2}\leq Ce^{2\sqrt{\lambda _{0}\left( 1,\eta
_{n}\right) }\delta }\leq C\eta _{n}^{2\sqrt{C_{1}}\delta }.
\end{equation*}%
Since $\eta _{n}\rightarrow \infty $ as $n\rightarrow \infty $, these two
inequalities contradict (\ref{hypo_bound}) for $k>\sqrt{C_{1}}\delta $, and
thus by Lemma \ref{lem:Hosh}\ the operator $\mathcal{L}$ is not hypoelliptic.
\end{proof}

\section{Proof of Theorem \protect\ref{HYP G}}

Finally, we prove Theorem \ref{HYP G} by showing that the requirements of
Theorem \ref{HYPSOS G} are satisified. Let $L$ be as in (\ref{L form' G}).
We apply Theorem \ref{final n Grushin} to obtain $\mathbf{A}%
=\sum_{j=1}^{N}Y_{j}Y_{j}^{\func{tr}}+A_{p}$, and write the second order
term in $L$ as 
\begin{equation*}
\nabla ^{\limfunc{tr}}\mathbf{A}\nabla =\sum_{j=1}^{N}\nabla ^{\limfunc{tr}%
}Y_{j}Y_{j}^{\func{tr}}\nabla =\sum_{j=1}^{N}X_{j}^{\func{tr}}X_{j}+\widehat{%
\nabla }^{\func{tr}}\mathbf{Q}_{p}\widehat{\nabla }\ ,\ \ \ \ \ \text{where }%
X_{j}=Y_{j}^{\func{tr}}\nabla ,
\end{equation*}%
and then note that condition (\ref{log assump_n' G}) is satisfied by the
assumption (\ref{star}) of Theorem \ref{HYP G}. Moreover, condition (\ref%
{add_cond_gen G}) follows from (\ref{more}).

\section{Open problems}

\subsection{First problem}

In Theorem \ref{HYP G} we have shown that the Koike condition is sufficient
for the hypoellipticity of an operator $L$ with $n\times n$ matrix $A\left(
x\right) $ satisfying certain conditions on both its diagonal and
nondiagonal entries. However, in the converse direction we only showed that
failure of the Koike condition implies failure of hypoellipticity if in
addition $L$\ is diagonal with strongly monotone entries. In fact the proof
shows that we need only assume in addition that $A\left( x\right) $ has the
block form%
\begin{equation*}
A\left( x\right) =\left[ 
\begin{array}{ccccc}
\left[ 
\begin{array}{ccc}
a_{1,1}\left( x\right) & \cdots & a_{n,1}\left( x\right) \\ 
\vdots & \ddots & \vdots \\ 
a_{1,n}\left( x\right) & \cdots & a_{m,m}\left( x\right)%
\end{array}%
\right] & \mathbf{0}_{m\times 1} & \mathbf{0}_{m\times 1} & \cdots & \mathbf{%
0}_{m\times 1} \\ 
\mathbf{0}_{1\times m} & a_{m+1,m+1}\left( x\right) & 0 & \cdots & 0 \\ 
\mathbf{0}_{1\times m} & 0 & a_{m+2,m+2}\left( x\right) &  & 0 \\ 
\vdots & \vdots & \vdots & \ddots & \vdots \\ 
\mathbf{0}_{1\times m} & 0 & 0 & \cdots & a_{n,n}\left( x\right)%
\end{array}%
\right] .
\end{equation*}%
where just $a_{m+1,m+1}\left( x\right) $ and $a_{n,n}\left( x\right) $ are
assumed to be strongly monotone and satisfy (\ref{decay}).

\begin{problem}
Is the Koike condition actually necessary and sufficient for hypoellipticity
under the assumptions of Theorem \ref{HYP G}, \textbf{without} assuming the
above block form for $A\left( x\right) $?
\end{problem}

\subsection{Second problem}

Recall that the main theorem in \cite{KoRi} extends Kohn's theorem in \cite%
{Koh} to apply with finitely many blocks instead of the two blocks used in 
\cite{Koh}. These operators are restricted by being of a certain block form,
but they are more general in that the elliptic blocks are multiplied by
smooth functions that are positive outside the origin, and have more
variables than in our theorems, and furthermore that need not be finite sums
of squares of regular functions.

\begin{problem}
Can Theorem \ref{HYPSOS G} be extended to more general operators that
include the operators appearing in \cite{KoRi}?
\end{problem}

\subsection{Third problem}

What sort of smooth lower order terms of the form $B\left( x\right) \nabla $
and $\nabla ^{\func{tr}}C\left( x\right) $ can we add to the operator $L$ in
the main Theorem \ref{HYP G}? The natural hypothesis to make on the vector
fields $B\left( x\right) \nabla $ and $C\left( x\right) \nabla $ is that
they are subunit with respect to $\nabla ^{\func{tr}}A\left( x\right) \nabla 
$. However, if we use Theorem \ref{HYPSOS G} in the proof, we require more,
namely that $B\left( x\right) \nabla $ and $C\left( x\right) \nabla $ are
linear combinations, with $C^{2,\delta }$ coefficients, of the $C^{2,\delta
} $ vector fields $X_{j}\left( x\right) $ arising in the sum of squares
Theorem \ref{final n Grushin}, something which seems difficult to arrange
more generally.

\end{document}